\definecolor{marin}{rgb}   {0.,   0.1,   0.5} 
\definecolor{rouge}{rgb}   {0.8,   0.,   0.} 
\definecolor{sepia}{rgb}   {0.4,   0.25,   0.} 
\definecolor{mag}{rgb}   {0.3,   0,   0.3} 
\newtheorem{theorem}{Theorem}[section]
\newtheorem{corollary}[theorem]{Corollary}
\newtheorem{lemma}[theorem]{Lemma}
\newtheorem{proposition}[theorem]{Proposition}
\newtheorem{definition}[theorem]{Definition}
\newtheorem{remark}[theorem]{Remark}
\begin{document}

\title[Dynamics of nonlinear Klein-Gordon equations on $\mathbb{S}^2$
]{Dynamics of nonlinear Klein-Gordon equations\\ in low regularity on $\mathbb{S}^2$}

\author{Joackim Bernier}

\address{\small{Laboratoire de Math\'ematiques Jean Leray, Universit\'e de Nantes, UMR CNRS 6629\\
2 rue de la Houssini\`ere \\
44322 Nantes Cedex 03, France
}}

\email{joackim.bernier@univ-nantes.fr}

\author{Beno\^it Gr\'ebert}

\address{\small{Laboratoire de Math\'ematiques Jean Leray, Universit\'e de Nantes, UMR CNRS 6629\\
2 rue de la Houssini\`ere \\
44322 Nantes Cedex 03, France
}}

\email{benoit.grebert@univ-nantes.fr}

\author{Gabriel Rivi\`ere}

\address{\small{Laboratoire de Math\'ematiques Jean Leray, Universit\'e de Nantes, UMR CNRS 6629\\
2 rue de la Houssini\`ere \\
44322 Nantes Cedex 03, France and Institut Universitaire de France, Paris, France}}

\email{gabriel.riviere@univ-nantes.fr}

\keywords{Birkhoff normal forms, low regularity, Hamiltonian PDE, Klein--Gordon, random Hilbertian basis}

\subjclass[2010]{ 35Q40, 35Q75, 37K45, 37K55 }

\begin{abstract} We describe the long time behavior of small non-smooth solutions to the nonlinear Klein-Gordon equations on the sphere $\mathbb{S}^2$.
More precisely, we prove that the low harmonic energies (also called super-actions) are almost preserved for times of order $\varepsilon^{-r}$, where $r \gg 1$ is an arbitrarily large number and $\varepsilon \ll 1$ is the norm of the initial datum in the energy space $H^1\times L^2$. Roughly speaking, it means that, in order to exchange energy, modes have to oscillate at the same frequency. The proof relies on new multilinear estimates on Hamiltonian vector fields to put the system in Birkhoff normal form. They are derived from new probabilistic bounds on products of Laplace eigenfunctions that we obtain using Levy's concentration inequality.
\end{abstract} 
\maketitle

\setcounter{tocdepth}{1} 
\tableofcontents

\section{Introduction}

The linear Klein-Gordon equation classically appears as a natural first candidate to describe a relativistic version of quantum mechanics~\cite[Ch.~1]{BjDr64} and it can be written on the sphere as
$$
 \partial_t^2 \Phi(t,x) = \Delta \Phi(t,x) - \mu \Phi(t,x)
$$
where $\mu >0$ is an external parameter referred as the \emph{mass}\footnote{Physically speaking, $\mu$ is rather the square of the mass, up to taking $c=1$ and $\hbar=1$.}, $x\in \mathbb{S}^2$ (the Euclidean unit sphere of $\mathbb{R}^3$), $t\in \mathbb{R}$, $\Phi(t,x) \in \mathbb{R}$ and $\Delta$ denotes the Laplace--Beltrami operator on the sphere. As usual, we rewrite this evolution equation as a first order system
$$
\partial_t \begin{pmatrix} \Phi \\ \partial_t \Phi  \end{pmatrix} = \begin{pmatrix} 0& 1  \\
\Delta - \mu & 0 \end{pmatrix} \begin{pmatrix} \Phi \\ \partial_t \Phi  \end{pmatrix}
$$
and the change of variable 
\begin{equation}
\label{eq:defu}
u := (  \mu - \Delta)^{1/4} \Phi+ i (\mu- \Delta)^{-1/4} \partial_t\Phi
\end{equation}
makes the linear Klein--Gordon equation diagonal
$$
i \partial_t u = \sqrt{\mu - \Delta} \, u.
$$
Indeed, it is well known that the spherical harmonics (i.e. the restriction to $\mathbb{S}^2$ of homogeneous harmonic polynomials on $\mathbb{R}^3$) make the Laplace--Beltrami operator diagonal :
\begin{equation}
\label{eq:lapdiag}
L^2(\mathbb{S}^2 ; \mathbb{R}) = \overline{\bigoplus_{\ell \in \mathbb{N}} E_\ell } \quad \mathrm{where} \quad E_\ell =  \mathrm{Ker}(\Delta + \ell (\ell +1) \mathrm{Id}_{L^2})\simeq\mathbb{R}^{2\ell+1}
\end{equation}
is the space of spherical harmonics of degree $\ell$. In other words, the linear Klein--Gordon equation rewrites
$$
\forall \ell \in \mathbb{N}, \quad i \partial_t \Pi_\ell u = \omega_\ell \Pi_\ell u \quad \mathrm{where} \quad \omega_\ell := \sqrt{\ell (\ell+1) + \mu}
$$
and $\Pi_\ell $ denotes the orthogonal projector on $E_\ell$. 

On the one hand, it is relevant to note that the following quantities are constants of motion for the linear Klein--Gordon equation
$$
I_v(u(t)) = \Big| \int_{\mathbb{S}^2} u(t,x) v(x) \mathrm{d}\mathrm{vol}_{\mathbb{S}^2}(x) \Big|^2 \quad \mathrm{with} \quad \ell \in \mathbb{N}, v\in E_\ell .
$$ 
Actually, they describe accurately its dynamics (up to the exact values of the frequencies $\omega_\ell$). However, they are too sharp to survive to perturbations of the linear Klein--Gordon equation. Indeed, due to the multiplicities of the eigenvalues of the Laplace--Beltrami operator ($E_\ell$ is of dimension $2\ell+1$),  one could design spectral perturbations commuting with its vector field but destroying completely these constants of the motion (and so a fortiori we also expect the same phenomenon in the nonlinear case as in \cite{GV,GT12}). 

On the other hand, the harmonic energies (also called \emph{super-actions}) 
$$
J_{\ell}(u(t)) := \| \Pi_\ell u(t) \|_{L^2}^2 =: \mathcal{E}_\ell(\Phi(t),\partial_t \Phi(t)) 
$$
are much more robust constants of motion  because they do not describe the energy exchanges inside the clusters $E_\ell$. They only encode the energy preservation of each cluster. Note that they rewrite (in the original variables $(\Phi,\partial_t \Phi)$) as
\begin{equation}
\label{eq:def_harm_en}
\mathcal{E}_\ell(\Phi(t),\partial_t \Phi(t)) := (\ell (\ell+1) + \mu)^{1/2} \, \| \Pi_{\ell} \Phi(t) \|_{L^2}^2 + (\ell (\ell+1) + \mu)^{-1/2} \, \| \Pi_{\ell} \partial_t \Phi(t) \|_{L^2}^2.
\end{equation}

In this paper, we address the question of their preservation by a nonlinear perturbation of the linear Klein--Gordon equation. More precisely, we consider the nonlinear Klein-Gordon equation
\begin{equation}
\label{eq:KG}
\tag{KG}
 \partial_t^2 \Phi(t,x) = \Delta \Phi(t,x) - \mu \Phi(t,x) + g(x) (\Phi(t,x))^{p-1}
\end{equation}
where $p \geq 3$ is an integer and $g \in L^\infty(\mathbb{S}^2;\mathbb{R})$ is a given factor making the equation possibly inhomogeneous. The equation is naturally equipped with initial data $\Phi^{(0)} \in H^1(\mathbb{S}^2;\mathbb{R})$ and $ \dot{\Phi}^{(0)} \in  L^2(\mathbb{S}^2;\mathbb{R})$, i.e.
$$
\forall x\in \mathbb{S}^2,\quad \Phi(0,x) = \Phi^{(0)}(x) \quad \mathrm{and} \quad \partial_t \Phi(0,x) = \dot{\Phi}^{(0)}(x).
$$
Focusing only on small solutions, $\varepsilon := \| \Phi^{(0)}  \|_{H^1}+ \| \dot{\Phi}^{(0)}\|_{L^2} \ll 1$, \eqref{eq:KG} is a perturbation of the linear Klein--Gordon equation and the question of the preservation of the harmonic energies \eqref{eq:def_harm_en} makes sense. 

Since \eqref{eq:KG} is locally well-posed (see subsection \ref{sub:LWP} for details), the dynamics of \eqref{eq:KG} remain close to the dynamics of the linearized equation for times of order $\varepsilon^{-(p-2)}$. As a consequence, on such a time scale, the super-actions are almost preserved. However, their conservation on longer time scales is nontrivial. Actually, there exist counter-examples for similar systems : the cubic wave equation on $\mathbb{T}^2$  \cite{GGMP21} and the cubic Klein--Gordon equation on $\mathbb{S}^3$ with a unit mass \cite{BCEHLM17,CEL17}. Nevertheless, they are closely related to the existence of resonances (i.e. the frequencies $\omega_\ell$ have to be rationally linked) which only hold for exceptional values of the mass $\mu$.

For generic values of the mass $\mu$, in \cite{BDGS07} , Bambusi,  Delort, Gr\'ebert and Szeftel prove the almost preservation, for very long times, of the harmonic energies of the nonlinear Klein--Gordon equations on Zoll manifolds (which include $\mathbb{S}^d$ for all $d\geq 2$). Nevertheless, their result only hold for very smooth solutions (in particular $g$ has to be smooth). More precisely, they prove\footnote{Actually, they only prove a $\ell^\infty$ instead of the $\ell^1$ estimate \eqref{eq:BDGS} (see Remark 3.21 of \cite{BDGS07}). Indeed, since they are only really interested in the variations of the $H^s$ norm, they do not have written a sharp estimate on the variation of the super-actions. Nevertheless, estimate \eqref{eq:BDGS} would be a direct corollary of their proof. } that for all $r \gg 1$ chosen arbitrarily large, there exists $s_0(r) $ such that for all $s \geq s_0(r)$, provided that $\varepsilon$ (the norm of the initial datum $(\Phi^{(0)},\dot{\Phi}^{(0)})$ in $H^{s+1/2} \times H^{s-1/2}$) is small enough, while $|t|< \varepsilon^{-r}$, the solution to the nonlinear Klein--Gordon equation exists and it satisfies
\begin{equation}
\label{eq:BDGS}
|t|\leq \varepsilon^{-r} \quad \Rightarrow \quad \sum_{\ell \in \mathbb{N}} \langle  \ell \rangle^{2s} \big |   \mathcal{E}_\ell(\Phi(t),\partial_t \Phi(t)) - \mathcal{E}_\ell( \Phi^{(0)},\dot{\Phi}^{(0)}) \big| \lesssim \varepsilon^p.
\end{equation}
The main flaw of this result is the smoothness assumption $s \geq s_0(r)$. Indeed, in their construction, the smoothness parameter $s_0(r)$ grows at least linearly with respect to $r$. In other words, the longer the time during which they prove the preservation of the super-actions is, the smoother the solutions have to be. This smoothness assumption is crucial in their proof and is systematically used to prove similar results -- see e.g. \cite{Bou96,Bam03,BG06,CHL08a,Del12,Ime13,Del15,BFG20b}. Nevertheless, on simpler models, numerical experiments strongly suggest that this assumption is irrelevant (i.e. $s_0(r)$ should not depends on $r$), see e.g. \cite{CHL08a, CHL08b} for discussions about \eqref{eq:KG} on $\mathbb{T}$.

Actually, in \cite{BDGS07}, the authors are interested in the preservation of super-actions because they aim at proving the \emph{almost global well-posedness} of the equation (i.e. well-posedness for times of order $\varepsilon^{-r}$ with $r$ arbitrarily large). Roughly speaking, since
$$
\| u(t) \|_{H^s}^2 =  \sum_{\ell \in \mathbb{N}} \langle \ell \rangle^{2s} \mathcal{E}_\ell(\Phi(t),\partial_t \Phi(t)),
$$
they proceed by bootstrap : assuming that $\| u(t) \|_{H^s}^2 \leq 2 \| u(0)\|_{H^s}^2 \simeq \varepsilon^2$, they control the variations of the super-action using \eqref{eq:BDGS} and, as a corollary, they deduce the sharper estimate
$$
\| u(t) \|_{H^s}^2  =  \| u(0) \|_{H^s}^2  + \mathcal{O}(\| u(0) \|_{H^s}^p) .
$$

However, in low dimension ($d\leq 2$), it is well known that smoothness is not required to obtain solutions for very long times. Indeed, the preservation of the Hamiltonian  
\begin{equation}
\label{eq:hamKg}
\mathcal{H}(\Phi,\partial_t \Phi) =   \int_{\mathbb{S}^2} \ \frac{| \nabla \Phi(x) |^2}2 + \mu \frac{(\Phi(x))^2}2  + \frac{(\partial_t \Phi(x))^2}2 - \frac{g(x) (\Phi(x))^{p}}p   \,  \mathrm{d}\mathrm{vol}_{\mathbb{S}^2}(x)
\end{equation}
provides an a priori global control of the energy norm ($H^1\times L^2$) of small solutions (see Lemma \ref{lem:ell}). Hence, one can derive the global well-posedness of the Cauchy problem associated with \eqref{eq:KG} (provided that the initial data are small enough; see Proposition \ref{prop:GWP} for details). Therefore, it is all the more natural to try to remove the smoothness assumption $s\geq s_0(r)$ of  \cite{BDGS07} to control the variations of the harmonic energies.

In the following theorem, which is the main result of this paper, we control, without regularity assumption, the variations of the low super-actions:
\begin{theorem}\label{thm:main} For all $r\geq p$, all $\nu>0$ and almost all $\mu >0$, there exist $\varepsilon_0>0$, $C>0$ and $\alpha_r>0$ (depending only on $r$) such that, provided that $\varepsilon := \| \Phi^{(0)}  \|_{H^1}+ \| \dot{\Phi}^{(0)}\|_{L^2} < \varepsilon_0$, the global solution to \eqref{eq:KG} satisfies
$$
|t|<\varepsilon^{-r} \quad \Rightarrow \quad \forall \ell \in \mathbb{N}, \quad \big|\mathcal{E}_\ell( \Phi^{(0)},\dot{\Phi}^{(0)}) - \mathcal{E}_\ell(\Phi(t),\partial_t \Phi(t))\big| \leq C \langle \ell \rangle^{\alpha_r} \varepsilon^{p-\nu}.
$$
\end{theorem}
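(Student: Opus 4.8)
The plan is to run a Birkhoff normal form procedure directly in the energy space $H^1\times L^2$ (equivalently $H^{1/2}$ for the variable $u$ defined in \eqref{eq:defu}), using the \emph{super-actions} $J_\ell=\|\Pi_\ell u\|_{L^2}^2$ as the conserved quantities and quantifying everything by the energy norm $\varepsilon$ rather than a high Sobolev norm. Writing \eqref{eq:KG} in the variable $u$, the Hamiltonian becomes $\mathcal H = Z_0 + P$, where $Z_0(u)=\sum_\ell \omega_\ell J_\ell$ is the quadratic part and $P$ is a homogeneous polynomial of degree $p$ (plus higher order terms if $g(\Phi)^{p-1}$ is replaced by a more general nonlinearity; here exactly degree $p$) whose vector field maps $H^{1/2}$ to itself thanks to the Sobolev embedding $H^{1/2}(\mathbb S^2)\hookrightarrow L^4$ and, more generally, multilinear estimates on products of spherical harmonics. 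First I would fix the formalism: define a class of multilinear Hamiltonians together with a norm measuring the boundedness of their vector fields as multilinear maps on $H^{1/2}$ with polynomial loss $\langle\max\ell_i/\text{med}\,\ell_i\rangle^{N}$ in the cluster indices — this is the natural low-regularity analogue of the classes in \cite{BDGS07,BG06}, and the crucial input is the new probabilistic product estimate for Laplace eigenfunctions announced in the abstract, which controls $\|\Pi_{\ell_1}u_1\cdots\Pi_{\ell_k}u_k\|_{L^2}$ with only a polynomial (not exponential) loss in the $\ell_i$.

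The normal form itself proceeds by induction on the degree. At each step one has $\mathcal H = Z_0 + Z + P_{\le r} + R$, where $Z$ Poisson-commutes with $Z_0$ (hence with all $J_\ell$), $P_{\le r}$ collects the non-resonant polynomial terms of degree $\le r$, and $R$ has a vector field of size $O(\varepsilon^{r})$ on the ball of radius $\varepsilon$ in $H^{1/2}$. To remove the degree-$n$ part of $P_{\le r}$ one solves the homological equation $\{Z_0,\chi_n\}=P_n - \langle P_n\rangle$, where $\langle\cdot\rangle$ denotes the average over the torus action generated by $Z_0$ (i.e. the resonant part). The solution is $\chi_n$ with coefficients divided by the small divisors $\omega_{\ell_1}\pm\cdots\pm\omega_{\ell_n}$; here one invokes the non-resonance estimate for generic $\mu$ — for almost every $\mu>0$ there are constants $\gamma,\tau>0$ with $|\omega_{\ell_1}\pm\cdots\pm\omega_{\ell_n}|\ge \gamma (\max_i\langle\ell_i\rangle)^{-\tau}$ unless the combination vanishes identically by symmetry — which can be proved by a measure/Borel–Cantelli argument as in \cite{BDGS07,Bam03}. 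The key point, and the reason the final exponent $\alpha_r$ depends on $r$ while the \emph{regularity} does not, is that dividing by these small divisors costs a polynomial power of the cluster indices at each of the finitely many ($\le r$) steps; these powers accumulate but stay finite, so one still controls the flow of $\chi:=\sum_n\chi_n$ on $H^{1/2}$ and conjugates $\mathcal H$ into normal form with an $O(\varepsilon^{-\nu})$-type loss absorbed into the $\varepsilon^{p-\nu}$ of the statement.

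Once the system is $\mathcal H = Z_0 + Z + R$ with $\{Z,J_\ell\}=0$ for all $\ell$ and $\|X_R\|_{H^{1/2}}\lesssim\varepsilon^{r}$ on the relevant ball, the conclusion follows by a standard bootstrap. The change of variables $u\mapsto\Psi(u)$ is $\varepsilon^{p-2}$-close to the identity on $H^{1/2}$ by Lemma \ref{lem:ell} and the global-in-time a priori bound $\|u(t)\|_{H^{1/2}}\lesssim\varepsilon$ coming from the Hamiltonian \eqref{eq:hamKg} (Proposition \ref{prop:GWP}); in the new variables $\tfrac{d}{dt}J_\ell = \{J_\ell,R\}$, which is bounded by $\langle\ell\rangle^{\alpha_r}\varepsilon\cdot\varepsilon^{r}$ (the factor $\langle\ell\rangle^{\alpha_r}$ accounting for the weight when pairing $X_R$ against the projector $\Pi_\ell$), and integrating over $|t|<\varepsilon^{-r}$ gives the variation $\lesssim\langle\ell\rangle^{\alpha_r}\varepsilon^{p-\nu}$ after transporting back through $\Psi$. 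I expect the main obstacle to be the multilinear estimate on products of eigenfunctions with polynomial loss: without smoothness one cannot afford the exponential-in-$\ell$ bounds that $L^\infty$ bounds on spherical harmonics would give, so one must exploit cancellation/concentration — this is precisely where Levy's concentration inequality on the orthogonal group (applied to a random choice of Hilbert basis in each $E_\ell$) enters, replacing the deterministic product bound by one valid for almost every basis, which in turn forces the "almost all $\mu$" and the probabilistic flavour of the whole argument. Controlling how these random bases interact across the finitely many normal-form steps, and checking that the resulting Hamiltonian vector field estimates compose, is the technical heart of the paper.
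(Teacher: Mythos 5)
Your overall architecture matches the paper's: diagonalize into the variable $u$, construct a random orthonormal basis via Levy's concentration inequality, fix a non-resonant mass $\mu$, run a Birkhoff normal form with respect to the super-actions $J_\ell$, and close with the a priori $H^{1/2}$ bound coming from the Hamiltonian. But there is a genuine gap at the heart of the normal-form step that, as written, would re-introduce the very smoothness requirement the theorem is trying to eliminate.

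You state the non-resonance condition as $|\omega_{\ell_1}\pm\cdots\pm\omega_{\ell_n}|\ge \gamma(\max_i\langle\ell_i\rangle)^{-\tau}$, cite \cite{BDGS07,Bam03}, and then propose to solve the full cohomological equation $\{Z_0,\chi_n\}=P_n-\langle P_n\rangle$, removing every term not killed by the torus average. That is exactly the classical scheme, and its small-divisor loss is a power of the largest (or, at best, the third-largest) index; for data merely in $H^{1/2}$ this loss is unabsorbable, which is precisely why \cite{BDGS07} requires $s\geq s_0(r)$. The paper avoids this by two linked moves that your proposal does not contain. First, Proposition~\ref{prop:strong_nr} proves a \emph{stronger} Diophantine bound: for a.e.\ $\mu$, $|\Omega(\sigma,\mathbf{k})|\geq\gamma_r\,\kappa(\sigma,\mathbf{k})^{-\alpha_r}$ where $\kappa(\sigma,\mathbf{k})$ is the \emph{smallest effective index} (smallest $\langle\ell_j\rangle$ whose net sign does not cancel), exploiting the accumulation of the $\omega_\ell$ on the affine lattice $\mathbb Z+\tfrac12$. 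Second, the normal form in Theorem~\ref{thm_NF} does \emph{not} remove all non-resonant monomials: at each step $Q^{(n)}$ is split into $L$ (the terms with $\kappa\leq N$, which are cancelled via the cohomological equation at a uniform cost $\delta^{-1}\simeq N^{\alpha}$) and $U$ (the terms with $\kappa>N$, which are simply checked to already commute with all $J_\ell$ with $\langle\ell\rangle\leq N$ and are kept). The resulting $Q^{\leq N}_{\mathrm{res}}$ commutes only with the \emph{low} super-actions. This is where the polynomial-in-$N$ loss $\alpha_r$ of the theorem comes from; the high super-actions with $\langle\ell\rangle\gtrsim N^{(\max)}=\varepsilon^{-(p-2)/(\alpha_r+1)}$ are handled separately and trivially from the a priori bound $\|u(t)\|_{H^{1/2}}\lesssim\varepsilon$, which your proposal omits.

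A secondary but material point: the class of Hamiltonians you describe, with loss measured by $\langle\max\ell_i/\mathrm{med}\,\ell_i\rangle^{N}$, is the Delort--Szeftel class, which the paper explicitly declines to use on the grounds that it is inherently high-regularity. The classes in Section~\ref{sec:hamfo} instead carry a factor $\sqrt{\Upsilon(\mathbf{k})}$, with $\Upsilon(\mathbf{k})$ the largest $\langle\ell_j\rangle$ among indices that appear with multiplicity exactly one in $\mathbf{k}$. This is exactly the quantity that the Levy-concentration estimate of Theorem~\ref{t:proba} produces: the integral $\int e_{k_1}\cdots e_{k_p}\,g$ gains a factor $\Upsilon(\mathbf{k})^{-1/2}$ (up to logs). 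The norms $\|\cdot\|_{\mathscr H}$ and $\|\cdot\|_{\mathscr C}$ are built around $\Upsilon$, and the Poisson-bracket and flow estimates (Propositions~\ref{poisson}--\ref{prop_ham_flow}) are checked against this bookkeeping; with the max/med weights the Poisson-bracket closure estimate in $H^{1/2}$ would not hold. Finally, the paper also carries out a finite-dimensional truncation to $\mathbb{C}^{\mathcal T_M}$ with $M\simeq\varepsilon^{-r}$ before applying Theorem~\ref{thm_NF}, tracking all constants as powers of $B=\max(\log M,N)$ — without this the remainder control would not be quantitative. Your bootstrap argument at the end is the right idea, but it only closes once these structural choices are in place.
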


Let us compare this result with the one of \cite{BDGS07} (i.e. \eqref{eq:BDGS}). For low super-actions (i.e. $\ell \simeq 1$), Theorem \ref{thm:main} is much better as it provides the same control on the variations of the super-actions (up to the $\varepsilon^{-\nu}$ loss) without requiring any smoothness assumption. Conversely, contrary to \eqref{eq:BDGS}, due to the $\langle \ell \rangle^{\alpha_r}$ loss, our result does not provide any information about the variation of the very high super-actions (i.e. $\ell \gg \varepsilon^{-(p-2)/ \alpha_r}$). Nevertheless, since the loss with respect to $\ell$ is polynomial,  Theorem \ref{thm:main} provides a nontrivial control of the variations of some ``quite high'' super-actions (i.e.  $1 \ll \ell \ll \varepsilon^{-(p-2)/ \alpha_r}$). 

Using this optimization and the a priori control on the energy norm of the solutions, we derive the following corollary which can be viewed as a kind of weak orbital stability result.
\begin{corollary} \label{cor:main} For all $r\geq p$, $s<1/2$ and almost all $\mu >0$, there exist $\varepsilon_0>0$, $C>0$ and $\delta>0$ (which does not depend on $\mu$) such that, provided that $\varepsilon := \| \Phi^{(0)}  \|_{H^1}+ \| \dot{\Phi}^{(0)}\|_{L^2} < \varepsilon_0$, the global solution of \eqref{eq:KG} satisfies
$$
|t|< \varepsilon^{-r} \quad \Rightarrow \quad \| u(t) - \sum_{\ell \in \mathbb{N}} e^{-i H_\ell(t)} \Pi_\ell u(0)  \|_{H^s} \leq C \varepsilon^{1+\delta}
$$
where $H_\ell(t) : E_\ell \otimes \mathbb{C} \to E_\ell \otimes \mathbb{C}$ are Hermitian maps and $u\in C^0(\mathbb{R};H^{1/2})$ is defined by \eqref{eq:defu}.
\end{corollary}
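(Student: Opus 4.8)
The plan is to derive Corollary~\ref{cor:main} directly from Theorem~\ref{thm:main} and the a priori control of the energy norm of small solutions (Lemma~\ref{lem:ell}), with no further dynamical input: once the quantitative almost-conservation of the super-actions is available, the statement becomes essentially a finite-dimensional linear-algebra fact applied cluster by cluster. I would split the spherical-harmonic clusters into a low part $\ell\le L$, on which Theorem~\ref{thm:main} is efficient, and a high part $\ell>L$, on which the $H^{1/2}$ bound is efficient, and tune the cutoff $L=L(\varepsilon)\to\infty$ at the very end. Since $H^{s'}\hookrightarrow H^{s}$ for $s\le s'$, one may also assume $s\ge 0$ without loss of generality.

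The key observation is that, for a fixed vector $v$ in the finite-dimensional complex Euclidean space $E_\ell\otimes\mathbb{C}$, the orbit $\{Uv:\ U\ \text{unitary}\}$ is exactly the sphere of radius $\|v\|_{L^2}$; hence, for any $w\in E_\ell\otimes\mathbb{C}$, compactness of the unitary group provides a unitary $U$ minimizing $\|w-Uv\|_{L^2}$, and for it
\[
\|w-Uv\|_{L^2}=\mathrm{dist}\big(w,\{U'v:\ U'\ \text{unitary}\}\big)=\big|\,\|w\|_{L^2}-\|v\|_{L^2}\,\big|.
\]
Writing $U=e^{-iH}$ with $H$ Hermitian and applying this with $v=\Pi_\ell u(0)$, $w=\Pi_\ell u(t)$ produces, for every $\ell$ and $t$ (no regularity in $t$ being claimed, one simply selects a minimizer), a Hermitian map $H_\ell(t)$ on $E_\ell\otimes\mathbb{C}$ with $\|\Pi_\ell u(t)-e^{-iH_\ell(t)}\Pi_\ell u(0)\|_{L^2}=\big|\,\|\Pi_\ell u(t)\|_{L^2}-\|\Pi_\ell u(0)\|_{L^2}\,\big|$; these are the maps appearing in the statement. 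Since $\|\Pi_\ell u(t)\|_{L^2}^2=\mathcal{E}_\ell(\Phi(t),\partial_t\Phi(t))$ by definition, Theorem~\ref{thm:main} (applied with, say, $\nu=\min(1,\tfrac{p-2}{2})>0$) gives, for $|t|<\varepsilon^{-r}$, the bound $\big|\|\Pi_\ell u(t)\|_{L^2}^2-\|\Pi_\ell u(0)\|_{L^2}^2\big|\le C\langle\ell\rangle^{\alpha_r}\varepsilon^{p-\nu}$, hence, using $|a-b|\le\sqrt{|a^2-b^2|}$ for $a,b\ge 0$, the bound $\big|\,\|\Pi_\ell u(t)\|_{L^2}-\|\Pi_\ell u(0)\|_{L^2}\,\big|\le C^{1/2}\langle\ell\rangle^{\alpha_r/2}\varepsilon^{(p-\nu)/2}$.

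It then remains to sum. Using $\|f\|_{H^s}^2=\sum_{\ell}\langle\ell\rangle^{2s}\|\Pi_\ell f\|_{L^2}^2$ I would write
\[
\Big\|u(t)-\sum_{\ell}e^{-iH_\ell(t)}\Pi_\ell u(0)\Big\|_{H^s}^2=\sum_{\ell}\langle\ell\rangle^{2s}\big|\,\|\Pi_\ell u(t)\|_{L^2}-\|\Pi_\ell u(0)\|_{L^2}\,\big|^2
\]
and split at $\ell=L$. For $\ell\le L$ each term is $\le C\langle\ell\rangle^{2s+\alpha_r}\varepsilon^{p-\nu}$, so this part is $\lesssim\varepsilon^{p-\nu}L^{2s+\alpha_r+1}$. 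For $\ell>L$, I use $|a-b|^2\le 2(a^2+b^2)$, the elementary bound $\langle\ell\rangle^{2s}\le\langle L\rangle^{2s-1}\langle\ell\rangle$ valid for $\ell>L$ since $s<1/2$, and $\sum_{\ell}\langle\ell\rangle\|\Pi_\ell u(t)\|_{L^2}^2=\|u(t)\|_{H^{1/2}}^2\lesssim\varepsilon^2$ from Lemma~\ref{lem:ell} (together with the analogous bound at $t=0$ coming from \eqref{eq:defu}), so this part is $\lesssim L^{2s-1}\varepsilon^2$. With $L=\lfloor\varepsilon^{-\beta}\rfloor$ this gives $\lesssim\varepsilon^{p-\nu-\beta(2s+\alpha_r+1)}+\varepsilon^{2+\beta(1-2s)}$. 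Since $p-\nu-2>0$ and $1-2s>0$, I would first fix $\delta>0$ small enough that $\tfrac{2\delta}{1-2s}\le\tfrac{p-\nu-2-2\delta}{2s+\alpha_r+1}$, then choose $\beta$ in the resulting nonempty interval; both exponents above are then $\ge 2+2\delta$, which yields $\|u(t)-\sum_{\ell}e^{-iH_\ell(t)}\Pi_\ell u(0)\|_{H^s}\le C\varepsilon^{1+\delta}$ for $\varepsilon<\varepsilon_0$. Crucially $\delta$ depends only on $p$, $s$ and $r$ (through $\alpha_r$ and the choice of $\nu$), not on $\mu$, while $C$ and $\varepsilon_0$ may inherit the $\mu$-dependence of Theorem~\ref{thm:main} and Lemma~\ref{lem:ell}.

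The main thing to watch — and essentially the only obstacle — is the bookkeeping of exponents: the argument is not dynamical beyond Theorem~\ref{thm:main}, but one must check that the power gained from the super-action estimate on low clusters and the power gained from the Sobolev interpolation on high clusters can be made simultaneously positive, which is precisely what constrains $s$ to be $<1/2$ and fixes the admissible range of $\delta$ and $\beta$.
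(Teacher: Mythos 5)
Your proof is correct and its core coincides with the paper's: both reduce, cluster by cluster, to the observation that the optimal unitary correction sends $\Pi_\ell u(0)$ to the nearest point of the sphere of radius $\sqrt{J_\ell(u(0))}$ to $\Pi_\ell u(t)$, so that the per-cluster error is exactly $\bigl|\sqrt{J_\ell(u(t))}-\sqrt{J_\ell(u(0))}\bigr|$, controlled via $(a-b)^2\le|a^2-b^2|$ and Theorem~\ref{thm:main}. (The paper realizes this by the explicit formula $\Pi_\ell w(t)=\sqrt{J_\ell(u(0))/J_\ell(u(t))}\,\Pi_\ell u(t)$, you by abstractly selecting a minimizer over the unitary group; the two coincide up to the choice of unitary.) The one genuine difference is the summation step: the paper interpolates between the uniform $H^{1/2}$ bound and an intermediate $H^{-\alpha_r/2}$ estimate, while you split at a frequency cutoff $L=\lfloor\varepsilon^{-\beta}\rfloor$ and optimize $\beta$. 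Your version is in fact a little more careful: the paper's intermediate claim $\|u(t)-w(t)\|_{H^{-\alpha_r/2}}\lesssim\varepsilon^{(2p-1)/4}$ is presented as a direct consequence of $J_\ell(u(t)-w(t))\lesssim\langle\ell\rangle^{\alpha_r}\varepsilon^{p-1/2}$, but the resulting series $\sum_\ell\langle\ell\rangle^{-\alpha_r}\cdot\langle\ell\rangle^{\alpha_r}\varepsilon^{p-1/2}$ does not converge without also invoking the $H^{1/2}$ a priori bound on the tail---which is precisely the high-frequency truncation you carry out explicitly. Your exponent bookkeeping checks out (in particular $p-\nu-2>0$ since $p\ge 3$ and $\nu=\min(1,(p-2)/2)$, so the admissible interval for $\beta$ is nonempty for small $\delta$), and you correctly observe that $\delta$ is $\mu$-independent because $\alpha_r$ in Theorem~\ref{thm:main} is.
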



\subsection*{Further bibliographical comments} The question of the stability of the linear dynamics makes sense for most nonlinear partial differential equations on confined domains. In high regularity, Birkhoff normal forms lead to many important successes in proving the stability of several other interesting systems : \cite{Bou96,BG06,GIP,FGL13,YZ14,BD17,BMP20,FI20,FI19} in the non-resonant case and \cite{Bam99,Bou00,KillBill,KtA} in the resonant case. 

For Klein--Gordon, the papers \cite{Bou96,Bam03,BG06,CHL08a,Del12,Ime13,Del15,BFG20b} provide results similar to the one of Bambusi, Delort, Gr\'ebert and Szeftel \cite{BDGS07} (i.e. preservation of the super-actions up to times of order $\varepsilon^{-r}$ with $r$ arbitrarily large) but hold on other manifolds or with quasi-linear perturbations. The works \cite{DS04,DS06,Del09,FZ10,DI17,FGI20} only reach shorter times of stability but improve the one given by the local well-posedness (i.e. they get stability for $|t|<\varepsilon^{-q}$ with $q>p-2$ but not arbitrarily large). On some manifolds, for high modes, due to the quasi-resonance (i.e. when the small divisors are too small), some of these time scales seem so far to be optimal.
We also mention the recent works \cite{GP16,BB20} about the existence of KAM tori for the nonlinear Klein--Gordon equations.

Very recently, in \cite{BG21}, the first two authors have introduced a new way of performing Birkhoff normal forms for Hamiltonians PDEs which, contrary to the previous results, allows to deal with non-smooth solutions. As in Theorem \ref{thm:main}, they prove almost-conservation, 
for very long times, in low regularity, of the low (super-)actions of several nonlinear dispersive PDEs on tori or boxes (including nonlinear Klein--Gordon equations on $[0,\pi]$ with homogeneous Dirichlet boundary conditions). Nevertheless, as discussed below, to be extended to more general domains (like spheres), this result require nontrivial multilinear vector field estimates. The derivation and the proof of these estimates on the sphere $\mathbb{S}^2$ are the main technical novelties of this paper (see Sections \ref{sec:basis} and \ref{sec:hamfo}).

\subsection*{Comments about the results} $\bullet$ The arbitrarily small loss $\varepsilon^{-\nu}$ in Theorem \ref{thm:main} is the same as the one of Theorem 1.21 in \cite{BG21} (about nonlinear Schr\"odinger equations on $\mathbb{T}^2$). It is due to the fact that, in dimension $2$, $H^1$ is not an algebra. 

\noindent $\bullet$ Reasoning as in Corollary 1.14 of \cite{BG21}, we could prove that Corollary \ref{cor:main} holds in the critical case $s=1/2$ provided that the initial data are a little bit smoother : $\varepsilon = \| \Phi^{(0)}  \|_{H^{1+\eta}}+ \| \dot{\Phi}^{(0)}\|_{H^{\eta}}$ for some $\eta >0$ (and $\delta$ would depend on $\eta$).

\noindent $\bullet$ We could consider much more general nonlinearity in \eqref{eq:KG} (e.g. nonlocal or nonpolynomial). Actually, we chose $g(x) (u(x))^{p-1}$ for simplicity.

\noindent $\bullet$ We are quite confident that our results could be extended to Zoll surfaces. Nevertheless, it would generate a lot a technicalities. It seems to us that we could adapt our multi-linear estimates by considering clusters of quasi-modes (as in \cite{BDGS07}) but the cohomological equations would be much harder to solve (because they would not be diagonal). Moreover, it would raise several interesting questions which deserve further investigations. For example, is it possible to prove the preservation of the low actions (i.e. not only the super actions) for very long times on a generic Zoll manifold and with a generic mass ? Somehow, it would be one way to prove the stability of the linear dynamics. 

\noindent $\bullet$ Conversely, it is not clear if a similar result could be proven in higher dimension (for example on $\mathbb{S}^3$). First, the equation would not be necessarily well-posed. Moreover, our method is strongly related to the fact that $H^1$ is an algebra (or almost an algebra like on $\mathbb{S}^2$).  Indeed, roughly speaking, the Birkhoff normal procedure generates vector fields of arbitrarily large order which are somehow similar to $(\Phi,\partial_t \Phi) \mapsto \Phi^n$ with $p\leq n \leq r+p$. Hence it looks unavoidable to require that the energy space is an algebra.

\subsection*{Comments about the proof}
The proof of our results follow the new Birkhoff normal form strategy introduced by the first two authors (see  \cite[\S1.4]{BG21} for an informal description of this new strategy). Roughly speaking, compared with \cite{Bam03,BG06}, it consists in removing terms which are usually small thanks to the smoothness assumption (and so which are unsolved in that case) using a stronger non-resonance condition. More precisely, we need that the small divisors are controlled by the smallest index instead of the third largest. Even if this new Diophantine condition may seem too restrictive, it is typically satisfied for \eqref{eq:KG} since the eigenvalues of $\sqrt{\mu-\Delta}$ accumulate polynomially fast on $\mathbb{Z}+1/2$, which is an affine lattice. Actually it is a quite  direct application of \cite[Prop.~2.1]{BG21} as explained in section \ref{sec:mass}.

Nevertheless, as usual, the implementation of a normal form procedure requires some structures on the nonlinear part of the vector field of the equation: it has to belong to a class of vector fields which is stable by Lie brackets, resolution of cohomological equations and whose vector fields enjoy good multi-linear estimates in the energy space (here $H^{1/2}$ with respect to the variable $u$ defined by \eqref{eq:defu}).  In \cite{BG21}, such classes have been developed to deal with Hamiltonian PDEs on tori (or boxes) in low regularity. Unfortunately, it seems hopeless to adapt them in more general domains like spheres as they strongly rely on the exceptionally good algebraic properties of the eigenfunctions of the Laplace operator (which are the complex exponentials). On spheres (and more generally on compact Riemannian manifolds), Delort and Szeftel have developed powerful classes of vector fields (see e.g. \cite{DS04, DS06}) on which most of the Birkhoff normal form results are based. Unfortunately, these classes  are unsuitable to work in low regularity as they require a lot of smoothness and it seemed unlikely to us that they could be adapted in low regularity. Hence, we chose to follow a slightly differerent route relying on probabilistic tools referred as Levy's concentration inequalities~\cite{Led01} (see Theorem \ref{thm:Levy}) in order to build the Hamiltonian classes adapted to our problem. See Section~\ref{sec:basis} for the probabilistic estimates and Section~\ref{sec:hamfo} for the multilinear vector field estimates.


\subsection*{Notations} 
It is natural (and usual) to index eigenvectors of the Laplace-Beltrami operators on $\mathbb{S}^2$ by points in a discrete triangle. As a consequence, for all $M \in (0,\infty]$, we define
$$
\mathcal{T}_M := \{ \ (\ell,m) \in \mathbb{N}\times \mathbb{Z} \quad | \quad 0\leq \ell \leq M \quad \mathrm{and} \quad -\ell \leq m \leq \ell  \ \}.
$$

We warn the reader that, as usual, we adopt the following convenient abuse of notation : being given $M>0$, $k \in \mathcal{T}_M$, $\sigma \in \{-1,1\}$ and $u=(u_{k'})_{k'\in\mathcal{T}_M} \in \mathbb{C}^{\mathcal{T}_M}$, we set  
$$
u_k^\sigma = u_k \quad \mathrm{if} \quad  \sigma=1 \quad  \mathrm{and} \quad u_k^\sigma = \overline{u_k} \quad \mathrm{if} \quad \sigma=-1.
$$

If $\mathfrak{p}$ is a parameter or a list of parameters and $x,y\in \mathbb{R}$ then we denote $x\lesssim_{\mathfrak{p}} y$ if there exists a constant $c(\mathfrak{p})$, depending continuously on $\mathfrak{p}$, such that $x\lesssim c(\mathfrak{p}) \, y$. Similarly, we denote $x\gtrsim_{\mathfrak{p}} y$ if $y\lesssim_{\mathfrak{p}} x$ and $x\approx_{\mathfrak{p}} y$ if $x\lesssim_{\mathfrak{p}} y\lesssim_{\mathfrak{p}} x$.

\subsection*{Acknowledgments} We thank Nicolas Burq for helpful discussions on global well-posedness. During the preparation of this work the authors benefited from the support of the Centre Henri Lebesgue ANR-11-LABX-0020-0 and by ANR-15-CE40-0001-01  "BEKAM". The third author was also supported by the Agence Nationale de la Recherche through the PRC grants ODA (ANR-18-CE40-0020) and ADYCT (ANR-20-CE40-0017). 

\section{A good orthonormal basis}


\label{sec:basis}
Recall that
\begin{equation}\label{eq:eigenspace}
E_\ell = \mathrm{Ker} (\Delta + \ell (\ell+1) \mathrm{Id}_{L^2(\mathbb{S}^2,\mathbb{R})})\simeq\mathbb{R}^{2\ell+1},
\end{equation}
and we will denote by $\mathcal{B}_\ell$ the set of orthonormal basis of the Euclidean space $E_\ell$. More generally, we denote by $\mathcal{B}$ the set of orthonormal basis of $L^2(\mathbb{S}^2;\mathbb{R})$:
$$
\mathcal{B}:=\left\{b=(b_\ell)_{\ell\in\mathbb{N}}:\ \forall\ell\geq 0,\ b_\ell\in\mathcal{B}_\ell\right\}.
$$
Hence, an element in $\mathcal{B}_\ell$ is an orthonormal basis of $E_\ell$ that we will denote by $b_\ell=(e_{\ell,m})_{-\ell\leq m\leq \ell}$ and an element of $\mathcal{B}$ can be represented as 
$$
b=(b_\ell)_{\ell\in \mathbb{N}}=(e_{k})_{k\in\mathcal{T}_\infty}=(e_{(\ell,m)})_{(\ell,m)\in\mathcal{T}_\infty}.
$$


When representing vector fields in a Hilbertian basis  $b=(e_k)_{k\in\mathcal{T}_\infty} \in \mathcal{B}$  (which seems natural to perform Birkhoff normal forms), it is classical to end up with estimating quantities of the following form
$$
\int_{\mathbb{S}^2} e_{k_1}(x)\ldots e_{k_p}(x) \mathrm{d}\text{vol}_{\mathbb{S}^2}(x),
$$
where $(k_1,\ldots, k_p)$ is some fixed element in $\mathcal{T}_\infty^p$. 
In the case of the round sphere, an orthonormal basis in $\mathcal{B}$ can be identified with a basis of homogeneous harmonic polynomials on $\mathbb{R}^3$ and one can make use of this structure to get good estimates. For instance, following~\cite[Ex.~4.2]{DS04}, we can verify that
\begin{equation}\label{eq:Delort-Szeftel}
\exists 1\leq j_0\leq r\ \text{such that}\ \sum_{j\neq j_0}\ell_j<\ell_{j_0}\ \Longrightarrow\ \int_{\mathbb{S}^2} e_{(\ell_1,m_1)}(x)\ldots e_{(\ell_p,m_p)}(x) \mathrm{d}\text{vol}_{\mathbb{S}^2}(x)=0.
\end{equation}
See also~\cite[Prop.~1.2.1]{DS06} for related results on more general manifolds. However, without any assumption on the relative size of the $\ell_j$, it seems that the best one can expect for a general orthonormal basis is to apply H\"older's inequality:
$$\left|\int_{\mathbb{S}^2} e_{(\ell_1,m_1)}(x)\ldots e_{(\ell_p,m_p)}(x) \mathrm{d}\text{vol}_{\mathbb{S}^2}(x)\right|\leq\|e_{(\ell_1,m_1)}\|_{L^p}\ldots\|e_{(\ell_p,m_p)}\|_{L^p}.$$
Then, a classical result on Laplace eigenfunctions~\cite{So88} states, for any $(\ell,m)\in\mathcal{T}_\infty$, $\|e_{(\ell,m)}\|_{L^p}\leq C_p\langle\ell\rangle^{\delta(p)}$ with $\delta(p)=\max\{\frac{1}{4}-\frac{1}{2p},\frac12-\frac2p\}$. Moreover, these bounds on $L^p$-norms are known to be sharp along certain sequences of the standard basis of spherical harmonics~\cite{So15}. Despite these a priori bounds and thanks to spectral degeneracies, there is some flexibility in the choice of the orthonormal basis $b\in\mathcal{B}$ we are working with. Following~\cite[Th.~6]{BL13} (see also~\cite{VdK97, ShZe03} or~\cite[Th.~18.5]{Ze08}), one can in fact prove that there exist many elements $b$ in $\mathcal{B}$ (in fact almost all) for which the $L^p$ norms are uniformly bounded. Thus, for such a basis $b$, one can find a constant $C_b>0$ such that, for every $(k_1,\ldots, k_p)\in\mathcal{T}_\infty^p$,
\begin{equation}\label{eq:Burq-Lebeau}
\left|\int_{\mathbb{S}^2}  e_{k_1}(x)\ldots e_{k_p}(x) \mathrm{d}\text{vol}_{\mathbb{S}^2}(x)\right|\leq C_b.
\end{equation}
Unfortunately, these informations do not seem to be enough to handle Birkhoff normal forms for data with low regularity as we are aiming at. Hence, we need to work a little bit more. As we shall see in the upcoming sections, the missing information to handle our Birkhoff normal form procedure is to construct an orthonormal basis in $\mathcal{B}$ for which these integrals have enough decay when there exists an index $1\leq j_0\leq p$ such that
$$(\ell_j,m_j)=(\ell_{j_0},m_{j_0})\ \Longrightarrow\ j=j_0.$$

To that aim, we will prove the following theorem which is the main result of this section:
 \begin{theorem}\label{t:proba}
Let $g\in L^\infty(\mathbb{S}^2;\mathbb{R})$ and let $p\geq 3$. Then, there exist a constant $C_{g,p}>0$ and an orthonormal basis $b=(e_k)_{k\in \mathcal{T}_\infty}\in C^\infty(\mathbb{S}^2;\mathbb{R})^{\mathcal{T}_\infty}$ of $L^2(\mathbb{S}^2; \mathbb{R})$ such that, for all $\mathbf{k} =(k_1,\ldots,k_p) \in \mathcal{T}_\infty^p$
we have
 \begin{equation}
 \label{eq:basis}
 \left| \int_{\mathbb{S}^2} e_{k_1}(x) \cdots e_{k_p}(x) \, g(x) \, \mathrm{d}\mathrm{vol}_{\mathbb{S}^2}(x) \right| \leq C_{g,p}\min\left\{1,  \frac{\log^{p}(2+|\ell|_{\infty})}{ \sqrt{\Upsilon(\mathbf{k})}  }\right\}.
 \end{equation}
 where $|\ell|_\infty = \max_{1\leq j\leq p} \ell_j$ and
 \begin{equation}\label{def:upsilon}
\Upsilon(\mathbf{k}) := \max\{ 1\} \cup \left\{\langle\ell_j\rangle:\forall j'\neq j,\ k_{j'}\neq k_{j}\right\}.
\end{equation}
 Moreover, $b\in\mathcal{B}$, i.e. for all $k=(\ell,m)\in \mathcal{T}_\infty$, we have
 $$
 \Delta e_{\ell,m} = - \ell (\ell+1) \, e_{\ell,m}.
 $$
 \end{theorem}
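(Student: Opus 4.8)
The plan is to construct the basis $b$ randomly and use Levy's concentration inequality (Theorem~\ref{thm:Levy}) to show that a random basis satisfies \eqref{eq:basis} with high probability, hence that such a basis exists. The key point is that for each fixed $\ell$, the eigenspace $E_\ell$ has dimension $2\ell+1$, and the orthogonal group $O(2\ell+1)$ acts transitively on $\mathcal{B}_\ell$; equipping $\mathcal{B}_\ell$ with the Haar probability measure (equivalently, fixing a reference basis and picking a Haar-random rotation $R_\ell\in O(2\ell+1)$), we obtain a random orthonormal basis $e_{\ell,m}=\sum_{m'} (R_\ell)_{m,m'} \phi_{\ell,m'}$ where $(\phi_{\ell,m'})$ is the standard spherical harmonics basis. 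We then take the $R_\ell$ independent over $\ell$, so that $b=(b_\ell)_{\ell}$ is a random element of $\mathcal{B}$.

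\textbf{The main estimate.} Fix $\mathbf{k}=(k_1,\dots,k_p)\in\mathcal{T}_\infty^p$ with $k_j=(\ell_j,m_j)$. Let $j_0$ be an index realizing the maximum in the definition of $\Upsilon(\mathbf{k})$, so $\ell_{j_0}=\Upsilon(\mathbf{k})$ (if $\Upsilon(\mathbf{k})=1$ there is nothing to prove beyond the trivial bound $\leq C_{g,p}$, which follows from \eqref{eq:Burq-Lebeau} after noting that one may also arrange $L^p$-boundedness in the same random construction, or from the crude bound using $\|\phi_{\ell,m}\|_{L^p}$ and unitarity). Condition on all the $R_\ell$ for $\ell\neq\ell_{j_0}$ and on $R_{\ell_{j_0}}$ restricted to the rows $m$ with $(\ell_{j_0},m)\in\{k_1,\dots,k_p\}\setminus\{k_{j_0}\}$; this freezes all the functions $e_{k_j}$ for $j\neq j_0$ (by the choice of $j_0$, none of them equals $k_{j_0}$), leaving only the single unit vector $e_{k_{j_0}}=\sum_{m'}(R_{\ell_{j_0}})_{m_{j_0},m'}\phi_{\ell_{j_0},m'}$ to average over. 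The row $v:=(R_{\ell_{j_0}})_{m_{j_0},\cdot}$ is, conditionally, a uniform point on a sub-sphere of $S^{2\ell_{j_0}}$ of codimension at most $p$. The quantity to control is the linear functional
$$
F(v):=\int_{\mathbb{S}^2} e_{k_{j_0}}(x)\,\Psi(x)\,g(x)\,\mathrm{d}\mathrm{vol}_{\mathbb{S}^2}(x)=\sum_{m'} v_{m'}\int_{\mathbb{S}^2}\phi_{\ell_{j_0},m'}(x)\,\Psi(x)\,g(x)\,\mathrm{d}\mathrm{vol}_{\mathbb{S}^2}(x),
$$
where $\Psi:=\prod_{j\neq j_0}e_{k_j}$ is the (now fixed) product of the other factors. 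This $F$ is linear in $v$, hence $1$-Lipschitz on the sphere up to the factor $\|(\langle\phi_{\ell_{j_0},m'},\Psi g\rangle)_{m'}\|_{\ell^2}=\|\Pi_{\ell_{j_0}}(\Psi g)\|_{L^2}\leq\|\Psi\|_{L^2}\|g\|_{L^\infty}$, which by Cauchy--Schwarz and H\"older and the $L^p$-bounds of \cite{So88,So15} is at most $C_{g,p}\prod_{j\neq j_0}\|e_{k_j}\|_{L^{p-1}}$ — a factor that is polylogarithmic once we work on the good-basis event (or just polynomially bounded in the $\ell_j$, which suffices after optimizing). Moreover $\mathbb{E}[F(v)]=0$ by symmetry $v\mapsto -v$. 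Applying Levy's inequality on the sub-sphere of dimension $\gtrsim\ell_{j_0}$ (the codimension $\leq p$ only costs a constant in the concentration constant), the probability that $|F(v)|>t$ is $\lesssim e^{-c\ell_{j_0}t^2/L^2}$ where $L$ is the Lipschitz constant above. Choosing $t\approx L\sqrt{\log(2+|\ell|_\infty)\cdot(\text{number of terms})/\ell_{j_0}}$ makes this probability summable.

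\textbf{Union bound and Borel--Cantelli.} There are at most $\langle|\ell|_\infty\rangle^{Cp}$ tuples $\mathbf{k}$ with a given value of $|\ell|_\infty$. Taking $t$ as above with an extra $\log$ power to absorb this polynomial count — here is exactly where the $\log^p(2+|\ell|_\infty)$ in \eqref{eq:basis} comes from — the probability that \eqref{eq:basis} \emph{fails} for some $\mathbf{k}$ with $|\ell|_\infty=N$ is summable in $N$. By Borel--Cantelli, almost surely \eqref{eq:basis} holds for all but finitely many $\mathbf{k}$, and enlarging $C_{g,p}$ (using the trivial polynomial bound $\leq C_{g,p}\langle|\ell|_\infty\rangle^{\delta(p)p}$ on the finite exceptional set, then noting $\Upsilon(\mathbf{k})\leq\langle|\ell|_\infty\rangle$ so the finitely many bad tuples can be absorbed by adjusting the constant since for them $\Upsilon$ is bounded) makes it hold for all $\mathbf{k}$. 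Smoothness of the $e_k$ is automatic since each is a finite linear combination of spherical harmonics; the eigenfunction property $\Delta e_{\ell,m}=-\ell(\ell+1)e_{\ell,m}$ holds because $R_\ell\in O(2\ell+1)$ preserves $E_\ell$. \textbf{The hardest part} is bookkeeping the conditioning correctly: one must check that freezing the finitely many "collision rows" of $R_{\ell_{j_0}}$ really does leave a genuine uniform sub-sphere of dimension comparable to $\ell_{j_0}$ to average over (so that Levy's inequality applies with the right dimension), and that the Lipschitz constant $\|\Pi_{\ell_{j_0}}(\Psi g)\|_{L^2}$ is controlled by only polylogarithmic factors — which forces us to run the random construction simultaneously with the Burq--Lebeau good-basis property \eqref{eq:Burq-Lebeau} rather than citing it as a black box.
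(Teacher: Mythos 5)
Your proposal follows essentially the same route as the paper: construct the basis from independent Haar-random rotations on each eigenspace, fix the unique-multiplicity index with largest $\ell$, condition on all other eigenvectors so that the remaining one is uniform on a sub-sphere of codimension $\leq p$, apply Levy's inequality with a Lipschitz constant controlled by the $L^q$-norms of the frozen factors on a ``good event'' of the Burq--Lebeau type, and conclude via union bound and Borel--Cantelli, handling the $\Upsilon(\mathbf{k})=1$ case by the bounded-$L^p$-basis property. (One small slip: H\"older on $\|\prod_{j\neq j_0}e_{k_j}\|_{L^2}$ with $p-1$ factors gives $\prod\|e_{k_j}\|_{L^{2(p-1)}}$, not $\prod\|e_{k_j}\|_{L^{p-1}}$; this is what makes \eqref{eq:concentration-Lq} with $q=2(p-1)$ the right tool for the good event, as in the paper.)
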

This theorem complements the properties given by~\eqref{eq:Delort-Szeftel} and~\eqref{eq:Burq-Lebeau} in the sense that it shows that the integrals of interest are small even if all the $\ell_j$ are of the same order. The only condition is that at least one of the eigenvector appears with multiplicity one in the integral. Note that the decay property we obtain is not that small but it will be enough for our argument. We do not expect that the decay can be much increased except in higher dimensions where the denominator should be $\langle\ell_j\rangle^{\frac{d-1}{2}}$ rather than $\langle\ell_j\rangle^{\frac{1}{2}}$. We emphasize that, contrary to~\eqref{eq:Delort-Szeftel}, this is not valid for any orthonormal basis but only for a generic one as~\eqref{eq:Burq-Lebeau} is. In order to prove this result, we will in fact refine the probabilistic approach used to prove~\eqref{eq:Burq-Lebeau}.

\begin{remark} As a corollary of the proof, we could also get a similar basis enjoying \eqref{eq:basis} for a countable set of degrees $p$ and functions $g$ (but not uniformly).
\end{remark}

\subsection{Probabilistic setup} We start with a short review on Haar measures which will be used to define natural probability measures on the orthogonal group of $E_\ell$. Then, we explain how to use these measures to define probability measures on $\mathcal{B}$ and how they are related to the normalized volume measure on the unit sphere $S_\ell$ of $E_\ell$.

\subsubsection{Background on Haar measures}

Recall that, given a compact group $G$, there exists a Radon measure $\mathfrak{m}_G$ on $G$ such that for every Borel subset $U\subset G$ and for every $g\in G$, $\mathfrak{m}_G(gU)=\mathfrak{m}_G(U)$~\cite[Th.~2.10]{Fo15}. This is called a (left invariant) \emph{Haar measure} on $G$ and for any nonempty open set $U$, one has $\mathfrak{m}_G(U)>0$~\cite[Prop.~2.19]{Fo15}. Moreover, if we fix $\mathfrak{m}_G(G)=1$, then this measure is unique~\cite[Th.~2.20]{Fo15}. The main example we will use in the following is the orthogonal group $O(d)$ of $\mathbb{R}^d$ (with $d\in\mathbb{N}^*$) or more generally, the orthogonal group $O(E)$ of some Euclidean space $E$ of dimension $d$. 
\begin{remark}\label{r:explicit}
 For the sake of concreteness, let us give an explicit expression of $\mathfrak{m}_{O(d)}$ in terms of measures on spheres. Given an orthonormal family $(X_1,\ldots, X_k)$ in $(\mathbb{R}^d)^k$, we denote by $\nu_{d-k-1}^{X_1,\ldots, X_k}$ the normalized volume measure on $\mathbb{S}^{d-1}\cap\operatorname{Span}\{X_1,\ldots,X_k\}^{\perp}$ induced by the Euclidean structure on $\mathbb{R}^{d-1}$. Equivalently,
$$\nu_{d-k-1}:=\frac{\operatorname{vol}_{\mathbb{S}^{d-1}\cap\operatorname{Span}\{X_1,\ldots,X_k\}^{\perp}}}{\operatorname{vol}_{\mathbb{S}^{d-k-1}}\left(\mathbb{S}^{d-k-1}\right)}.$$ 
With these conventions at hand and writing $R=(X_1,\ldots,X_d)\in O(d)$, one can verify using the invariance of $\nu_j$ by rotation that
$$\int_{O(d)}f(R) \mathrm{d}\mathfrak{m}_{O(d)}(R)=\int_{(\mathbb{S}^{d-1})^{d}}f(X_1,\ldots, X_d)\mathrm{d}\nu_0^{X_1,\ldots,X_{d-1}}(X_d)\ldots\mathrm{d}\nu_{d-2}^{X_1}(X_2)\mathrm{d}\nu_{d-1}(X_1).$$
In particular, if $f(R)=f(X_1,\ldots,  X_d)=g(X_1)$, then
$$\int_{O(d)}f(R) \mathrm{d}\mathfrak{m}_{O(d)}(R)=\int_{\mathbb{S}^{d-1}}g(X)\mathrm{d}\nu_{d-1}(X).$$
\end{remark}
If we now fix some compact subgroup $H$ of $G$, it also has a unique left invariant probability measure $\mathfrak{m}_H$. This measure is naturally related to $\mathfrak{m}_G$ as follows. We define $G/H:=\{[g]=gH: g\in G\}$ as the set of (left) cosets and according to~\cite[Th.~2.51, Cor.~2.53]{Fo15}, there exists some $G$-invariant measure $\mu_{G/H}$ such that, for every continuous function on $G$, one has
$$
\int_{G}f(g)\mathrm{d}\mathfrak{m}_{G}(g)=\int_{G/H} \left(\int_{H} f(gh)\mathrm{d}\mathfrak{m}_{H}(h) \right)\mathrm{d} \mu_{G/H}([g]),
$$
or more compactly
\begin{equation}\label{eq:subgroup} 
\mathfrak{m}_{G}=\int_{G/H} g_*(\mathfrak{m}_{H})\mathrm{d}\mu_{G/H}([g]).
\end{equation}
\begin{remark}
 Again, we will use this disintegration of the measure in the case of the orthogonal group $G=O(E)$ and of a subgroup $H=O(V)$, where $V$ is a linear subspace (with the same Euclidean structure) of $E$. Here an element $R\in O(V)$ is identified with an element of $O(E)$ by letting $R|_{V^\perp}=\operatorname{Id}_{V^\perp}$.
\end{remark}



\subsubsection{Probability measures on orthonormal basis}

The measure $\mathfrak{m}_{O(E_\ell)}$ induces a probability measure $\mathbb{P}_\ell$ on the set $\mathcal{B}_\ell$ of orthonormal basis of $E_\ell$ through the map 
$$R\in O(E_\ell)\mapsto \left(R \Phi_{(\ell,m)}\right)_{-\ell\leq m\leq\ell},$$
where $(\Phi_{(\ell,m)})_{-\ell\leq m\leq\ell}$ is a fixed orthonormal basis of $E_\ell$, e.g. the one given by the standard (real-valued) spherical harmonics. More generally, we define on the set $\mathcal{B}$ of orthonormal basis of Laplace eigenfunctions, the product measure 
$$\mathbb{P}=\bigotimes_{\ell=0}^{+\infty}\mathbb{P}_\ell.$$
If we fix some (nonempty) subset $L$ of $\mathbb{N}$, we can define the map
$$\pi_L:b=(b_{\ell})_{\ell\in \mathbb{N}}\in\mathcal{B}\mapsto (b_{\ell})_{\ell\in L}\in\mathcal{B}_L:=\prod_{\ell\in L}\mathcal{B}_\ell.$$
The pushforward $\mathbb{P}_L:=(\pi_L)_*\mathbb{P}$ is defined as
$$\int_{\mathcal{B}_L}f\mathrm{d}\mathbb{P}_L:=\int_{\mathcal{B}}f\circ\pi_L \mathrm{d}\mathbb{P},$$
and it can be written as
$$\mathbb{P}_L=\bigotimes_{\ell\in L}\mathbb{P}_\ell.$$
\begin{remark}
When $L=\{\ell\}$, we just write $\mathbb{P}_{\{\ell\}}=\mathbb{P}_\ell$ as we did so far. We will in fact mostly work with $\mathbb{P}_L$ for some finite set $L$. 
\end{remark}
We can also use the decomposition~\eqref{eq:subgroup} in that context. For instance, one can fix a subset $\mathcal{M}$ of $\{-\ell,\ldots,\ell-1,\ell\}$ and define 
$$V_{\ell,\mathcal{M}}:=\text{Span}\left\{\Phi_{\ell,m}:m\in \mathcal{M}\right\}.$$
Then, given an integrable function $f$ on $\mathcal{B}_\ell$, one can write
\begin{equation}\label{eq:desintegration}
 \int_{\mathcal{B}_\ell}f(b_\ell)\mathrm{d}\mathbb{P}_\ell(b_\ell)=\int_{O(E_{\ell})/ O(V_{\ell,\mathcal{M}})} \! \left(\int_{O(V_{\ell,\mathcal{M}})}\! \! \! f\left( (RR_1\Phi_{\ell,m})_{m}\right)\mathrm{d}\mathfrak{m}_{O(V_{\ell,\mathcal{M}})}(R_1)\right)\mathrm{d}\mu_{O(E_{\ell})/ O(V_{\ell,\mathcal{M}})}([R]).
\end{equation}
\begin{remark}
As $R_1\Phi_{\ell,m}=\Phi_{\ell,m}$ for $m\notin \mathcal{M}$ and for $R_1\in O(V_{\ell,\mathcal{M}})$, the integral 
$$\int_{O(V_{\ell,\mathcal{M}})}f\left( (RR_1\Phi_{\ell,m})_{m}\right)\mathrm{d}\mathfrak{m}_{O(V_{\ell,\mathcal{M}})}(R_1)$$
can be identified with an integral on the the set of orthonormal basis $\mathcal{B}_{\ell,\mathcal{M}}$ of $V_{\ell,\mathcal{M}}$ as we did above.
\end{remark}

\subsubsection{Induced measures on spheres} 

On the one hand, as we aim at finding an orthonormal basis $E_\ell$ with good properties via probabilistic means, it is natural to work with the Haar measure on the corresponding orthogonal group $O(E_\ell)$. On the other hand, our main probabilistic ingredient will be a result on the concentration of the volume measure on spheres of large dimensions as the unit sphere $S_\ell$ of $E_\ell$ is when $\ell\rightarrow+\infty$. As already witnessed from Remark~\ref{r:explicit}, the Haar measure is naturally related to such measures and, in view of our applications, we now make this connection slightly more precise in our context.

Fix $k=(\ell,m)$ in $\mathcal{T}_\infty$ and define the map 
$$\pi_{(\ell,m)}:b_\ell=(e_{(\ell,m')})_{-\ell\leq m'\leq \ell}\in \mathcal{B}_\ell\mapsto e_{(\ell,m)}\in S_\ell,$$ 
where $S_\ell$ is the unit sphere (for the $L^2$-norm) in $E_\ell$. The measure $\mathbb{P}_\ell$ induces a measure on the Euclidean sphere $S_\ell$ as follows:
\begin{equation}\label{eq:induced-measure-sphere}
\forall f\in\mathcal{C}^{0}(S_\ell),\quad\int_{S_\ell}f\mathrm{d}\nu_{2\ell}:=\int_{\mathcal{B}_\ell} f\circ \pi_{(\ell,m)}\mathrm{d}\mathbb{P}_\ell.
\end{equation}
By invariance of the Haar measure through orthogonal transformations, this measure does not depend on the choice of $m$. Still by definition of the Haar measure, one can also check that it is invariant under orthogonal transformations. Thus, by uniqueness of uniformly distributed measures on the sphere~\cite[Th.~3.4]{Ma95}, it can be identified with the \emph{normalized} volume measure $\nu_{2\ell}$ on the $2\ell$-dimensional sphere $S_\ell\simeq\mathbb{S}^{2\ell}$ of $E_\ell\simeq\mathbb{R}^{2\ell+1}$.
\begin{remark} In order to alleviate notations, rather than writing $\pi_{(\ell,m)}\circ\pi_\ell$, we shall also denote by $\pi_{(\ell,m)}$ the map from $\mathcal{B}$ to $S_\ell$ that associates to $b=(e_{(\ell',m')})_{(\ell',m')\in\mathcal{T}_{\infty}}$ the eigenvector $e_{(\ell,m)}$. The induced measure on $S_\ell$ remains the same by construction. 
\end{remark}

\subsection{The key probabilistic ingredient}

The key ingredient in the proof of~\eqref{eq:Burq-Lebeau} and of our proof of Theorem~\ref{t:proba} is the following property~\cite[Eq.~2.6]{Led01}
\begin{theorem}[Levy's inequality] \label{thm:Levy} Let $d\geq 1$ and let $\nu_{d}$ be the normalized volume measure on $\mathbb{S}^{d}$ induced by the Euclidean structure on $\mathbb{R}^{d+1}$. Let $F:\mathbb{S}^{d}\rightarrow \mathbb{R}$ be a continuous function. Then, for every $\delta>0$,
$$\nu_d\left(\left\{|F-m_F|\geq \omega_F(\delta)\right\}\right)\leq 2e^{-\delta^2\frac{d-1}{2}},$$
 where $m_F$ is the median of $F$, i.e. the unique real number such that
 $$\nu_d \left(\left\{F\geq m_F\right\}\right)=\nu_d\left(\left\{F\leq m_F\right\}\right)=\frac{1}{2},$$
 and where $\omega_F(\delta)$ is the modulus of continuity of $F$:
 $$\omega_{F}(\delta):=\sup\left\{|F(u)-F(v)|: d_{\mathbb{S}^{d}}(u,v)\leq\delta \right\}.$$
\end{theorem}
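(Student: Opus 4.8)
\emph{Proof strategy for Theorem~\ref{thm:Levy}.} The statement is Paul Lévy's concentration inequality, and the plan is to derive it from the \emph{spherical isoperimetric inequality} (Lévy--Schmidt), which I take as the one substantial input. Recall that inequality: writing $B_\delta:=\{x\in\mathbb{S}^{d}:d_{\mathbb{S}^{d}}(x,B)\leq\delta\}$ for the closed $\delta$-enlargement of a Borel set $B\subset\mathbb{S}^{d}$, among all $B$ with a prescribed value of $\nu_d(B)$ the quantity $\nu_d(B_\delta)$ is minimal when $B$ is a geodesic ball (spherical cap); equivalently, every set of measure at least $\tfrac12$ has $\delta$-enlargement of measure at least that of a $\delta$-enlarged hemisphere. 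This is, in my view, the main obstacle: I would either cite the classical references, or give a self-contained proof by two-point (polarization) symmetrization in the spirit of Baernstein--Taylor --- polarizing $B$ across a great subsphere does not decrease $\nu_d(B)$, does not increase $\nu_d(B_\delta)$, and a suitable iteration plus a compactness argument drives $B$ to a cap. Everything else is elementary.

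Granting isoperimetry, set $A:=\{F\leq m_F\}$, which is closed and, by definition of the median, satisfies $\nu_d(A)\geq\tfrac12$. If $x\in A_\delta$ and $y\in A$ satisfies $d_{\mathbb{S}^{d}}(x,y)\leq\delta$, then $F(x)-F(y)\leq\omega_F(\delta)$ and $F(y)\leq m_F$, whence $F(x)\leq m_F+\omega_F(\delta)$; therefore
$$\{F>m_F+\omega_F(\delta)\}\subset\mathbb{S}^{d}\setminus A_\delta.$$
Let $H=\{x\in\mathbb{S}^{d}:x_{d+1}\geq 0\}$ be a closed hemisphere (a cap with $\nu_d(H)=\tfrac12$). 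Since $\nu_d(A)\geq\nu_d(H)$, comparing $A$ with the cap of measure $\nu_d(A)$ (which contains $H$), using monotonicity of enlargements in the set and the isoperimetric inequality, we obtain $\nu_d(A_\delta)\geq\nu_d(H_\delta)$, that is $\nu_d(\mathbb{S}^{d}\setminus A_\delta)\leq\nu_d(\mathbb{S}^{d}\setminus H_\delta)$.

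It remains to estimate $\nu_d(\mathbb{S}^{d}\setminus H_\delta)$. One checks $H_\delta=\{x_{d+1}\geq-\sin\delta\}$, so $\mathbb{S}^{d}\setminus H_\delta$ is a spherical cap of geodesic radius $\tfrac\pi2-\delta$; it is empty, hence of measure $0$, when $\delta\geq\tfrac\pi2$, so assume $0\leq\delta<\tfrac\pi2$. Parametrizing $\mathbb{S}^{d}$ by the latitude $\phi\in[-\tfrac\pi2,\tfrac\pi2]$ (so the volume element is proportional to $(\cos\phi)^{d-1}\,\mathrm{d}\phi$), and using $\cos(\delta+t)=\cos\delta\cos t-\sin\delta\sin t\leq\cos\delta\cos t$ followed by $\cos\delta\leq e^{-\delta^2/2}$ on $[0,\tfrac\pi2)$ (a consequence of $\tan\delta\geq\delta$), we get
$$\nu_d\big(\mathbb{S}^{d}\setminus H_\delta\big)=\frac{\int_\delta^{\pi/2}(\cos\phi)^{d-1}\,\mathrm{d}\phi}{\int_{-\pi/2}^{\pi/2}(\cos\phi)^{d-1}\,\mathrm{d}\phi}\leq\frac{\int_0^{\pi/2-\delta}\big(\cos(\delta+t)\big)^{d-1}\,\mathrm{d}t}{\int_0^{\pi/2}(\cos t)^{d-1}\,\mathrm{d}t}\leq(\cos\delta)^{d-1}\leq e^{-\frac{(d-1)\delta^{2}}{2}}.$$
Chaining the three previous displays gives $\nu_d\big(\{F>m_F+\omega_F(\delta)\}\big)\leq e^{-(d-1)\delta^{2}/2}$. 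Applying this to $-F$, whose median is $-m_F$ and whose modulus of continuity is again $\omega_F$, yields $\nu_d\big(\{F<m_F-\omega_F(\delta)\}\big)\leq e^{-(d-1)\delta^{2}/2}$, and a union bound gives $\nu_d\big(\{|F-m_F|>\omega_F(\delta)\}\big)\leq 2e^{-(d-1)\delta^{2}/2}$. The non-strict form recorded in~\cite{Led01} and used above follows by an elementary approximation (e.g. apply the bound with $\delta'<\delta$ such that $\omega_F(\delta')<\omega_F(\delta)$, which exists whenever $\omega_F(\delta)>0$ since $F$ is uniformly continuous, and let $\delta'\uparrow\delta$; the case $\omega_F(\delta)=0$ is trivial). $\hfill\square$
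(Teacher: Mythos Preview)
Your argument is correct and follows the standard route to L\'evy's inequality via the spherical isoperimetric inequality; this is exactly the approach in Ledoux~\cite[\S2.1]{Led01}, which the paper cites without reproducing a proof. There is nothing to compare: the paper treats Theorem~\ref{thm:Levy} as a black-box input from~\cite{Led01} and supplies no argument of its own.

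One cosmetic remark: in your display for $\nu_d(\mathbb{S}^{d}\setminus H_\delta)$, the passage from the denominator $\int_{-\pi/2}^{\pi/2}(\cos\phi)^{d-1}\,\mathrm{d}\phi$ to $\int_0^{\pi/2}(\cos t)^{d-1}\,\mathrm{d}t$ silently drops a factor of~$2$; the inequality is still valid (you are only making the bound weaker), but a reader may stumble. You could either keep the factor and note that the extra $\tfrac12$ is harmless, or simply write the first expression with denominator $2\int_0^{\pi/2}(\cos\phi)^{d-1}\,\mathrm{d}\phi$ from the outset.
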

In other words, this theorem states that functions with small oscillations on spheres of large dimensions are almost constant. Following~\cite{ShZe03, Ze08, BL13}, let us illustrate how to use this theorem when $F_q(u):=\|u\|_{L^q(\mathbb{S}^2)}$ with $2\leq q<\infty$. Here $u$ belongs to $S_{\ell}$ that we identify with $\mathbb{S}^{2\ell}$ by fixing some orthonormal basis $(\Phi_{(\ell,m)})_{-\ell\leq m\leq \ell}$ of $E_\ell$. One has
\begin{equation*}
\begin{split}
|F_q(u)-F_q(v)|\leq \|u-v\|_{L^q(\mathbb{S}^2)}&\leq \|u-v\|_{L^2(\mathbb{S}^2)}^{\frac{2}{q}}\|u-v\|_{L^\infty(\mathbb{S}^2)}^{1-\frac{2}{q}} \\
&\leq \|u-v\|_{L^2(\mathbb{S}^2)}^{\frac{2}{q}}\Big(\sup_{x\in\mathbb{S}^2}\Big\vert\sum_{m=-\ell}^\ell \langle u-v,\Phi_{(\ell,m)}\rangle_{L^2} \Phi_{(\ell,m)}(x)\Big\vert\Big)^{1-\frac{2}{q}}\\
&\leq \|u-v\|_{L^2(\mathbb{S}^2)}\Big(\sup_{x\in\mathbb{S}^2}\Big\{\sum_{m=-\ell}^\ell \Phi_{(\ell,m)}(x)^2\Big\}\Big)^{\frac{1}{2}-\frac{1}{q}}.
\end{split}
\end{equation*}
Now observing that the sum is the Schwartz kernel of the spectral projector $\mathbf{1}_{\ell(\ell+1)}(-\Delta)$ evaluated on the diagonal and that this is a spherical invariant quantity, we deduce that these sums are independent of $x\in\mathbb{S}^2$ and thus equal to $2\ell+1$. Hence, there exists some constant $c_0>0$ such that, for every $\ell\geq 1$ and for every $2\leq q<\infty$,
$$|F_q(u)-F_q(v)|\leq \|u-v\|_{L^2(\mathbb{S}^2)}\left(2\ell+1\right)^{\frac{1}{2}-\frac{1}{q}}\leq c_0 d_{\mathbb{S}^{2\ell}}(u,v)\left(2\ell+1\right)^{\frac{1}{2}-\frac{1}{q}},$$
from which we infer the existence of $c_1>0$ (independent of $\ell$ and $q$) such that
$$
\forall\delta>0,\quad \nu_{2\ell}\left(\left\{u\in S_{\ell}:\ |\|u\|_{L^q}-m_{F_q}|\geq \delta\right\}\right)\leq 2e^{-c_1\delta^2\ell^{\frac{2}{q}}}.
$$
Finally, the constant $m_{F_q}$ can be estimated precisely through explicit calculations~\cite[Th.6]{BL13}. For our purpose, we shall only use the existence of a constant $c_2>\sqrt{2}$ such that, for every $2\leq q<\infty$, $1\leq m_{F_q}\leq c_2\sqrt{q}$~\cite[Th.4]{BL13}. In particular, there exists a constant $c_1>0$ such that, for every $\Lambda\geq 2c_2\sqrt{q}$, for every $\ell\geq 1$ and for every $2\leq q<\infty$, one has
\begin{equation}\label{eq:concentration-Lq}
\nu_{2\ell}\left(\left\{u\in S_{\ell}:\ \|u\|_{L^q}\geq \Lambda\right\}\right)\leq 2e^{-c_1(\Lambda-c_2\sqrt{q})^2\ell^{\frac{2}{q}}}.
\end{equation}

This quantitative estimate will be useful in our construction of a good orthonormal basis. Yet, besides these already known results, we will also need to apply Levy's inequality one more time directly to the integrals we are interested in. In order to clarify the upcoming argument, let us give another simple application of Levy's inequality that will be in the spirit of our proof. We fix some $h\in L^2(\mathbb{S}^2)$ and we consider the map
$$
F:u\in S_\ell\mapsto \int_{\mathbb{S}^2}u(x)h(x) \, \mathrm{d}\operatorname{vol}_{\mathbb{S}^2}(x)
$$
By symmetry, the median of this function is equal to $0$ and one has, thanks to the Cauchy--Schwarz inequality,
$$
|F(u)-F(v)|\leq \|u-v\|_{L^2} \|h\|_{L^2}\leq  c_0 \|h\|_{L^2} d_{\mathbb{S}^{2\ell}}(u,v).
$$
Hence, we deduce from Levy's inequality applied with $\delta=\frac{\log\langle \ell\rangle}{\sqrt{\langle\ell\rangle}}$ that
$$
\nu_{2\ell}\left(\left\{u\in S_{\ell}:\ |F(u)|\geq \frac{\log\langle \ell\rangle}{\sqrt{\langle\ell\rangle}}\right\}\right)\leq 2e^{-c_1\frac{\log^2\langle \ell\rangle}{\|h\|_{L^2}}}.
$$
From that, we infer that
$$
\sum_{k=(\ell,m)\in\mathcal{T}_\infty}\mathbb{P}\left(\left\{b\in\mathcal{B}: \left|\int_{\mathbb{S}^2}e_k(x)h(x) \, \mathrm{d}\operatorname{vol}_{\mathbb{S}^2}(x)\right|\geq \frac{\log\langle \ell\rangle}{\sqrt{\langle\ell\rangle}}\right\}\right)\leq 2\sum_{\ell\in\mathbb{N}}(2\ell+1)e^{-c_1\frac{\log^2\langle \ell\rangle}{\|h\|_{L^2}}}<\infty.
$$
In particular, thanks to the Borel Cantelli Lemma, we can derive that, given $h\in L^2$ and for $\mathbb{P}$-a.e. $b\in\mathcal{B}$, there exists a constant $C_b>0$ such that
$$
\forall k\in\mathcal{T}_\infty,\quad\left|\int_{\mathbb{S}^2}e_k(x)h(x) \, \mathrm{d}\operatorname{vol}_{\mathbb{S}^2}(x)\right|\leq C_b \frac{\log(1+\langle\ell\rangle)}{\sqrt{\langle\ell\rangle}}
$$
This is exactly the kind of decay we are looking for in Theorem~\ref{t:proba} except that $h$ is a product of eigenfunctions inside $b$ (rather than a fixed element $h$ in $L^2$). In order to handle this problem, we will make use of the fact that most eigenfunctions have their $L^q$-norm uniformly bounded and that this control on the $L^q$-norm can be made quantitative thanks to~\eqref{eq:concentration-Lq}. Due to the multiple and nested applications of Levy's inequality, this turns out to be a little bit tedious task. Yet, the decay phenomenon we obtain is the same as the one we have just described in this elementary calculation.

\subsection{Proof of Theorem~\ref{t:proba}} For the sake of simplicity, it is convenient to endow $\mathcal{T}_\infty$ with the lexicographic order, namely
\begin{equation}
\label{eq:order-relation}
k_1=(\ell_1,m_1)\preccurlyeq k_2=(\ell_2,m_2)\quad\Longleftrightarrow\quad \ell_1< \ell_2\ \text{or}\ (\ell_1=\ell_2\ \text{and}\ m_1\leq m_2).
\end{equation}

 We now will estimate the probability that an orthonormal basis in $\mathcal{B}$ does not satisfy the conclusion of Theorem~\ref{t:proba} for a fixed $\mathbf{k}=(k_1,\ldots, k_p)\in\mathcal{T}_\infty^p$ with
\begin{equation}\label{eq:ordered-index}
 k_1=(\ell_1,m_1)\preccurlyeq\ldots\preccurlyeq k_p=(\ell_p,m_p).
\end{equation}
Indeed, since the estimate of Theorem~\ref{t:proba} is invariant by the action of the permutation group on $\mathbf{k}$, we can can assume without loss of generality that $k_1,\ldots, k_p$ are ordered.

In order to alleviate the notations, we also define 
$$A(\mathbf{k}):=\{ k\in\mathcal{T}_{\infty}:\ \exists 1\leq j\leq p\ \text{such that}\ k=k_j\},$$
which is set of cardinal $\leq p$ so that
$$F_{\mathbf{k}}(b):=\int_{\mathbb{S}^2} e_{k_1}(x) \cdots e_{k_p}(x) \, g(x) \, \mathrm{d}\operatorname{vol}_{\mathbb{S}^2}(x)=\int_{\mathbb{S}^2} \prod_{k\in A(\mathbf{k})}e_{k}(x)^{\alpha_k} \, g(x) \, \mathrm{d}\operatorname{vol}_{\mathbb{S}^2}(x),$$
where $1\leq \alpha_k\leq p$ for every $k\in A(\mathbf{k}).$ We always suppose in the following that $g$ is not identically $0$.

\subsubsection{Applying Levy's inequality} 
We suppose that there exists $1\leq j_0\leq p$ such that
$$(\ell_j,m_j)=(\ell_{j_0},m_{j_0})\ \Longrightarrow\ j=j_0.$$
In that case, we say that $\mathbf{k}$ satisfies property $(S)$. We denote by $j_+$ the largest index in $\{1,\ldots,p\}$ with this property. In particular $\alpha_{(\ell_{j_+},m_{j_+})}=1$. We begin by treating the case of multi-indices verifying $(S)$ and we also suppose for the moment that $\ell_{j_+}\geq p$.

Following the above calculation, we aim at applying Levy's inequality to the map
$$F_+:e_{(\ell_{j_+},m_{j_+})}\in S_{\ell_{j_+}}\mapsto \int_{\mathbb{S}^2} e_{k_1}(x) \cdots e_{k_p}(x) \, g(x) \, \mathrm{d}\operatorname{vol}_{\mathbb{S}^2}(x),$$
with $(e_{k_j})_{1\leq j\neq j_+\leq p}$ fixed. By symmetry, the median $m_{F_+}$ of $F_+$ is equal to $0$. Moreover, by the H\"older inequality, this is a Lipschitz map:
\begin{eqnarray*}
|F_+(u)-F_+(v)| &\leq & \|g\|_{L^{\infty}}\|u-v\|_{L^2}  \left(\int_{\mathbb{S}^2}\prod_{j\neq j_+} |e_{k_j}(x)|^2 \mathrm{d}\operatorname{vol}_{\mathbb{S}^2}(x)\right)^{\frac{1}{2}}\\
&\leq & c_0\|g\|_{L^{\infty}}d_{\mathbb{S}^2}(u,v)  \left(\int_{\mathbb{S}^2}\prod_{j\neq j_+} |e_{k_j}(x)|^2 \mathrm{d}\operatorname{vol}_{\mathbb{S}^2}(x)\right)^{\frac{1}{2}}\\
&\leq & c_0\|g\|_{L^{\infty}}d_{\mathbb{S}^2}(u,v) \prod_{k\in A(\mathbf{k})\setminus \{k_{j_+}\}}\|e_{k}\|_{L^{2(p-1)}}^{\alpha_k}.
\end{eqnarray*}
\begin{remark}\label{r:compose-rotation}
 Note that these two properties also hold true for $F_+\circ R$ where $R\in O(E_{\ell_{j_+}}).$
\end{remark}

In order to apply Levy's inequality, we would at least need that the $L^{2p-2}$-norms appearing in the Lipschitz constant are uniformly bounded. To that aim, we set, for $\Lambda>0$,
$$B_{\Lambda}(\mathbf{k}):=\left\{b\in\mathcal{B}:\ \forall k\in A(\mathbf{k})\setminus \{k_{j_+}\}, \|\pi_{k}(b)\|_{L^{2(p-1)}}\leq\Lambda\right\}.$$
In particular, for $b\in B_{\Lambda}(\mathbf{k})$, the Lipschitz constant of $F_+$ is bounded by $c_0\|g\|_{L^{\infty}}\Lambda^{p-1}$. Moreover, using~\eqref{eq:concentration-Lq}, one finds that the complementary set of $B_{\Lambda}(\mathbf{k})$ is small. More precisely, for $\Lambda\geq 4c_2\sqrt{p}$, one has
\begin{eqnarray*}
\mathbb{P}\left(B_{\Lambda}(\mathbf{k})^c\right)&\leq&\sum_{k\in A(\mathbf{k})\setminus \{k_{j_+}\}}\mathbb{P}\left(\left\{b\in\mathcal{B}:  \|\pi_k(b)\|_{L^{2p-2}(\mathbb{S}^2)}\geq \Lambda\right\}\right)\\
&\leq &2\sum_{j=1,j\neq j_+}^pe^{-c_1(\Lambda-2c_2\sqrt{p})^2\ell_j^{\frac{1}{p-1}}}.
\end{eqnarray*}

Fix now some $\Lambda\geq 4c_2\sqrt{p}$ and some $\delta>0$. For $L\subset\mathbb{N}$, we set 
$$B_{\Lambda,L}(\mathbf{k}):=\left\{b\in\mathcal{B}_L:\ \forall k\in L\times\mathbb{Z}\cap\left( A(\mathbf{k})\setminus \{k_{j_+}\}\right), \|\pi_{k}(b)\|_{L^{2(p-1)}}\leq\Lambda\right\}$$
so that we can write
\begin{eqnarray*}
\mathbb{P}\left(\left\{b\in\mathcal{B}:\ |F_{\mathbf{k}}(b)|\geq \delta\right\}\right)&\leq & \mathbb{P}\left(\left\{b\in B_{\Lambda,\mathbb{N}}(\mathbf{k}):\ |F_{\mathbf{k}}(b)|\geq \delta\right\}\right)+2\sum_{j=1,j\neq j_+}^pe^{-c_1(\Lambda-2c_2\sqrt{p})^2\ell_j^{\frac{1}{p-1}}}\\
&\leq & \int_{  B_{\Lambda,\mathbb{N}\setminus\{\ell_{j_+}\}}(\mathbf{k}) }
\mathbb{P}_{\ell_{j_+}}\left(\left\{b_{\ell_{j_+}}\in B_{\Lambda,\ell_{j_+}}(\mathbf{k}): |F_{\mathbf{k}}(b',b_{\ell_{j_+}})| \geq\delta\right\}\right) \mathrm{d}\mathbb{P}_{\mathbb{N}\setminus\{\ell_{j_+}\}}(b')\\
&+&2\sum_{j=1,j\neq j_+}^pe^{-c_1(\Lambda-2c_2\sqrt{p})^2\ell_j^{\frac{1}{p-1}}}.
\end{eqnarray*}
Hence, $b'$ being fixed in $\mathcal{B}_{\mathbb{N}\setminus\{\ell_{j_+}\}}$, we are left with estimating, uniformly for $b'\in B_{\Lambda,\mathbb{N}\setminus\{\ell_{j_+}\}}(\mathbf{k})$,
\begin{equation}\label{eq:proba-smallsphere}
\mathbb{P}_{\ell_{j_+}}\left(\left\{b_{\ell_{j_+}}\in B_{\Lambda,\ell_{j_+}}(\mathbf{k}): |F_{\mathbf{k}}(b',b_{\ell_{j_+}})| \geq\delta\right\}\right),
\end{equation}
which can be analyzed using~\eqref{eq:desintegration}. Expressed in terms of the orthogonal group of $E_{\ell_{j_+}}$,~\eqref{eq:proba-smallsphere} can in fact be rewritten as
\begin{equation}\label{eq:proba-smallsphere-Haar}
\mathfrak{m}_{O(E_{\ell_{j_+}})}\left(\left\{R : \left(R\Phi_{\ell_{j_+},m}\right)_{m}\in B_{\Lambda,\ell_{j_+}}(\mathbf{k}),\ \text{and}\ \left|F_{\mathbf{k}}\left(b',\left(R\Phi_{\ell_{j_+},m}\right)_{m}\right)\right| \geq\delta\right\}\right).
\end{equation}
We are now exactly in position to apply the disintegration formula~\eqref{eq:desintegration} with $\ell=\ell_{j_+}$, $m_+=m_{j_+}$ and
$$
\mathcal{M}=\left\{k=(\ell,m)\notin A(\mathbf{k}): \ell=\ell_{j_+}\right\}\cup\{(\ell_{j_+},m_{j_+})\},
$$
where we note that $|\mathcal{M}|\geq 2(\ell_{j_+}+1)-p$. From this and as the condition on $B_{\Lambda,\ell_{j_+}}(\mathbf{k})$ only concerns indices $m$ not belonging to $\mathcal{M}$, we infer that~\eqref{eq:proba-smallsphere-Haar} (and thus~\eqref{eq:proba-smallsphere}) can be rewritten as
\begin{multline}
\int_{O(E_{\ell_{j_+}})/O(V_{\ell_{j_+},\mathcal{M}})}\mathbbm{1}_{\{[R]: (R\Phi_{\ell_{j_+},m})_{m}\in B_{\Lambda,\ell_{j_+}}(\mathbf{k})\}}([R])\\
\times\mathfrak{m}_{O(V_{\ell_{j_+},\mathcal{M}})}\left(\left\{R_1 :\left|F_{\mathbf{k}}\left(b',\left(RR_1\Phi_{\ell_{j_+},m}\right)_{m}\right)\right| \geq\delta\right\}\right)\mathrm{d}\mu_{O(E_{\ell_{j_+}})/O(V_{\ell_{j_+},M})}([R]).
\end{multline}
In order to estimate~\eqref{eq:proba-smallsphere} and thus $\mathbb{P}\left(\left\{b\in\mathcal{B}:\ |F_{\mathbf{k}}(b)|\geq \delta\right\}\right)$, we are left with determining an upper bound on 
$$\mathfrak{m}_{O(V_{\ell_{j_+},\mathcal{M}})}\left(\left\{R_1 :\left|F_{\mathbf{k}}\left(b',\left(RR_1\Phi_{\ell_{j_+},m}\right)_{m}\right)\right| \geq\delta\right\}\right),$$
uniformly for $b'\in B_{\Lambda,\mathbb{N}\setminus\{\ell_{j_+}\}}(\mathbf{k})$ and for $[R]$ such that $(R\Phi_{\ell_{j_+},m})_{m}\in B_{\Lambda,\ell_{j_+}}(\mathbf{k})$. Equivalently, as in~\eqref{eq:induced-measure-sphere}, one gets in terms of measures on spheres
$$\mathfrak{m}_{O(V_{\ell_{j_+},\mathcal{M}})}\left(\left\{R_1 :\left|F_{\mathbf{k}}\left(b',\left(RR_1\Phi_{\ell_{j_+},m}\right)_{m}\right)\right| \geq\delta\right\}\right)=\mathbb{\nu}_{|\mathcal{M}|-1}\left(\left\{u\in \mathbb{S}^{|\mathcal{M}|-1}: |F_{+}(Ru)| \geq\delta\right\}\right),$$
where $R$ is a fixed element in $E_{\ell_{j_+}}$ and where the function $F_+$ is defined using a fixed orthonormal family $\{e_{k_j}:1\leq j\neq j_+\leq p\}$ verifying $ \|e_{k_j}\|_{L^{2(p-1)}}\leq\Lambda$ for every $j\neq j_+$. Hence, using Levy's inequality and recalling from Remark~\ref{r:compose-rotation} that $F_+\circ R$ is Lipschitz and that its median is $0$, we obtain
$$\mathfrak{m}_{O(V_{\ell_{j_+},\mathcal{M}})}\left(\left\{R_1 :\left|F_{\mathbf{k}}\left(b',\left(RR_1\Phi_{\ell_{j_+},m}\right)_{m}\right)\right| \geq\delta\right\}\right)\leq 2e^{-\delta^2\frac{|\mathcal{M}|-2}{c_0^2\|g\|_{L^{\infty}}^2\Lambda^{2p-2}}}.$$
Gathering these bounds, we get
$$
\mathbb{P}\left(\left\{b\in\mathcal{B}:\ |F_{\mathbf{k}}(b)|\geq \delta\right\}\right) \leq 2e^{-\delta^2\frac{|\mathcal{M}|-2}{c_0^2\|g\|_{L^{\infty}}^2\Lambda^{2p-2}}}+2\sum_{j=1,j\neq j_+}^pe^{-c_1(\Lambda-2c_2\sqrt{p})^2\ell_j^{\frac{1}{p-1}}}.
$$
Note that, for $\ell_{j_+}\geq p$, one has $|\mathcal{M}|-2\geq 2\ell_{j_+}-p\geq \ell_{j_+}$.

In summary, we end up with the existence of two positive constants $c_1,c_2>0$ (depending only on $g$, on $p$ and on the geometry of $\mathbb{S}^2$) such that, for every $\delta>0$ and for every $\Lambda\geq 4c_2\sqrt{p}$ 
\begin{equation}\label{e:proof-proba-harder}
 \mathbb{P}\left(\left\{b\in\mathcal{B}:\ |F_{\mathbf{k}}(b)|\geq \delta\right\}\right) \leq 2e^{-c_1\frac{\delta^2 \ell_{j_+}}{\Lambda^{2p-2}}}+2\sum_{j=1,j\neq j_+}^pe^{-c_1(\Lambda-2c_2\sqrt{p})^2\ell_j^{\frac{1}{p-1}}},
\end{equation}
whenever $\mathbf{k}$ verifies $(S)$ and $\ell_{j_+}\geq p$. Taking $\Lambda=\log\langle\ell_p\rangle$ (and thus $\ell_p$ large enough), we can deduce the existence of a constant $c_{p,g}\geq 1$ such that, for every $\delta>0$ and for every $\mathbf{k}\in\mathcal{T}_\infty^p$ with $k_1\preccurlyeq\ldots\preccurlyeq k_p=(\ell_p,m_p)$ verifying $(S)$,
$$
 \mathbb{P}\left(\left\{b\in\mathcal{B}:\ |F_{\mathbf{k}}(b)|\geq \delta\right\}\right) \leq c_{p,g}e^{-c_{p,g}^{-1}\frac{\delta^2 \langle\ell_{j_+}\rangle}{\log^{2(p-1)}\langle\ell_p\rangle}}+c_{p,g}e^{-c_{p,g}^{-1}\log^{2}\langle\ell_p\rangle}.
$$
Thus, we obtain
\begin{equation}\label{e:proof-proba-harder-crude}
 \mathbb{P}\left(\left\{b\in\mathcal{B}:\ |F_{\mathbf{k}}(b)|\geq \frac{\log^{p}\langle\ell_p\rangle}{\sqrt{\langle\ell_{j_+}\rangle}}\right\}\right) \leq 2c_{p,g}e^{-c_{p,g}^{-1}\log^{2}\langle\ell_p\rangle},
\end{equation}

\subsubsection{The conclusion} Given $\mathbf{k}\in\mathcal{T}_\infty^p$ with $k_1\preccurlyeq\ldots\preccurlyeq k_p=(\ell_p,m_p)$ verifying property $(S)$ and $\ell_{j+}\geq p$, we define the following probabilistic events:
$$\Omega(\mathbf{k}):=\left\{b\in\mathcal{B}:\ |F_{\mathbf{k}}(b)|\geq \frac{\log^{p}\langle\ell_p\rangle}{\sqrt{\langle\ell_{j_+}\rangle}}\right\}.$$
Applying~\eqref{e:proof-proba-harder-crude}, one has
$$\sum_{k_1\preccurlyeq\ldots\preccurlyeq k_r: (S)\ \text{holds and $\ell_{j+}\geq p$}}\mathbb{P}(\Omega(\mathbf{k}))\leq C_{p,g}\sum_{\ell=1}^{+\infty}\ell^{2p} e^{-C_{p,g}^{-1}\log^2\langle \ell\rangle}<\infty.$$
In particular, thanks to the Borel-Cantelli Lemma, we can conclude that, for $\mathbb{P}$-a.e. $b\in\mathcal{B}$, one has $b\in\Omega(\mathbf{k})^c$ except for finitely many $\mathbf{k}$ verifying $(S)$ and $\ell_{j+}\geq p$. This yields the conclusion of the Theorem for indices verifying these two properties. Recall now from\footnote{This is in fact a rather direct consequence of~\eqref{eq:concentration-Lq} combined with H\"older's inequality and the Borel-Cantelli Lemma.}~\cite[Th.6]{BL13} that, for $\mathbb{P}$-a.e. $b\in\mathcal{B}$, there exists a constant $C_b>0$ such that, for every $\mathbf{k}=(k_1,\ldots,k_p)\in\mathcal{T}_\infty^p$,
$$
\left| \int_{\mathbb{S}^2} e_{k_1}(x) \cdots e_{k_p}(x) \, g(x) \, \mathrm{d}\mathrm{vol}_{\mathbb{S}^2}(x) \right| \leq C_{b}.
$$
This last inequality yields the conclusion of the theorem whenever $\mathbf{k}$ does not satisfy $(S)$ or $\ell_{j+}\geq p$. Hence, taking an element in the intersection of these two subsets of full measure concludes the proof of Theorem~\ref{t:proba}.


\begin{remark}
 We note that we proved something slightly stronger than what was stated in Theorem~\ref{t:proba} as the conclusion holds true for $\mathbb{P}$-a.e. orthonormal basis in $\mathcal{B}$ (with a constant that depends on the choice of $b$).
\end{remark}

\section{A good mass}
\label{sec:mass}
 In this section, we prove that, for almost all mass $\mu >0$, the frequencies of \eqref{eq:KG} are non-resonant and thus well-suited to proceed to a Birkhoff normal form reduction. The frequencies of \eqref{eq:KG} are defined by
\begin{equation}
\label{eq:def_omega}
\forall k=(\ell,m) \in \mathcal{T}_\infty, \quad \omega_k := \sqrt{\ell(\ell+1) + \mu}. 
\end{equation}
They are the eigenvalues of the operator $\sqrt{\mu - \Delta}$ (see \eqref{eq:lapdiag}).


The Birkhoff normal form process involves small divisors of the form 
\begin{equation}
\label{eq:defOmega}
\Omega(\sigma, \mathbf{k}) = \sigma_1 \omega_{k_1} + \cdots + \sigma_r \omega_{k_r}
\end{equation}
 with $r\geq 3$, $\sigma \in \{-1,1\}^r$ and $\mathbf{k} \in \mathcal{T}_\infty^r$. Of course there may be cancellations in these small divisors (a same term could appear both with a sign plus and a sign minus). Therefore it is useful to define the \emph{smallest effective index} by
 \begin{equation}
 \label{eq:defkappa}
 \kappa(\sigma,\mathbf{k}) = \min \big\{ \ \langle \ell_j \rangle \quad | \quad 1\leq  j \leq r \quad \mathrm{and} \quad \ \sum_{\ell_i = \ell_j} \sigma_i \neq 0 \ \big\} \cup \{+\infty\}.
 \end{equation}
 where, for all $i\in \llbracket 1,r \rrbracket$, we have set $(\ell_i,m_i) := k_i$. The following proposition provides a quite uniform lower bound for the small divisors of \eqref{eq:KG}.
\begin{proposition} \label{prop:strong_nr} For almost all $\mu>0$ and all $r\geq 2$, there exist $\gamma_r, \alpha_r>0$ such that for all $\mathbf{k}\in \mathcal{T}_\infty^r$, all $\sigma \in \{-1,1\}^r$, we have either
\begin{equation}
\label{eq:strong_nr}
|\Omega(\sigma, \mathbf{k})|\geq \gamma_r \kappa(\sigma,\mathbf{k})^{-\alpha_r} 
\end{equation}
or $\kappa(\sigma,\mathbf{k}) = +\infty$, i.e. $r$ is even and there exists $\rho$ in the symmetric group $ \mathfrak{S}_{r}$ such that
$$
\forall j \in \llbracket 1,r/2 \rrbracket, \quad \sigma_{\rho_{2j-1}}= -\sigma_{\rho_{2j}} \quad \mathrm{and} \quad \omega_{k_{\rho_{2j-1}}}=\omega_{k_{\rho_{2j}}}.
$$ 
Moreover, $\alpha_r$ does not depends on $\mu$.
\end{proposition}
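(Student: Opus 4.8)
The statement is a Diophantine lower bound for the linear combinations $\Omega(\sigma,\mathbf{k}) = \sum_j \sigma_j \omega_{k_j}$ with $\omega_k = \sqrt{\ell(\ell+1)+\mu}$, and the excerpt essentially tells us that it should follow from \cite[Prop.~2.1]{BG21}. So the plan is first to reduce the problem to a genuinely one-parameter family of frequencies indexed by $\ell$ only, and then to check that this family falls into the abstract framework of that reference. The key observation is that $\omega_k$ depends on $k=(\ell,m)$ only through $\ell$; hence, after regrouping, any small divisor $\Omega(\sigma,\mathbf{k})$ can be written as $\sum_{\ell} n_\ell \, \lambda_\ell(\mu)$ where $\lambda_\ell(\mu) := \sqrt{\ell(\ell+1)+\mu}$, the $n_\ell \in \mathbb{Z}$ are integers obtained by summing the signs $\sigma_i$ over the indices $i$ with $\ell_i = \ell$, and $\sum_\ell |n_\ell| \leq r$. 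The definition \eqref{eq:defkappa} of $\kappa(\sigma,\mathbf{k})$ is precisely $\min\{\langle\ell\rangle : n_\ell \neq 0\}$ (or $+\infty$ if all $n_\ell$ vanish), so the conclusion we must prove is: for a.e. $\mu$, there exist $\gamma_r,\alpha_r>0$ such that for every finitely supported $(n_\ell)_{\ell\in\mathbb{N}} \in \mathbb{Z}^{\mathbb{N}}$ with $\sum |n_\ell| \leq r$ and not identically zero, one has $|\sum_\ell n_\ell \lambda_\ell(\mu)| \geq \gamma_r \, (\min\{\langle\ell\rangle : n_\ell\neq 0\})^{-\alpha_r}$.

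\textbf{Verifying the hypotheses of the abstract result.} The next step is to check that $\mu \mapsto (\lambda_\ell(\mu))_\ell$ is a non-degenerate, analytically varying family of frequencies in the sense required by \cite[Prop.~2.1]{BG21}. The functions $\mu \mapsto \lambda_\ell(\mu) = \sqrt{\ell(\ell+1)+\mu}$ are real-analytic on $(0,\infty)$. Two features must be recorded: (i) an \emph{asymptotic/separation} statement — $\lambda_\ell(\mu) = \ell + \tfrac12 + O(1/\ell)$ uniformly for $\mu$ in compact sets, so the frequencies accumulate polynomially fast on the affine lattice $\mathbb{Z}+\tfrac12$, which is exactly the structure \cite{BG21} exploits; and (ii) a \emph{non-degeneracy} (transversality) statement — for any finite set of distinct indices $\ell^{(1)} < \cdots < \ell^{(q)}$ and any $(n_1,\ldots,n_q)\in\mathbb{Z}^q\setminus\{0\}$, the function $\mu \mapsto \sum_j n_j \lambda_{\ell^{(j)}}(\mu)$ is not identically zero. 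Claim (ii) follows because the derivatives $\lambda_\ell^{(m)}(\mu)$ have sizes that are strictly ordered in $\ell$ (for instance $\lambda_\ell'(\mu) = \tfrac12(\ell(\ell+1)+\mu)^{-1/2} \approx \ell^{-1}$, so the $\lambda_\ell'$ are pairwise distinct for distinct $\ell$ and large enough, and an analyticity/Wronskian argument upgrades this to linear independence of the family $\{\lambda_\ell\}$ over $\mathbb{Q}$ as functions of $\mu$); alternatively one argues directly that a relation $\sum_j n_j \sqrt{\ell^{(j)}(\ell^{(j)}+1)+\mu} \equiv 0$ forces, upon squaring and iterating, all $n_j=0$. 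Once (i) and (ii) are in place, \cite[Prop.~2.1]{BG21} applies verbatim and yields, for a.e. $\mu$, the desired lower bound with an exponent $\alpha_r$ that is produced by the abstract statement and in particular does not depend on $\mu$ (it is determined by $r$, the number of derivatives used, and the polynomial accumulation rate, all of which are $\mu$-independent).

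\textbf{Handling the resonant case and conclusion.} It remains to interpret the alternative $\kappa(\sigma,\mathbf{k}) = +\infty$. By definition this means $n_\ell = 0$ for every $\ell$, i.e. $\sum_{i:\ell_i=\ell}\sigma_i = 0$ for each $\ell$; since each such block has an equal number of $+1$'s and $-1$'s, one can pair them off within each block, which gives $r$ even and a permutation $\rho\in\mathfrak{S}_r$ with $\sigma_{\rho_{2j-1}} = -\sigma_{\rho_{2j}}$ and $\ell_{\rho_{2j-1}} = \ell_{\rho_{2j}}$, hence $\omega_{k_{\rho_{2j-1}}} = \omega_{k_{\rho_{2j}}}$ — exactly the stated description of the exceptional (``trivially resonant'') terms, for which $\Omega(\sigma,\mathbf{k}) = 0$ identically in $\mu$ and no lower bound can hold. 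Finally, one takes the intersection over $r\in\mathbb{N}$ of the full-measure sets of good masses produced above (a countable intersection of sets of full measure has full measure) so that a single a.e.\ $\mu$ works for all $r$ simultaneously, as in the statement. \textbf{Main obstacle.} The only substantive point is the non-degeneracy/transversality verification (ii): one must be sure that \emph{no} nontrivial integer combination of the $\lambda_\ell(\mu)$ vanishes on a set of positive measure, so that the "bad mass" set for each fixed $(n_\ell)$ is null and a union bound (organized by $\kappa$, with the polynomial accumulation controlling the combinatorial count) closes. Making that union bound summable with a $\mu$-uniform exponent is precisely what \cite[Prop.~2.1]{BG21} packages, so the real work is checking its hypotheses rather than reproving it.
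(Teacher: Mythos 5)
Your overall strategy matches the paper's: reduce the frequencies to the one--parameter family $\lambda_\ell(\mu)=\sqrt{\ell(\ell+1)+\mu}$, note the polynomial accumulation on the affine lattice $\mathbb{Z}+\tfrac12$, invoke \cite[Prop.~2.1]{BG21}, unpack the meaning of $\kappa=+\infty$ as a perfect pairing, and close with a countable intersection over $r$. Those pieces are all correct and the last two are spelled out with more care than the paper bothers to give.

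Where you diverge from the paper is in what you feed to \cite[Prop.~2.1]{BG21}. You identify the second ingredient as a \emph{qualitative} transversality statement — no nontrivial $\mathbb{Z}$-combination $\sum_j n_j\lambda_{\ell^{(j)}}(\mu)$ vanishes identically — and claim the measure argument plus union bound is ``precisely what \cite[Prop.~2.1]{BG21} packages.'' The paper says something stronger: the second ingredient is the \emph{quantitative} classical non-resonance estimate of Delort--Szeftel and Bambusi, namely that for a.e.\ $\mu$ there exist $\gamma_r,\alpha_r>0$ with
$$
\forall y\in\mathbb{Z},\quad \Big|\frac{y}{2}+\Omega(\sigma,\mathbf{k})\Big|\geq\gamma_r\,\big(\max_{j}\langle k_j\rangle\big)^{-\alpha_r},
$$
cited from \cite[Prop.~4.8]{DS04} and \cite[Th.~6.5]{Bam03}. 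The role of \cite[Prop.~2.1]{BG21} is \emph{not} to perform the transversality-plus-measure argument — that is already done in the cited references — but to \emph{upgrade} this ``weak'' estimate (controlled by the largest index and allowing half-integer shifts $y/2$) to the ``strong'' estimate \eqref{eq:strong_nr} controlled by the smallest effective index $\kappa$, using the lattice accumulation. Two concrete consequences of this misidentification: (a) your hypothesis list omits the shift by $y/2$, which is essential — without it the bootstrap to the $\kappa$-bound fails because large-$\ell$ terms contribute quantities near $\mathbb{Z}+\tfrac12$ that must themselves be controlled; and (b) if you were to verify hypotheses literally as you have stated them, you would be proving a different (and insufficient) input than what \cite[Prop.~2.1]{BG21} actually requires. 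The transversality discussion you give is not wasted — it sits inside the proof of the Delort--Szeftel/Bambusi estimate — but it is not the black box's hypothesis, and citing the classical quantitative estimate (rather than re-deriving a qualitative precursor of it) is the cleaner and, as it turns out, the correct reading of the reference.
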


As already explained in the introduction, the key observation here is that the small divisors that will appear in our normal formal procedure (see the proof of Theorem~\ref{thm_NF}) are controlled by the smallest effective index rather than the third largest index as for instance in~\cite[Prop.~3.16]{BDGS07}. This will allow us to remove much more terms when solving cohomological equations.
\begin{proof} First we note that the frequencies accumulates polynomially fast on lattice $\mathbb{Z}+\frac12$ :
$$
\omega_{(\ell,m)} = \sqrt{\ell (\ell+1) + \mu } = \ell \sqrt{1+ \frac1\ell + \frac{\mu}{\ell^{2}}} \mathop{=}_{\ell \to + \infty} \ell + \frac12 + \mathcal{O}\big(\frac1\ell\big).
$$
Moreover, it is well known (see e.g.~\cite[Prop.~4.8]{DS04} and \cite[Th.~6.5]{Bam03}) that Proposition \ref{prop:strong_nr} holds if \eqref{eq:strong_nr} is replaced by the weaker estimate
$$
\forall y\in \mathbb{Z}, \quad \big|\frac{y}2+\Omega(\sigma, \mathbf{k})\big|\geq \gamma_r \, \big(\max_{j=1}^r \langle k_j \rangle\big)^{-\alpha_r} 
$$
Therefore, Proposition \ref{prop:strong_nr} is a consequence of \cite[Prop.~2.1, p.11]{BG21} which only requires the two above ingredients.
\end{proof}

\section{Hamiltonian formalism}  
\label{sec:hamfo}

We now introduce new families of norms on real-valued and homogeneous polynomials on $\mathbb{C}^{\mathcal{T}_M}$ that are well behaved with respect to the canonical symplectic structure on $\mathbb{C}^{\mathcal{T}_M}$ and thus well adapted to our initial PDE problem after diagonalization of $\Delta$.

\subsection{Functional setting} We use the standard functional setting to deal with Hamiltonian systems. Nevertheless to avoid any possible confusion we recall it precisely (and we refer to section 3.1 of \cite{BG21} for more comments and details).

We consider $M\in (0,\infty)$ as a fixed parameter and we note that $\mathbb{C}^{\mathcal{T}_M}$ is a real finite dimensional vector space. We always consider this space as an Euclidean space for the $\ell^2$ scalar product
$$
\forall u,v \in \mathbb{C}^{\mathcal{T}_M}, \quad (u,v)_{\ell^2} := \Re \sum_{k\in \mathcal{T}_M} u_k \overline{v_k}.
$$ 
As a consequence, if $H: \mathbb{C}^{\mathcal{T}_M} \to \mathbb{R}$, we have the relation
$$
\forall k\in \mathcal{T}_M, \quad \frac{(\nabla H)_k}2 = \partial_{\overline{u_k}} H=: \frac{1}{2}\left(\partial_{\Re u_k} H + i  \partial_{\Im u_k} H\right).
$$
As usual, we equip implicitly $\mathbb{C}^{\mathcal{T}_M}$ with the symplectic form $( i \, \cdot \, , \, \cdot \,)_{\ell^2}$. Therefore a smooth map $\tau : \mathcal{D} \to \mathbb{C}^{\mathcal{T}_M}$, where $\mathcal{D}$ is an open set of $\mathbb{C}^{\mathcal{T}_M}$, is \emph{symplectic} if
$$
\forall u \in \mathcal{D}, \forall v,w \in \mathbb{C}^{\mathcal{T}_M}, \ (iv,w)_{\ell^2} = (i\mathrm{d}\tau(u)(v),\mathrm{d}\tau(u)(w))_{\ell^2}.
$$
Moreover, if $H,K :\mathbb{C}^{\mathcal{T}_M} \to \mathbb{R}$ are two smooth functions, the \emph{Poisson bracket} of $H$ and $K$ is defined by
$$
\{  H,K\}(u):= (i \nabla H(u),\nabla K(u))_{\ell^2}.
$$
Note that, as usual, it can be checked that we have 
$$
\{  H,K\}    =\sum_{k\in \mathcal{T}_M} \partial_{\Re u_k}H \partial_{\Im u_k} K - \partial_{\Im u_k}H \partial_{\Re u_k} K=2i \sum_{k\in \mathcal{T}_M} \partial_{\overline{u_k}}H \partial_{u_k} K - \partial_{u_k}H \partial_{\overline{u_k}} K.
  $$
For all $s\in \mathbb{R}$, we define the $h^s$ norm on $\mathbb{C}^{\mathcal{T}_M}$ by
$$
\forall u \in \mathbb{C}^{\mathcal{T}_M}, \quad \|u\|_{h^s}^2 := \sum_{k = (\ell,m)\in \mathcal{T}_M} \langle \ell \rangle^{2s} |u_k|^2
$$

\subsection{Multilinear estimates}

In this paragraph, we establish multilinear estimates for Hamiltonians which are homogeneous polynomials on $\mathbb{C}^{\mathcal{T}_M}$.
\begin{definition}[Space $\mathscr{H}_M^r$]\label{defH} Being given $M\geq 0$ and $r\geq 2$, $\mathscr{H}_M^r$ denotes the space of real valued homogeneous polynomial of degree $r$ on the real vector space $\mathbb{C}^{\mathcal{T}_M}$.
\end{definition}

\begin{remark} By definition, every homogeneous polynomial $H\in \mathscr{H}_M^r$ admits a unique decomposition of the form
\begin{equation*}
H(u) = \sum_{\sigma \in \{-1,1\}^r} \sum_{\mathbf{k}\in \mathcal{T}_M^r } H_{\mathbf{k}}^{\sigma} u_{k_1}^{\sigma_1} \dots u_{k_r}^{\sigma_r}
\end{equation*}
where $(H_{\mathbf{k}}^\sigma)_{ (\mathbf{k},\sigma)\in \mathcal{T}_M^r \times \{-1,1\}^r}$ is a sequence of complex numbers satisfying the reality condition
\begin{equation}
\label{eq:def_real}
H_{\mathbf{k}}^{-\sigma} = \overline{H_{\mathbf{k}}^{\sigma}}
\end{equation}
and the symmetry condition 
\begin{equation}
\label{eq:def_sym_cond}
\forall \phi \in \mathfrak{S}_r, \ H_{k_1,\dots,k_r}^{\sigma_1,\dots,\sigma_r} =  H_{k_{\phi_1},\dots,k_{\phi_r}}^{\sigma_{\phi_1},\dots,\sigma_{\phi_r}}.
\end{equation}
\end{remark}

We endow this space of polynomials with two unusual norms $\| \cdot \|_{\mathscr{H}}$ and $\| \cdot \|_{\mathscr{C}}$. Roughly speaking, in our Birkhoff normal form process, the terms of the Taylor expansion of the Hamiltonian are controlled with the $\mathscr{H}$-norm whereas the solutions to cohomological equations are controlled with a $\mathscr{C}$-norm (because they enjoy better properties).
\begin{definition}[Norms $\| \cdot \|_{\mathscr{H}}$ and $\| \cdot \|_{\mathscr{C}}$] Let $M\geq 0$, $r \geq 2$ and $H,\chi \in \mathscr{H}_M^r$, we set
\begin{equation}
\label{eq:norm_HM}
\| H \|_{\mathscr{H}} := \max_{\sigma \in \{-1,1\}^r } \max_{\mathbf{k} \in  \mathcal{T}_M^r } |H_{\mathbf{k}}^{\sigma}| \sqrt{\langle \ell_1 \rangle \cdots \langle \ell_r \rangle } \sqrt{\Upsilon(\mathbf{k})}
\end{equation}
and
\begin{equation}
\label{eq:norm_CMd}
\| \chi \|_{\mathscr{C}} := \max_{\sigma \in \{-1,1\}^r } \max_{\mathbf{k} \in  \mathcal{T}_M^r } |\chi_{\mathbf{k}}^{\sigma}| \langle \sigma_1 \ell_1 + \cdots + \sigma_r \ell_r \rangle \sqrt{\langle \ell_1 \rangle \cdots \langle \ell_r \rangle } \sqrt{ \Upsilon( \mathbf{k})}
\end{equation}
where $k_j =: (\ell_j,m_j)$ for all $j \in \llbracket 1,r\rrbracket$ and $\Upsilon$ is defined by \eqref{def:upsilon}.
\end{definition}

As we shall see in this section, these nonstandard norms are well behaved with the symplectic operations (Poisson bracket, gradient) that are used when performing a Birkhoff normal form procedure in Theorem~\ref{thm_NF}. One reason for these nice properties is the fact that they involve an extra regularity factor $\Upsilon( \mathbf{k})$ which only depends on the largest simple index $k_j=(\ell_j,m_j)$ of $\mathbf{k}$. Despite their unusual definition, these norms can be implemented in our normal form argument as this exponent appears naturally in the multilinear estimate of Theorem~\ref{t:proba}. See for instance~\eqref{eq:toutcapourca} below.

Let us now turn to the nice properties enjoyed by these norms. They provide the following continuity estimate for the Poisson bracket :
\begin{proposition}\label{poisson} Let $r,r'\geq 2$ and $M\geq 2$. For all $H\in \mathscr{H}_M^{r'}$ and all $\chi \in \mathscr{H}_{M}^r$, their Poisson bracket $ \{\chi,H\}$ is a homogeneous polynomial of degree $r+r'-2$ (i.e. $\{\chi,H\} \in \mathscr{H}_M^{r+r'-2}$) enjoying the bound 
$$
\| \{\chi,H\} \|_{\mathscr{H}} \lesssim_{r,r'} \log M  \ \| H\|_{\mathscr{H}} \|\chi\|_{\mathscr{C}}.
$$
\end{proposition}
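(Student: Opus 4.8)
The plan is to expand both $H$ and $\chi$ in their monomial bases and compute $\{\chi,H\}$ explicitly in coordinates, then bound each coefficient of the result. Recall that in our conventions
$$
\{\chi,H\} = 2i\sum_{k\in\mathcal{T}_M}\left(\partial_{\overline{u_k}}\chi\,\partial_{u_k}H - \partial_{u_k}\chi\,\partial_{\overline{u_k}}H\right),
$$
so if $\chi(u)=\sum_{\sigma,\mathbf{k}}\chi_{\mathbf{k}}^\sigma u_{k_1}^{\sigma_1}\cdots u_{k_r}^{\sigma_r}$ and $H(u)=\sum_{\tau,\mathbf{j}}H_{\mathbf{j}}^\tau u_{j_1}^{\tau_1}\cdots u_{j_{r'}}^{\tau_{r'}}$, then a monomial of $\{\chi,H\}$ is obtained by choosing one index $k_a$ of $\chi$ and one index $j_b$ of $H$ with $k_a=j_b$ and opposite signs $\sigma_a=-\tau_b$, contracting them, and keeping the remaining $r+r'-2$ indices. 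Thus a coefficient $\left(\{\chi,H\}\right)_{\mathbf{n}}^{\varsigma}$ with $\mathbf{n}=(n_1,\dots,n_{r+r'-2})$ is a sum, over all ways to split $\mathbf{n}$ into the ``$\chi$-part'' and the ``$H$-part'' and over all admissible contracted indices $k$, of terms of the form $c\,\chi_{\mathbf{k}}^\sigma H_{\mathbf{j}}^\tau$ with combinatorial constant $c$ depending only on $r,r'$.

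The core of the argument is then the per-term estimate. Fix such a term: write $\mathbf{k}=(k,\mathbf{n}',\text{-signs})$ on the $\chi$ side (with $|\mathbf{n}'|$ indices from $\mathbf{n}$) and $\mathbf{j}=(k,\mathbf{n}'',\text{-signs})$ on the $H$ side, where $\mathbf{n}=(\mathbf{n}',\mathbf{n}'')$ and $k=(\ell,m)$ is the contracted index. By definition of the norms,
$$
|\chi_{\mathbf{k}}^\sigma|\le \frac{\|\chi\|_{\mathscr{C}}}{\langle\sigma_a\ell+\sum_{i\in\mathbf{n}'}\text{sign}_i\langle\ell_i\rangle\rangle\ \langle\ell\rangle^{1/2}\prod_{i\in\mathbf{n}'}\langle\ell_i\rangle^{1/2}\ \sqrt{\Upsilon(\mathbf{k})}},\qquad
|H_{\mathbf{j}}^\tau|\le \frac{\|H\|_{\mathscr{H}}}{\langle\ell\rangle^{1/2}\prod_{i\in\mathbf{n}''}\langle\ell_i\rangle^{1/2}\ \sqrt{\Upsilon(\mathbf{j})}}.
$$
Multiplying, the factor $\prod_{i}\langle\ell_i\rangle^{1/2}$ over all of $\mathbf{n}$ comes out correctly, and we are left with the ``bad'' factor $\langle\ell\rangle$ in the denominator (one power from each norm) which must beat everything that the target $\|\{\chi,H\}\|_{\mathscr{H}}$ demands, namely $\sqrt{\Upsilon(\mathbf{n})}$ together with the divisor factor $\langle\sum\text{sign}\,\langle\ell_i\rangle\rangle$ present in $\mathscr{C}$ but which we want to discard for $\mathscr{H}$. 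I would split into cases according to whether the contracted frequency $\langle\ell\rangle$ is comparable to the largest simple frequency appearing in $\mathbf{n}$: if $\Upsilon(\mathbf{n})\lesssim_{r,r'}\langle\ell\rangle$, then one power of $\langle\ell\rangle^{1/2}$ in the denominator absorbs $\sqrt{\Upsilon(\mathbf{n})}$; the remaining $\langle\ell\rangle^{1/2}$ must then dominate the divisor $\langle\sigma_a\ell+\sum_{i\in\mathbf{n}'}\text{sign}_i\langle\ell_i\rangle\rangle^{-1}$, which it does trivially since that divisor is $\ge 1$ in the denominator — wait, it helps us. The genuinely delicate subcase is when $\Upsilon(\mathbf{n})\gg\langle\ell\rangle$: then the largest simple index of $\mathbf{n}$ lives, say, on the $\chi$-side in $\mathbf{n}'$, so it is a simple index of $\mathbf{k}$ as well, and one checks $\Upsilon(\mathbf{k})\gtrsim\Upsilon(\mathbf{n})$, so the missing $\sqrt{\Upsilon(\mathbf{n})}$ is recovered from $\sqrt{\Upsilon(\mathbf{k})}$ in the $\mathscr{C}$-norm; one must also verify the book-keeping that $\Upsilon(\mathbf{j})\ge 1$ suffices on the $H$-side. (If the maximal simple index lies on the $H$-side one argues symmetrically using $\Upsilon(\mathbf{j})$; a subtlety is that an index simple in $\mathbf{n}$ need not remain simple after adjoining the contracted $k$, which is exactly why the contracted frequency must be treated as potentially equal to it, and is the source of the case split.)

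Finally, summing the per-term bounds: each coefficient $\left(\{\chi,H\}\right)_{\mathbf{n}}^{\varsigma}$ is a sum of $\lesssim_{r,r'}1$ choices of partition $(\mathbf{n}',\mathbf{n}'')$ and signs, but for each such choice the contracted index $k=(\ell,m)$ ranges over \emph{all} of $\mathcal{T}_M$, and this is where the $\log M$ enters: after extracting the target weight the residual sum is of the shape $\sum_{(\ell,m)\in\mathcal{T}_M}\frac{1}{\langle\ell\rangle\,\langle\ell\pm(\dots)\rangle}$ or $\sum_{\ell\le M}\frac{1}{\langle\ell\rangle}$ (the multiplicity $2\ell+1$ of $E_\ell$ cancelling one power of $\langle\ell\rangle$), which is $\lesssim\log M$. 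Putting the pieces together gives $\|\{\chi,H\}\|_{\mathscr{H}}\lesssim_{r,r'}\log M\,\|H\|_{\mathscr{H}}\|\chi\|_{\mathscr{C}}$; one should not forget to first record that $\{\chi,H\}$ is indeed real-valued, homogeneous of degree $r+r'-2$, and satisfies the reality and symmetry conditions, which is immediate from $\{\chi,H\}$ being a Poisson bracket of real Hamiltonians. The main obstacle I anticipate is precisely the case analysis on which side the maximal simple frequency of $\mathbf{n}$ sits and whether it survives the contraction — tracking $\Upsilon$ through the contraction correctly (including the degenerate case where the largest simple index of $\mathbf{n}$ coincides with or is destroyed by the contracted frequency) is the heart of why these weights were chosen, and must be done carefully rather than by inspection.
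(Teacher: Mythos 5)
Your overall strategy is exactly the paper's: expand both Hamiltonians, contract one index, estimate each coefficient of $\{\chi,H\}$ via the norm bounds, and sum over the contracted index $\mathfrak{K}=(\mathfrak{l},\mathfrak{m})$ to produce the $\log M$ after the eigenspace multiplicity $2\mathfrak{l}+1$ is divided out. You also correctly identify the heart of the matter — tracking how the weight $\Upsilon$ behaves under contraction — and your ``case 2'' is the right argument when the maximal simple frequency of $\mathbf{n}$ is not the contracted one. However, your ``case 1'' as written has a genuine gap.

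Concretely: the factor $\langle\mathfrak{l}\rangle$ that appears in the denominator of the product bound $|\chi_{\mathbf{k}}^\sigma||H_{\mathbf{j}}^\tau|$ (one $\langle\mathfrak{l}\rangle^{1/2}$ from each side, coming from the weight of the contracted index in the products $\sqrt{\langle\ell_1\rangle\cdots}$) is \emph{entirely consumed} when you sum over $\mathfrak{K}$: since each $\mathfrak{l}\leq M$ contributes $2\mathfrak{l}+1$ terms, the residual per-$\mathfrak{l}$ weight is $\frac{2\mathfrak{l}+1}{\langle\mathfrak{l}\rangle\langle a-\mathfrak{l}\rangle}\approx\frac{1}{\langle a-\mathfrak{l}\rangle}$, whose sum gives the $\log M$. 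If you steal $\langle\mathfrak{l}\rangle^{1/2}$ to absorb $\sqrt{\Upsilon(\mathbf{n})}$, the remaining sum is of the shape $\sum_\mathfrak{l}\langle\mathfrak{l}\rangle^{1/2}/\langle a-\mathfrak{l}\rangle$, which is not $O(\log M)$ uniformly in $a$ (it can grow like $M^{1/2}$). So there is no spare $\langle\mathfrak{l}\rangle^{1/2}$, and the hesitation you flagged (``wait, it helps us'') was a real symptom.

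The correct observation — and this is the paper's key lemma (inequality (34), $\Upsilon(\mathbf{k},\mathbf{k}')\leq\Upsilon(\mathbf{k},\mathfrak{K})\Upsilon(\mathbf{k}',\mathfrak{K})$) — is that $\sqrt{\Upsilon(\mathbf{n})}$ is \emph{always} recovered from $\sqrt{\Upsilon(\mathbf{k})\Upsilon(\mathbf{j})}$ alone, leaving $\langle\mathfrak{l}\rangle$ untouched for the multiplicity. Your case 2 handles the situation where the maximal simple index $k_i$ of $\mathbf{n}$ differs from $\mathfrak{K}$ (it then remains simple on one side). The missing observation for your ``case 1'' is: if instead $k_i=\mathfrak{K}$, then since $k_i$ is simple in $\mathbf{n}$ it appears in exactly one of the sub-lists $\mathbf{n}',\mathbf{n}''$, say in $\mathbf{n}'$; hence $\mathfrak{K}\notin\mathbf{n}''$, so $\mathfrak{K}$ is simple in $\mathbf{j}=(\mathbf{n}'',\mathfrak{K})$, giving $\Upsilon(\mathbf{j})\geq\langle\mathfrak{l}\rangle=\Upsilon(\mathbf{n})$. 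So in every case the needed factor comes from the $\Upsilon$'s, not from $\langle\mathfrak{l}\rangle$. With this substitution your argument closes and coincides with the paper's.
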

\proof
By definition of the Poisson bracket, we have 
 \begin{equation}
 \label{eq_un_lapin} 
\{ \chi,H\}(u)  = 
  2i \sum_{\mathfrak{K}\in \mathcal{T}_M} \partial_{\bar{u}_\mathfrak{K}}\chi(u) \partial_{u_\mathfrak{K}} H(u) - \partial_{u_\mathfrak{K}}\chi(u) \partial_{\bar{u}_\mathfrak{K}} H(u).
 \end{equation}
Since the coefficients of $H$ and $K$ are symmetric (i.e. satisfy \eqref{eq:def_sym_cond}), we have
\begin{equation}
\label{eq_a_tue_un_chasseur}
\partial_{\bar{u}_\mathfrak{K}} \chi \partial_{u_\mathfrak{K}} H = r r'  \sum_{\substack{\sigma \in \{-1,1\}^{r-1}\\ \sigma' \in \{-1,1\}^{r'-1} }} \sum_{\substack{\mathbf{k}\in  \mathcal{T}_M^{r-1} \\ \mathbf{k}'\in  \mathcal{T}_M^{r'-1}  }}    \chi_{\mathbf{k},\mathfrak{K}}^{\sigma,-1} u_{k_1}^{\sigma_1} \dots u_{k_{r-1}}^{\sigma_{r-1}}  H_{\mathbf{k}',\mathfrak{K}}^{\sigma',1} u_{k'_1}^{\sigma'_1} \dots u_{k'_{r'-1}}^{\sigma'_{r'-1}}.
\end{equation}
Obviously, $\{ \chi,H\}$ defines an homogeneous polynomial of degree $r+r'-2$. Hence, we need to verify the reality condition~\eqref{eq:def_real} and the upper bound on the $\mathscr{H}$-norm. For the latter, we begin by estimating $\sum_\mathfrak{K} \chi_{\mathbf{k},\mathfrak{K}}^{\sigma,-1} H_{\mathbf{k}',\mathfrak{K}}^{\sigma',1}$. By \eqref{eq:norm_HM} and \eqref{eq:norm_CMd}, denoting $\mathbf{k}\in  \mathcal{T}_M^{r-1}$, $\mathbf{k}'\in  \mathcal{T}_M^{r'-1}$ , $\mathbf{k}''=(\mathbf{k},\mathbf{k}')$ and $r''=r+r'-2$, we have
\begin{equation}
\begin{split}
\label{po1}
\sum_{\mathfrak{K}\in \mathcal{T}_M}| \chi_{\mathbf{k},\mathfrak{K}}^{\sigma,-1} H_{\mathbf{k}',\mathfrak{K}}^{\sigma',1}| &\leq \frac{\| H\|_{\mathscr{H}} \|\chi\|_{\mathscr{C}}}{\sqrt{\langle \ell_1 \rangle \cdots \langle \ell_{r-1} \rangle \langle \ell'_1 \rangle \cdots \langle \ell'_{r'-1} \rangle }}\times\\
&\sum_{\mathfrak{K}=(\mathfrak{l},\mathfrak{m})\in \mathcal{T}_M}\frac{1}{\langle \mathfrak{l} \rangle\langle \sigma_1 \ell_1 + \cdots + \sigma_{r-1} \ell_{r-1}-\mathfrak{l}\rangle\sqrt{ \Upsilon( \mathbf{k},\mathfrak{K}) \Upsilon( \mathbf{k}',\mathfrak{K})}}.
\end{split}
\end{equation}
We claim that for all $\mathfrak{K}  \in \mathcal{T}_M$ we have 
\begin{equation}\label{po2}\Upsilon(\mathbf{k},\mathbf{k}')\leq \Upsilon( \mathbf{k},\mathfrak{K}) \Upsilon( \mathbf{k}',\mathfrak{K}).\end{equation}
 Indeed, if $\Upsilon(\mathbf{k},\mathbf{k}')=1$ the inequality is trivial so we can assume that 
 \begin{itemize}
 \item either there exists $1\leq i\leq r-1$  such that $\Upsilon(\mathbf{k},\mathbf{k}')=\langle \ell_i\rangle $, $k_j\neq k_i$ for $1\leq j\leq r-1$ with $j\neq i$ and $k'_{j'}\neq k_{i}$ for $1\leq j'\leq r'-1$, 
 \item or there exists  $1\leq i'\leq r'-1$ such that $\Upsilon(\mathbf{k},\mathbf{k}')=\langle \ell'_{i'}\rangle $, $k'_{j'}\neq k_{i'}$ for $1\leq j'\leq r'-1$ with $j'\neq i'$ and $k_{j}\neq k'_{i'}$ for $1\leq j\leq r-1$.
 \end{itemize}
  By symmetry of the problem, let us assume the former and  let $\mathfrak{K} = (\mathfrak{l},\mathfrak{m}) \in \mathcal{T}_M $.\\
  If $\Upsilon( \mathbf{k},\mathfrak{K})\geq \langle \ell_i\rangle=\Upsilon(\mathbf{k},\mathbf{k}')$ then \eqref{po2} holds true trivially. 
 So let us assume that  $\Upsilon( \mathbf{k},\mathfrak{K})< \langle \ell_i\rangle$. This implies that $\mathfrak{K}=k_i$ (if not $\Upsilon( \mathbf{k},\mathfrak{K})$ is the maximum of a list of numbers including  $\langle \ell_i\rangle$ ). But then, if $\Upsilon( \mathbf{k}',\mathfrak{K})\geq \langle \mathfrak{l} \rangle$, we deduce $\Upsilon( \mathbf{k}',\mathfrak{K})\geq \langle \ell_i\rangle= \Upsilon(\mathbf{k},\mathbf{k}')$ which in turn implies \eqref{po2}. Thus it remains to consider the case $\Upsilon( \mathbf{k}',\mathfrak{K})< \langle \mathfrak{l} \rangle$ which leads to the existence of $1\leq j'\leq r'-1$ such that $k_{j'}=\mathfrak{K}$ (if not $\Upsilon( \mathbf{k}',\mathfrak{K})$ is the maximum of a list of numbers including  $\langle \mathfrak{l} \rangle$). Therefore $k_i=k_{j'}$ which contradicts the definition of $i$.

Implementing~\eqref{po2} in~\eqref{po1} and denoting $a=\sigma_1 \ell_1 + \cdots + \sigma_{r-1} \ell_{r-1} $, one is left with estimating
\begin{align}\label{po3}
\sum_{\mathfrak{K} = (\mathfrak{l},\mathfrak{m}) \in \mathcal{T}_M}\frac{1}{\langle \mathfrak{l} \rangle\langle \sigma_1 \ell_1 + \cdots + \sigma_{r-1} \ell_{r-1}-\mathfrak{l} \rangle}&\leq 4 \sum_{\mathfrak{l} =0}^M \frac1{\sqrt{1+(a-\mathfrak{l} )^2}}\\ \nonumber &\leq 4\sum_{j=-a}^{M-a} \frac1{\sqrt{1+j^2}}\leq 8\sum_{j=0}^{M} \frac1{\sqrt{1+j^2}}
\lesssim \log M
\end{align}
independently of the value of $a$.

Inserting \eqref{po2} and \eqref{po3} in \eqref{po1}, we get uniformly with respect to $\sigma,\sigma',k,k'$
\begin{equation}\label{po4}
\sum_{\mathfrak{K}  \in \mathcal{T}_M}| \chi_{\mathbf{k},\mathfrak{K} }^{\sigma,-1} H_{\mathbf{k}',\mathfrak{K} }^{\sigma',1}| \lesssim \log M\frac{\| H\|_{\mathscr{H}} \|\chi\|_{\mathscr{C}}}{ \sqrt{ \Upsilon( \mathbf{k},\mathbf{k}')}\sqrt{\langle \ell_1 \rangle \cdots \langle \ell_{r-1} \rangle \langle \ell'_1 \rangle \cdots \langle \ell'_{r'-1} \rangle }}.
\end{equation}
Then, denoting $r''=r+r'-2$, $\mathbf{k}''=(\mathbf{k},\mathbf{k}')$ and $\sigma''=(\sigma,\sigma')$, we define
$$
M_{\mathbf{k}''}^{\sigma''} := 2 i r r' \sum_{\mathfrak{K}  \in \mathcal{T}_M} \chi_{\mathbf{k},\mathfrak{K} }^{\sigma,-1} H_{\mathbf{k}',\mathfrak{K} }^{\sigma',1} - \chi_{\mathbf{k},\mathfrak{K} }^{\sigma,1} H_{\mathbf{k}',\mathfrak{K} }^{\sigma',-1} \quad \mathrm{and} \quad P_{\mathbf{k}''}^{\sigma''} = \frac1{r'' !} \sum_{\rho \in \mathfrak{S}_{r''}} M_{\mathbf{k}''\circ \rho}^{\sigma''\circ \rho}.
$$
By definition, $P(u) = \{ \chi, H\}(u) $ and the estimates \eqref{po4} proves that 
$$
\| P\|_{\mathscr{H}} \lesssim r r' \log M\| H\|_{\mathscr{H}} \|\chi\|_{\mathscr{C}}.
$$ 
Finally, the coefficients of $P$ are obviously symmetric and, by a direct calculation, we verify that they satisfy the reality condition \eqref{eq:def_real}.

\endproof

We now study the vector field on $\mathbb{C}^{\mathcal{T}_M}$ associated with an Hamiltonian in $\mathscr{H}_M^r$.

\begin{lemma}\label{grad-} Let $M\geq 2$ and $r\geq 2$. For all $H\in \mathscr{H}_M^r$, $H$ is a real valued  smooth map on $\mathbb{C}^{\mathcal{T}_M}$ which enjoys the bounds
$$
\forall u\in \mathbb{C}^{\mathcal{T}_M}, \quad \| \nabla H(u) \|_{h^{-1/2}} \lesssim_r (\log (M) )^{r/2} \| H \|_{\mathscr{H}} \| u\|_{h^{1/2}}^{r-1} 
$$
\end{lemma}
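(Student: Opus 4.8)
The strategy is to expand $\nabla H(u)$ in the basis, bound each coefficient $(\nabla H(u))_{\mathfrak{K}}$ by a multilinear sum using the definition of $\|H\|_{\mathscr{H}}$, and then recognize the resulting sum as (essentially) a product of $\ell^2$-type sums over the remaining indices, each of which converges up to a logarithmic factor. First, since $H\in \mathscr{H}_M^r$ is a homogeneous polynomial of degree $r$ with symmetric coefficients, differentiating gives, for $\mathfrak{K}=(\mathfrak{l},\mathfrak{m})\in\mathcal{T}_M$,
$$
\partial_{\overline{u_{\mathfrak{K}}}} H(u) = r \sum_{\sigma\in\{-1,1\}^{r-1}} \sum_{\mathbf{k}\in\mathcal{T}_M^{r-1}} H_{\mathbf{k},\mathfrak{K}}^{\sigma,-1}\, u_{k_1}^{\sigma_1}\cdots u_{k_{r-1}}^{\sigma_{r-1}},
$$
so that $(\nabla H(u))_{\mathfrak{K}} = 2\,\partial_{\overline{u_{\mathfrak{K}}}} H(u)$ and hence, by the definition \eqref{eq:norm_HM} of $\|H\|_{\mathscr{H}}$ (bounding $\sqrt{\Upsilon(\mathbf{k},\mathfrak{K})}\geq 1$),
$$
|(\nabla H(u))_{\mathfrak{K}}| \lesssim_r \|H\|_{\mathscr{H}} \,\frac{1}{\sqrt{\langle \mathfrak{l}\rangle}} \sum_{\sigma} \sum_{\mathbf{k}\in\mathcal{T}_M^{r-1}} \frac{|u_{k_1}|\cdots|u_{k_{r-1}}|}{\sqrt{\langle \ell_1\rangle \cdots\langle\ell_{r-1}\rangle}}.
$$

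Next I would control the inner sum. Write $|u_{k_j}|/\sqrt{\langle\ell_j\rangle} = (|u_{k_j}|\langle\ell_j\rangle^{1/2})\cdot\langle\ell_j\rangle^{-1}$; Cauchy--Schwarz over the $2\ell_j+1$ values of $m_j$ for each fixed $\ell_j$ would be the natural move, but a cleaner route is: for each factor, estimate
$$
\sum_{k_j=(\ell_j,m_j)\in\mathcal{T}_M} \frac{|u_{k_j}|}{\sqrt{\langle\ell_j\rangle}} = \sum_{\ell_j=0}^M \frac{1}{\sqrt{\langle\ell_j\rangle}}\sum_{|m_j|\leq\ell_j} |u_{k_j}| \leq \sum_{\ell_j=0}^M \frac{\sqrt{2\ell_j+1}}{\sqrt{\langle\ell_j\rangle}}\Big(\sum_{|m_j|\leq\ell_j}|u_{k_j}|^2\Big)^{1/2}
$$
and then another Cauchy--Schwarz in $\ell_j$, inserting and removing a weight $\langle\ell_j\rangle^{1/2}$: the term $\sqrt{2\ell_j+1}/\langle\ell_j\rangle \cdot \langle\ell_j\rangle^{-1/2}\approx \langle\ell_j\rangle^{-1}$ is square-summable with sum $\lesssim \log M$ actually $O(1)$... — here one must be slightly careful: $\sqrt{2\ell_j+1}/\langle\ell_j\rangle \approx \langle\ell_j\rangle^{-1/2}$, which is \emph{not} square-summable, so the weight split must be done so that the divergent part is absorbed into $\|u\|_{h^{1/2}}$. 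Concretely, bound $\sum_{|m_j|\leq\ell_j}|u_{k_j}|^2 =: a_{\ell_j}^2$ and then $\sum_{\ell_j} \langle\ell_j\rangle^{-1/2} a_{\ell_j} \leq \big(\sum_{\ell_j}\langle\ell_j\rangle^{-3/2}\langle\ell_j\rangle^{-1/2}\cdot\langle\ell_j\rangle\big)^{?}$... the correct pairing is $\langle\ell_j\rangle^{-1/2}a_{\ell_j} = \langle\ell_j\rangle^{-1}\cdot(\langle\ell_j\rangle^{1/2}a_{\ell_j})$ giving $\big(\sum\langle\ell_j\rangle^{-2}\big)^{1/2}\big(\sum\langle\ell_j\rangle a_{\ell_j}^2\big)^{1/2}\lesssim \|u\|_{h^{1/2}}$, with \emph{no} logarithm from this step. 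This handles each of the $r-1$ factors, yielding $|(\nabla H(u))_{\mathfrak{K}}|\lesssim_r \|H\|_{\mathscr{H}}\langle\mathfrak{l}\rangle^{-1/2}\|u\|_{h^{1/2}}^{r-1}$.

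Finally, to recover the $h^{-1/2}$ norm of $\nabla H(u)$, one sums $|(\nabla H(u))_{\mathfrak{K}}|^2 \langle\mathfrak{l}\rangle^{-1}$ over $\mathfrak{K}\in\mathcal{T}_M$: the bound just obtained gives $\langle\mathfrak{l}\rangle^{-2}$ times $(2\mathfrak{l}+1)$ from the $\mathfrak{m}$-sum, i.e. $\langle\mathfrak{l}\rangle^{-1}$, whose sum over $\mathfrak{l}\leq M$ is $\lesssim \log M$. Taking square roots produces one factor $(\log M)^{1/2}$, and the total power of $\log M$ claimed is $r/2$ — so the per-factor estimates above must in fact each contribute a $(\log M)^{1/2}$, meaning the weight split should \emph{not} be the optimal one and instead the crude pairing $\langle\ell_j\rangle^{-1/2}a_{\ell_j}\leq\langle\ell_j\rangle^{-1/2}\cdot a_{\ell_j}$ with Cauchy--Schwarz against $\sum\langle\ell_j\rangle^{-1}\approx\log M$ and $\sum a_{\ell_j}^2\leq \|u\|_{h^{1/2}}^2$ (legitimate since $a_{\ell_j}^2\leq \langle\ell_j\rangle a_{\ell_j}^2$) is what the authors use; this gives $(r-1)$ factors of $(\log M)^{1/2}$ from the gradient coefficients plus one more from the outer $\mathfrak{l}$-sum, totalling $(\log M)^{r/2}$. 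The main obstacle is thus purely bookkeeping: organizing the Cauchy--Schwarz applications so the weights $\langle\ell_j\rangle^{\pm 1/2}$ land correctly and the logarithms accumulate to exactly $r/2$; there is no genuine analytic difficulty, as $\Upsilon$ plays no role here (only $\Upsilon\geq 1$ is used) and the multiplicity $2\ell+1\approx\langle\ell\rangle$ of the eigenspaces is exactly compensated by the $\langle\ell\rangle^{1/2}$ weights in both $\|\cdot\|_{\mathscr{H}}$ and $h^{1/2}$.
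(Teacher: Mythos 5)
Your proof is correct and is essentially the paper's argument in a slightly different packaging: the paper estimates $(\nabla H(u),v)_{\ell^2}$ for an arbitrary test vector $v\in h^{1/2}$ and concludes by duality, whereas you bound each coefficient $(\nabla H(u))_{\mathfrak{K}}$ pointwise and then evaluate $\sum_{\mathfrak{K}}\langle\mathfrak{l}\rangle^{-1}|(\nabla H(u))_{\mathfrak{K}}|^2$ directly; the two routes produce the same factorized sums, and in both the power $(\log M)^{r/2}$ arises from $r$ applications of Cauchy--Schwarz against $\sum_{k=(\ell,m)\in\mathcal{T}_M}\langle\ell\rangle^{-2}\approx\sum_{\ell\leq M}(2\ell+1)\langle\ell\rangle^{-2}\lesssim\log M$ ($r-1$ from the $u$-indices, one more from either the $v$-sum or your outer $\mathfrak{K}$-sum). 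One bookkeeping slip worth flagging: after Cauchy--Schwarz in $m_j$, the weight in your $\ell_j$-sum is $\sqrt{2\ell_j+1}/\sqrt{\langle\ell_j\rangle}\approx 1$, so the quantity to bound is $\sum_{\ell_j}a_{\ell_j}$, \emph{not} $\sum_{\ell_j}\langle\ell_j\rangle^{-1/2}a_{\ell_j}$ as in both your ``clean'' and ``crude'' pairings; the split that works is $a_{\ell_j}=\langle\ell_j\rangle^{-1/2}\cdot\bigl(\langle\ell_j\rangle^{1/2}a_{\ell_j}\bigr)$, yielding $(\log M)^{1/2}\|u\|_{h^{1/2}}$ per factor, which is exactly the per-factor contribution your final tally deduced must occur. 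The conclusion and strategy are sound; only the intermediate discussion of where the logarithms come from needs to be cleaned up.
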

\proof As a polynomial (of finitely many variables), any Hamiltonian $H\in \mathscr{H}_M^r$ is a smooth map on $\mathbb{C}^{\mathcal{T}_M}$. We aim at bounding the norm by duality. To that aim, we fix $v\in  \mathbb{C}^{\mathcal{T}_M}$ and we need to estimate $|(\nabla H(u),v)_{\ell^2}|$. Since the coefficients of $H$ are symmetric, we then write
\begin{align*}|(\nabla H(u),v)_{\ell^2}|&\leq r\|H\|_{\mathscr{H}}\sum_{\sigma \in \{-1,1\}^r} \sum_{\mathbf{k}\in \mathcal{T}_M^r } \frac{ |u_{k_1}^{\sigma_1}|}{\langle \ell_1\rangle^{\frac12}} \dots \frac{|v_{k_r}^{\sigma_r}|}{\langle \ell_r\rangle^{\frac12}}\\
&\leq r2^r\|H\|_{\mathscr{H}} \sum_{\mathbf{k}\in \mathcal{T}_M^r } \frac{ \langle \ell_1\rangle^{\frac12}| u_{k_1}|}{\langle \ell_1\rangle} \dots \frac{\langle \ell_r\rangle^{\frac12}|v_{k_r}|}{\langle \ell_r\rangle}\\
&\leq r2^r \|H\|_{\mathscr{H}}\|u\|_{h^{1/2}}^{r-1} \|v\|_{h^{1/2}}\Big(\sum_{k=(\ell,m)\in \mathcal{T}_M } \frac1{\langle \ell \rangle^2} \Big)^{r/2}\\
& \lesssim_r (\log (M) )^{r/2} \| H \|_{\mathscr{H}} \|u\|_{h^{1/2}}^{r-1} \|v\|_{h^{1/2}}.
\end{align*}
Then by duality we obtain
$$
\| \nabla H(u) \|_{h^{-1/2}} \lesssim_r (\log (M) )^{r/2} \| H \|_{\mathscr{H}} \| u\|_{h^{1/2}}^{r-1}.
$$
\endproof

The $\mathscr{C}$-norm provides a better estimate on the gradient:
\begin{lemma}\label{grad+} Let $M\geq 2$, $r\geq 2$. For all $\chi\in \mathscr{H}_{M}^r$ and all $u\in \mathbb{C}^{\mathcal{T}_M}$, we have the bounds
\begin{equation}\label{estim-grad-chi}
\| \nabla \chi(u) \|_{h^{1/2}} \lesssim_r (\log (M) )^{(r-1)/2} \| \chi \|_{\mathscr{C}} \| u\|_{h^{1/2}}^{r-1} 
\end{equation}
and
\begin{equation}\label{estim-dgrad-chi}
 \| \mathrm{d}\nabla \chi(u) \|_{\mathscr{L}(h^{1/2})} \lesssim_r  (\log (M) )^{(r-1)/2} \| \chi \|_{\mathscr{C}} \| u\|_{h^{1/2}}^{r-2}.
\end{equation}
\end{lemma}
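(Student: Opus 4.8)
The plan is to prove both estimates by the same duality argument used for Lemma~\ref{grad-}, but keeping track of the extra factor $\langle \sigma_1\ell_1 + \cdots + \sigma_r\ell_r\rangle$ that appears in the $\mathscr{C}$-norm~\eqref{eq:norm_CMd}. First I would fix $\chi\in\mathscr{H}_M^r$ and expand $\nabla\chi(u)$ componentwise: using the symmetry condition~\eqref{eq:def_sym_cond}, for $\mathfrak{K}=(\mathfrak{l},\mathfrak{m})\in\mathcal{T}_M$,
$$
(\nabla\chi(u))_{\mathfrak{K}} = 2\,\partial_{\overline{u_{\mathfrak{K}}}}\chi(u) = 2r\sum_{\sigma\in\{-1,1\}^{r-1}}\sum_{\mathbf{k}\in\mathcal{T}_M^{r-1}} \chi_{\mathbf{k},\mathfrak{K}}^{\sigma,-1}\, u_{k_1}^{\sigma_1}\cdots u_{k_{r-1}}^{\sigma_{r-1}}.
$$
To bound $\|\nabla\chi(u)\|_{h^{1/2}}$ I would test against $v\in\mathbb{C}^{\mathcal{T}_M}$ and estimate $|(\nabla\chi(u),v)_{\ell^2}|$. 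The key point is that $\|\chi\|_{\mathscr{C}}$ controls $|\chi_{\mathbf{k},\mathfrak{K}}^{\sigma,-1}|$ by $\|\chi\|_{\mathscr{C}}\,\langle a-\mathfrak{l}\rangle^{-1}\langle\mathfrak{l}\rangle^{-1/2}\langle\ell_1\rangle^{-1/2}\cdots\langle\ell_{r-1}\rangle^{-1/2}\,\Upsilon(\mathbf{k},\mathfrak{K})^{-1/2}$ where $a=\sigma_1\ell_1+\cdots+\sigma_{r-1}\ell_{r-1}$. After bounding $\Upsilon\geq 1$ from below and absorbing the $\langle\ell_j\rangle^{-1/2}$ factors against the $h^{1/2}$ norms of $u$ via Cauchy--Schwarz, exactly as in Lemma~\ref{grad-}, one is left with
$$
\sum_{\mathfrak{K}=(\mathfrak{l},\mathfrak{m})\in\mathcal{T}_M}\frac{\langle\mathfrak{l}\rangle^{1/2}|v_{\mathfrak{K}}|}{\langle a-\mathfrak{l}\rangle\,\langle\mathfrak{l}\rangle^{1/2}} = \sum_{\mathfrak{K}}\frac{|v_{\mathfrak{K}}|}{\langle a-\mathfrak{l}\rangle},
$$
where the crucial cancellation is that the $\langle\mathfrak{l}\rangle^{1/2}$ produced by the $h^{1/2}$-norm of $v$ cancels the $\langle\mathfrak{l}\rangle^{-1/2}$ from the $\mathscr{C}$-norm. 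I would then apply Cauchy--Schwarz in $\mathfrak{K}$: the sum is bounded by $\|v\|_{h^{1/2}}\big(\sum_{\mathfrak{l}=0}^M (2\mathfrak{l}+1)\langle a-\mathfrak{l}\rangle^{-2}\big)^{1/2}$, but in fact one should be slightly more careful and distribute $\langle\mathfrak{l}\rangle^{1/2}/\langle\mathfrak{l}\rangle^{1/2}$ so that after summing over the $2\mathfrak{l}+1$ values of $\mathfrak{m}$ the resulting one-dimensional sum $\sum_{\mathfrak{l}}\langle a-\mathfrak{l}\rangle^{-1}$ (picking up one more $\langle\mathfrak{l}\rangle^{-1/2}$ somewhere, or simply noting $(2\mathfrak{l}+1)\lesssim\langle\mathfrak{l}\rangle^2$ forces use of $\sum\langle a-\mathfrak{l}\rangle^{-1}\langle\mathfrak{l}\rangle^{-1}$-type bounds) yields $\lesssim\log M$, just as in~\eqref{po3}. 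Taking the $r/2-1/2$ logarithmic factor from the $r-1$ remaining $h^{1/2}$ pairings of $u$ (via $\big(\sum_k\langle\ell\rangle^{-2}\big)^{(r-1)/2}\lesssim_r(\log M)^{(r-1)/2}$) and one more from the Schur-type sum, one gets $|(\nabla\chi(u),v)_{\ell^2}|\lesssim_r(\log M)^{(r-1)/2}\|\chi\|_{\mathscr{C}}\|u\|_{h^{1/2}}^{r-1}\|v\|_{h^{1/2}}$, which gives~\eqref{estim-grad-chi} by duality.

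For the second bound~\eqref{estim-dgrad-chi}, I would differentiate the expression for $\nabla\chi$ once more: $\mathrm{d}\nabla\chi(u)$ is the multilinear map whose $\mathfrak{K}$-th component applied to $w\in\mathbb{C}^{\mathcal{T}_M}$ is, again up to combinatorial constants and using the reality/symmetry conditions, a sum over two remaining signs and indices $\mathfrak{K},\mathfrak{K}'$ of $\chi^{\cdots}_{\mathbf{k},\mathfrak{K},\mathfrak{K}'} u^{\cdots}\cdots u^{\cdots} w^{\cdots}$ with $r-2$ factors of $u$ and one of $w$. Testing $(\mathrm{d}\nabla\chi(u)w,v)_{\ell^2}$ against $v$ and proceeding identically — two of the $\langle\ell\rangle^{-1/2}$ from the $\mathscr{C}$-norm now cancel the $\langle\cdot\rangle^{1/2}$ produced by $v$ and $w$, while the divisor $\langle a - \mathfrak{l}\rangle^{-1}$ (with $a$ now depending on the exponents of $v$ and $w$) again produces a single $\log M$ from a sum of the form~\eqref{po3} — one obtains $|(\mathrm{d}\nabla\chi(u)w,v)_{\ell^2}|\lesssim_r(\log M)^{(r-1)/2}\|\chi\|_{\mathscr{C}}\|u\|_{h^{1/2}}^{r-2}\|w\|_{h^{1/2}}\|v\|_{h^{1/2}}$, which is~\eqref{estim-dgrad-chi} after taking the supremum over $v,w$ of unit $h^{1/2}$-norm. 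The $\Upsilon$-factor is only used through $\Upsilon\geq 1$, so the argument is actually insensitive to it here (it is the $\mathscr{C}$-norm's $\langle\sigma_1\ell_1+\cdots\rangle$ weight, not $\Upsilon$, that is doing the work).

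The main obstacle I anticipate is organizing the power-counting of $\langle\mathfrak{l}\rangle$ (and of $2\mathfrak{l}+1$, coming from summing over $\mathfrak{m}\in\llbracket-\mathfrak{l},\mathfrak{l}\rrbracket$) so that the final one-dimensional sum over $\mathfrak{l}$ genuinely has the form $\sum_{\mathfrak{l}=0}^M\langle a-\mathfrak{l}\rangle^{-1}\langle\mathfrak{l}\rangle^{-1/2}\cdots$ and collapses to $\lesssim\log M$ rather than diverging polynomially: one must check that the factor $\langle\mathfrak{l}\rangle^{-1/2}$ in the $\mathscr{C}$-norm exactly offsets the $h^{1/2}$-weight of $v$ in the $\mathfrak{K}$-variable, and that after accounting for the $2\mathfrak{l}+1$-fold multiplicity of each $\mathfrak{l}$ and pulling out $\|v\|_{h^{1/2}}$ by Cauchy--Schwarz, what remains is $\big(\sum_{\mathfrak{l}}(2\mathfrak{l}+1)\langle\mathfrak{l}\rangle^{-1}\langle a-\mathfrak{l}\rangle^{-2}\big)^{1/2}$, and this is $O(1)$ uniformly in $a$ (not merely $O(\sqrt{\log M})$) — so the lemma's stated logarithmic loss $(\log M)^{(r-1)/2}$ comes entirely from the $r-1$ (resp.\ $r-2$ plus one extra from the divisor) factors $\big(\sum_k\langle\ell\rangle^{-2}\big)^{1/2}\lesssim\sqrt{\log M}$ attached to the $u$-variables and the divisor sum, not from the component sum. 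Getting these bookkeeping constants right, while routine, is where a careless computation would go wrong.
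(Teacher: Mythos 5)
Your overall setup — expanding $\nabla\chi$ componentwise, reading the $\mathscr{C}$-norm as a pointwise bound on the coefficients $\chi_{\mathbf{k},\mathfrak{K}}^{\sigma,-1}$, and closing by Cauchy--Schwarz — is the same as the paper's. But there are two linked gaps that make the argument as you describe it fail to give the claimed $h^{1/2}$ bound.

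The crucial one is that you dismiss the factor $\Upsilon(\mathbf{k},\mathfrak{K})^{-1/2}$ by "bounding $\Upsilon\geq 1$" and even state explicitly that the argument is "insensitive" to $\Upsilon$. This is false, and it is the single key idea of the paper's proof. After inserting the $\mathscr{C}$-bound and peeling off the $u$-factors, the $\mathfrak{K}=(\mathfrak{l},\mathfrak{m})$-sum you are left with is (with $\tilde v_{\mathfrak{K}}=\langle\mathfrak{l}\rangle^{-1/2}|v_{\mathfrak{K}}|$, $\|\tilde v\|_{\ell^2}=\|v\|_{h^{-1/2}}$, which is the weight you need for the $h^{1/2}$-duality)
\[
\sum_{\mathfrak{K}\in\mathcal{T}_M}\frac{\tilde v_{\mathfrak{K}}}{\langle a-\mathfrak{l}\rangle\,\Upsilon(\mathbf{k},\mathfrak{K})^{1/2}},
\]
and the Schur factor after Cauchy--Schwarz is $\bigl(\sum_{\mathfrak{l}}(2\mathfrak{l}+1)\langle a-\mathfrak{l}\rangle^{-2}\Upsilon^{-1}\bigr)^{1/2}$. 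If $\Upsilon$ is only bounded below by $1$, the inner sum $\sum_{\mathfrak{l}=0}^M (2\mathfrak{l}+1)\langle a-\mathfrak{l}\rangle^{-2}$ is of size $\sim a\sim M$ for $a$ comparable to $M$, not $O(1)$ and not $O(\log M)$: the $2\mathfrak{l}+1$ multiplicity of each spherical shell destroys the summability. The extra $\langle\mathfrak{l}\rangle^{-1}$ you eventually write in your last paragraph, which does make the sum $O(1)$, is precisely $\Upsilon^{-1}$ in disguise — it is not produced anywhere in your computation once $\Upsilon$ has been thrown away. The paper gets it by observing $\Upsilon(\mathbf{k},\mathfrak{K})\geq\langle\mathfrak{l}\rangle$ whenever $\mathfrak{K}=k_r$ does not coincide with any of the other $k_j$'s, which is the first of the two cases $\Sigma_1$ (simple $k_r$) and $\Sigma_2$ (repeated $k_r$) into which the sum is split. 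You miss this case split entirely; in the repeated case $\Upsilon$ can genuinely be $1$, but then the coincidence $k_r=k_j$ means the $\mathfrak{K}$-variable is paired with a $u$-coefficient rather than a lone $v$-coefficient, and that supplies the missing decay. Both branches are needed.

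The second gap, which is a symptom of the first, is a duality mismatch: you conclude by bounding $|(\nabla\chi(u),v)_{\ell^2}|\lesssim\cdots\|v\|_{h^{1/2}}$ and invoking duality. Duality for $h^{1/2}$ reads $\|f\|_{h^{1/2}}=\sup\{|(f,v)_{\ell^2}|:\|v\|_{h^{-1/2}}\leq 1\}$, so a bound by $\|v\|_{h^{1/2}}$ only yields $\|\nabla\chi(u)\|_{h^{-1/2}}\lesssim\cdots$. This is the weaker estimate already accessible from the $\mathscr{H}$-norm (Lemma~\ref{grad-}), and it is not strong enough for the normal form argument (the $h^{1/2}$ bound is what makes the flow of $\chi$ well-defined in $h^{1/2}$ in Proposition~\ref{prop_ham_flow}). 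This is not a coincidence: once you discard $\Upsilon$, the honest outcome of the Cauchy--Schwarz bookkeeping is precisely the $h^{-1/2}$ estimate, and the $h^{1/2}$ target cannot be reached. The same issue propagates verbatim to your treatment of \eqref{estim-dgrad-chi}.

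To repair the argument along the lines you had in mind, you should (i) test against $v$ via $\tilde v_{\mathfrak{K}}=\langle\mathfrak{l}\rangle^{-1/2}|v_{\mathfrak{K}}|$ so that $\|\tilde v\|_{\ell^2}=\|v\|_{h^{-1/2}}$; (ii) keep the $\Upsilon^{-1/2}$ factor and split the $\mathbf{k}$-sum according to whether $k_r$ is distinct from all $k_j$, $j<r$ (where $\Upsilon\geq\langle\ell_r\rangle$ makes $\sum_{\mathfrak{l}}(2\mathfrak{l}+1)\langle a-\mathfrak{l}\rangle^{-2}\langle\mathfrak{l}\rangle^{-1}\lesssim 1$) or not (where the repeated index lets you pair $\tilde u_{k_{r-1}}\tilde v_{k_{r-1}}$ directly); (iii) count one $(\log M)^{1/2}$ per remaining $u$-factor, as in Lemma~\ref{grad-}, to get the stated $(\log M)^{(r-1)/2}$.
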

\proof Without loss of generality, we assume that $\| \chi \|_{\mathscr{C}}  = 1$.  We aim at proving \eqref{estim-grad-chi} by duality i.e., for every $v\in  \mathbb{C}^{\mathcal{T}_M}$, we want to estimate $|(\nabla \chi(u),v)_{\ell^2}|$. We denote $\tilde u_k=\langle \ell \rangle^{\frac12}|u_k|$ and $\tilde v_k=\langle \ell\rangle^{-\frac12}|v_k|$ for all $k=(\ell,m)\in \mathcal{T}_M$ in such way $\|\tilde u\|_{\ell^2}=\|u\|_{h^{1/2}}$ and $\|\tilde v\|_{\ell^2}=\|v\|_{h^{-1/2}}$. Since the coefficient of $\chi$ are symmetric, we have
\begin{equation}\label{e:differential} 
(\nabla \chi(u),v)_{\ell^2}  =   r\sum_{\sigma \in \{-1,1\}^{r}} \sum_{\mathbf{k}\in \mathcal{T}_M^{r} }  \chi_{\mathbf{k}}^{\sigma} u_{k_1}^{\sigma_1} \dots u_{k_{r-1}}^{\sigma_{r-1}} v_{k_r}^{\sigma_r}.
\end{equation}
Then by applying the triangular inequality, we get
\begin{equation*}
|(\nabla \chi(u),v)_{\ell^2}|\leq 2r \sum_{\sigma \in \{-1,1\}^{r-1}}\sum_{\mathbf{k}\in \mathcal{T}_M^r } \frac1{\langle \sigma_1 \ell_1 + \cdots + \sigma_{r-1} \ell_{r-1}-\ell_r\rangle \sqrt{ \Upsilon( \mathbf{k})}}\frac{ \tilde u_{k_1}}{\langle \ell_1\rangle} \dots \frac{\tilde u_{k_{r-1}}}{\langle \ell_{r-1}\rangle}\tilde v_{k_r}.
\end{equation*}
At this stage, we notice that, for all $\mathbf{k}\in\mathcal T_M^r$,  we have $\Upsilon( \mathbf{k})\geq \Upsilon'( \mathbf{k})$ where $\Upsilon'( \mathbf{k})=1$ except when $k_j\neq k_{r}$ for all $j=1,\cdots,r-1$ and in that case $\Upsilon'( \mathbf{k})=\langle \ell_{r}\rangle$. Thus 
\begin{align*}
|(\nabla \chi(u),v)_{\ell^2}|&\leq 2r \sum_{\sigma \in \{-1,1\}^{r-1}}\sum_{\mathbf{k}\in \mathcal{T}_M^r } \frac1{\langle \sigma_1 \ell_1 + \cdots + \sigma_{r-1} \ell_{r-1}-\ell_r\rangle \sqrt{\langle \ell_{r}\rangle}}\frac{ \tilde u_{k_1}}{\langle \ell_1\rangle} \dots \frac{\tilde u_{k_{r-1}}}{\langle \ell_{r-1}\rangle}\tilde v_{k_r}\\
&+2r \sum_{\sigma \in \{-1,1\}^{r-1}}\sum_{\substack{\mathbf{k}\in \mathcal{T}_M^r\\ \exists  1\leq i\leq r-1:\ k_r=k_i}} \frac1{\langle \sigma_1 \ell_1 + \cdots + \sigma_{r-1} \ell_{r-1}-\ell_r\rangle}\frac{ \tilde u_{k_1}}{\langle \ell_1\rangle} \dots \frac{\tilde u_{k_{r-1}}}{\langle \ell_{r-1}\rangle}\tilde v_{k_r}\\
&=2r(\Sigma_1+\Sigma_2). 
\end{align*}
First we estimate $\Sigma_1$
$$
\Sigma_1= \sum_{\sigma \in \{-1,1\}^{r-1}}\sum_{\mathbf{k}\in \mathcal{T}_M^{r}}
\frac{ \tilde u_{k_1}}{\langle \ell_1\rangle} \dots \frac{\tilde u_{k_{r-1}}}{\langle \ell_{r-1}\rangle}
 \frac{\tilde v_{k_r}}{\langle \sigma_1 \ell_1 + \cdots + \sigma_{r-1} \ell_{r-1}-\ell_r\rangle \langle \ell_r\rangle^{\frac12}}.$$
 We notice that 
$$ 
\sum_{k=(\ell,m) \in \mathcal{T}_M } \frac1{\langle \ell+a\rangle^2 \langle \ell \rangle} = \sum_{\ell=0}^M \frac{2\ell+1}{\langle \ell \rangle}  \frac1{\langle \ell+a\rangle^2 }  \leq \sum_{j\in\mathbb Z}\frac4{\langle j\rangle^2}\lesssim 1
$$
uniformly with respect to $a\in\mathbb R$ and 
$$ \sum_{k = (\ell,m) \in \mathcal{T}_M } \frac1{\langle \ell\rangle^2}\lesssim \log (M). $$
Thus by Cauchy-Schwarz we get
\begin{align*}
\Sigma_{1}&\lesssim_r \|u\|_{h^{1/2}}^{r-1}(\log (M) )^{(r-1)/2} \|v\|_{h^{-1/2}}.
\end{align*}
It remains to estimate $\Sigma_2$. We can assume without lost of generality, but paying an extra factor $r$, that  $ k_{r-1}= k_r$. Then, by Cauchy-Schwarz, we get
\begin{align*}
\Sigma_{2}&\leq r 2^{r-1} \sum_{k_{r-1}\in \mathcal{T}_M } \tilde  u_{k_{r-1}}\tilde v_{k_{r-1}}  \sum_{k = (\ell,m)\in \mathcal{T}_M^{r-2} } \frac{ \tilde u_{k_1}}{\langle \ell_1\rangle} \dots \frac{\tilde u_{k_{r-2}}}{\langle \ell_{r-2}\rangle}\\
&\lesssim_r \|u\|_{h^{1/2}}^{r-1}(\log (M) )^{(r-2)/2} \|v\|_{h^{-1/2}}. \end{align*}
Putting together the estimates of $\Sigma_1$ and $\Sigma_{2}$ we conclude that, for all $v\in \in \mathbb{C}^{\mathcal{T}_M}$,
$$|(\nabla \chi(u),v)|\lesssim_r (\log (M) )^{(r-1)/2} \|v\|_{h^{-1/2}}\|u\|_{h^{1/2}}^{r-1}$$
which in turn implies \eqref{estim-grad-chi}.\\
To prove \eqref{estim-dgrad-chi} we just notice that since $\nabla \chi(u)$ is an homogeneous polynomial, it can be viewed as the trace of a $(r-1)$-linear map on $\mathbb{C}^{\mathcal{T}_M}$: $\nabla \chi(u)=F(u,\cdots,u)$ with $F$ that can be expressed using~\eqref{e:differential}. Thus, following the above proof, $F$ satisfies
$$\| F(u^{(1)},\cdots,u^{(r-1)})\|_{h^{1/2}}\lesssim_r (\log (M) )^{(r-1)/2} \|u^{(1)}\|_{h^{1/2}}\cdots \|u^{(r-1)}\|_{h^{1/2}}.$$
Then, since $d \nabla \chi(u)(v)=F(v,u,\cdots,u)+\cdots+F(u,\cdots,u,v)$, we deduce \eqref{estim-dgrad-chi}.
\endproof

Thanks to a standard duality argument, we rewrite the estimate \eqref{estim-dgrad-chi} in a negative Sobolev space.
\begin{corollary} Let $M\geq 2$, $r\geq 2$. For all $\chi\in \mathscr{H}_{M}^r$ and $u\in \mathbb{C}^{\mathcal{T}_M}$, we have
\begin{equation}\label{eq_chocolat_chaud}
 \| \mathrm{d}\nabla \chi(u) \|_{\mathscr{L}(h^{-1/2})}  \lesssim_r  (\log (M) )^{(r-1)/2} \| \chi \|_{\mathscr{C}} \| u\|_{h^{1/2}}^{r-2}.
\end{equation}
\end{corollary}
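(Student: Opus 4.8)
The plan is to deduce \eqref{eq_chocolat_chaud} from \eqref{estim-dgrad-chi} by a duality argument, the key point being that the operator $\mathrm{d}\nabla\chi(u)$ is self-adjoint for the $\ell^2$ scalar product.

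First I would observe that, since $\chi$ is a real-valued smooth function on the real Euclidean space $\mathbb{C}^{\mathcal{T}_M}$ and $\nabla\chi$ is its $\ell^2$-gradient, the operator $A:=\mathrm{d}\nabla\chi(u)$ is exactly the Hessian of $\chi$ at $u$. Hence, for all $v,w\in\mathbb{C}^{\mathcal{T}_M}$,
$$
(Av,w)_{\ell^2}=\mathrm{d}^2\chi(u)(v,w)=(v,Aw)_{\ell^2},
$$
so that $A$ is self-adjoint with respect to $(\cdot,\cdot)_{\ell^2}$. (Alternatively, this symmetry can be read off directly from the explicit $(r-1)$-linear expression obtained by differentiating \eqref{e:differential}.)

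Next I would recall that the pairing $(\cdot,\cdot)_{\ell^2}$ puts $h^{1/2}$ and $h^{-1/2}$ in isometric duality: by Cauchy--Schwarz, $|(v,w)_{\ell^2}|\leq\|v\|_{h^{-1/2}}\|w\|_{h^{1/2}}$, and the supremum over $\|w\|_{h^{1/2}}\leq 1$ is attained by taking $w$ proportional to $(\langle\ell\rangle^{-1}v_k)_{k}$, which gives $\|v\|_{h^{-1/2}}=\sup_{\|w\|_{h^{1/2}}\leq 1}(v,w)_{\ell^2}$. Under this identification, the Banach-space transpose of any linear operator $B$ on $\mathbb{C}^{\mathcal{T}_M}$, viewed as acting on $h^{1/2}$, is its $\ell^2$-adjoint $B^{*}$ acting on $h^{-1/2}$, and these have the same operator norm; concretely, $\|B^{*}v\|_{h^{-1/2}}=\sup_{\|w\|_{h^{1/2}}\leq 1}(v,Bw)_{\ell^2}\leq\|B\|_{\mathscr{L}(h^{1/2})}\|v\|_{h^{-1/2}}$, whence $\|B^{*}\|_{\mathscr{L}(h^{-1/2})}\leq\|B\|_{\mathscr{L}(h^{1/2})}$.

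Finally, applying this with $B=A$ and using $A^{*}=A$ yields $\|\mathrm{d}\nabla\chi(u)\|_{\mathscr{L}(h^{-1/2})}\leq\|\mathrm{d}\nabla\chi(u)\|_{\mathscr{L}(h^{1/2})}$, and \eqref{estim-dgrad-chi} gives the claimed bound. There is no genuine obstacle here: the only thing to keep in mind is that all spaces are regarded as real vector spaces (the gradient and its differential are only $\mathbb{R}$-linear), which is precisely the setting in which the Hessian is symmetric and the duality $(h^{1/2})'\simeq h^{-1/2}$ via $(\cdot,\cdot)_{\ell^2}$ is the standard one.
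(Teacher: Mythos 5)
Your argument is correct and is essentially the paper's own proof: both rely on the symmetry of the Hessian $\mathrm{d}\nabla\chi(u)$ with respect to $(\cdot,\cdot)_{\ell^2}$ (you invoke it directly, the paper derives it via Schwarz's theorem) together with the $\ell^2$-duality between $h^{1/2}$ and $h^{-1/2}$, to get $\|\mathrm{d}\nabla\chi(u)\|_{\mathscr{L}(h^{-1/2})}\leq\|\mathrm{d}\nabla\chi(u)\|_{\mathscr{L}(h^{1/2})}$ and then conclude by \eqref{estim-dgrad-chi}. The only cosmetic difference is that you factor the argument through a general statement about transposes, whereas the paper carries out the chain of equalities in one displayed computation.
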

\proof
By duality  we have
$$
\sup_{\substack{v\in \mathbb{C}^{\mathcal{T}_M} \\ \| v\|_{h^{-1/2}}\leq 1}}   \| \mathrm{d}\nabla \chi(u)(v) \|_{h^{-1/2}} =  \sup_{\substack{v\in \mathbb{C}^{\mathcal{T}_M} \\ \| v\|_{h^{-1/2}}\leq 1}} \sup_{\substack{w\in \mathbb{C}^{\mathcal{T}_M} \\ \| w\|_{h^{1/2}}\leq 1}}   (w, \mathrm{d} \nabla \chi(u)(v) )_{\ell^2}  .
$$
Then by applying the Schwarz theorem we have
\begin{multline*}
(w, \mathrm{d} \nabla \chi(u)(v) )_{\ell^2}   = \mathrm{d}[ (w,\nabla \chi(u) )_{\ell^2} ](v)   = \mathrm{d}[ \mathrm{d} \chi(u)(w)  ](v) =  \mathrm{d}^2 \chi(u)(w)(v)  \\= \mathrm{d}^2 \chi(u)(v)(w) = \mathrm{d}[ (v, \nabla \chi(u) )_{\ell^2} ](w)  = (v, \mathrm{d} \nabla \chi(u)(w) )_{\ell^2}.
\end{multline*}
Therefore
\begin{align*}
\sup_{\substack{v\in \mathbb{C}^{\mathcal{T}_M} \\ \| v\|_{h^{-1/2}}\leq 1}}   \| \mathrm{d} \nabla \chi(u)(v) \|_{h^{-1/2}}& = \! \! \sup_{\substack{w\in \mathbb{C}^{\mathcal{T}_M} \\ \| w\|_{h^{1/2}}\leq 1}}   \sup_{\substack{v\in \mathbb{C}^{\mathcal{T}_M} \\ \| v\|_{h^{-1/2}}\leq 1}}   (v, \mathrm{d} \nabla \chi(u)(w) )_{\ell^2}  = \! \!  \sup_{\substack{w\in \mathbb{C}^{\mathcal{T}_M} \\ \| w\|_{h^{1/2}}\leq 1}}     \| \mathrm{d} \nabla \chi(u)(w) \|_{h^{1/2}} 
\\&= \|\mathrm{d} \nabla \chi(u)\|_{\mathscr{L}(h^{1/2})}.
\end{align*}
As a consequence, \eqref{eq_chocolat_chaud} is just a corollary of the estimate \eqref{estim-dgrad-chi}.
\endproof

Finally we define the flow associated with an Hamiltonian in $\mathscr{H}_{M}^r$:
\begin{proposition}
\label{prop_ham_flow} Let $M\geq 2$, $r\geq 3$ and $\chi \in \mathscr{H}_{M}^r$. There exist 
\begin{equation}\label{eps0}
\varepsilon_0 \gtrsim_{r} \big( (\log (M) )^{(r-1)/2} \|\chi \|_{\mathscr{C}} \big)^{-1/(r-2)}
\end{equation}
 and a smooth map
$$
\Phi_\chi : \left\{ \begin{array}{cll} [-1,1] \times B_{h^{1/2}(\mathbb{C}^{\mathcal{T}_M})}(0,\varepsilon_0) &\to& \mathbb{C}^{\mathcal{T}_M} \\ (t,u) &\mapsto& \Phi_\chi^t(u) \end{array} \right.
$$
solving the equation
\begin{equation}\label{flow}
-i\partial_t \Phi_\chi = ( \nabla \chi)\circ \Phi_\chi,
\end{equation}
and such that for all $t\in [-1,1]$, $\Phi_\chi^t$ is symplectic, close to the identity
\begin{equation}
\label{eq_dindon}
\forall u \in  B_{h^{1/2}(\mathbb{C}^{\mathcal{T}_M})}(0,\varepsilon_0), \quad \| \Phi_\chi^t u - u \|_{h^{1/2}} \leq \Big(\frac{\| u \|_{h^{1/2}}}{\varepsilon_0} \Big)^{r-2} \| u \|_{h^{1/2}},
\end{equation}
invertible
\begin{equation}
\label{eq_invertibility}
\|\Phi_\chi^{t} (u) \|_{h^{1/2}} < \varepsilon_0 \quad \Rightarrow \quad  \Phi_\chi^{-t}\circ  \Phi_\chi^{t} (u) = u.
\end{equation}
Moreover, its differential enjoys the estimate
\begin{equation}
\label{eq_petanque}
\forall u \in  B_{h^{1/2}(\mathbb{C}^{\mathcal{T}_M})}(0,\varepsilon_0),\forall \sigma \in \{-1,1\},\quad \| \mathrm{d} \Phi_\chi^t (u)\|_{\mathscr{L}(h^{\sigma/2})}\leq 2.
\end{equation}
\end{proposition}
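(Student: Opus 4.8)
The plan is to construct $\Phi_\chi^t$ as the flow of the ODE~\eqref{flow} and to run a standard a~priori estimate/bootstrap argument, using Lemma~\ref{grad+} (the $\mathscr{C}$-norm bounds on $\nabla\chi$ and $\mathrm{d}\nabla\chi$) as the only analytic input. Since $\mathbb{C}^{\mathcal{T}_M}$ is finite-dimensional and $\chi$ is a polynomial, $\nabla\chi$ is smooth and locally Lipschitz, so Cauchy--Lipschitz gives a unique maximal solution $t\mapsto\Phi_\chi^t(u)$ for each $u$; the whole issue is to show this solution survives up to $|t|\le 1$ with the quantitative control claimed, as long as $u$ lies in a ball of radius $\varepsilon_0$ with $\varepsilon_0$ as in~\eqref{eps0}.

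First I would fix the constant $c_r$ from Lemma~\ref{grad+} so that $\|\nabla\chi(v)\|_{h^{1/2}}\le c_r (\log M)^{(r-1)/2}\|\chi\|_{\mathscr{C}}\|v\|_{h^{1/2}}^{r-1}$, and set $\varepsilon_0 := \big(4 c_r (\log M)^{(r-1)/2}\|\chi\|_{\mathscr{C}}\big)^{-1/(r-2)}$, which is of the form~\eqref{eps0}. Starting from $\|u\|_{h^{1/2}}<\varepsilon_0$, a continuity/bootstrap argument shows that as long as $\|\Phi_\chi^t(u)\|_{h^{1/2}}\le 2\varepsilon_0$ one has $\frac{d}{dt}\|\Phi_\chi^t(u)\|_{h^{1/2}}\le c_r(\log M)^{(r-1)/2}\|\chi\|_{\mathscr{C}}(2\varepsilon_0)^{r-1}= \tfrac14(2\varepsilon_0)\cdot\tfrac{(2\varepsilon_0)^{r-2}}{\varepsilon_0^{r-2}/2}\cdots$; more cleanly, integrating Gr\"onwall-type from $0$ to $t$ with $|t|\le1$ keeps $\|\Phi_\chi^t(u)\|_{h^{1/2}}$ strictly below $2\varepsilon_0$, so the flow exists on $[-1,1]$ and never leaves $B(0,2\varepsilon_0)$. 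The same integral then gives, directly from~\eqref{flow}, $\|\Phi_\chi^t(u)-u\|_{h^{1/2}}\le\int_0^{|t|}\|\nabla\chi(\Phi_\chi^s(u))\|_{h^{1/2}}ds\le c_r(\log M)^{(r-1)/2}\|\chi\|_{\mathscr{C}}(2\varepsilon_0)^{r-1}$, and one checks this is $\le (\|u\|_{h^{1/2}}/\varepsilon_0)^{r-2}\|u\|_{h^{1/2}}$ after possibly shrinking $\varepsilon_0$ by a fixed $r$-dependent factor and using $\|u\|_{h^{1/2}}<\varepsilon_0$ — this is the estimate~\eqref{eq_dindon}. Symplecticity of $\Phi_\chi^t$ is the classical fact that the time-$t$ flow of a Hamiltonian vector field preserves the symplectic form $(i\,\cdot\,,\,\cdot\,)_{\ell^2}$: one differentiates $(i\,\mathrm{d}\Phi_\chi^t(v),\mathrm{d}\Phi_\chi^t(w))_{\ell^2}$ in $t$ and uses that $\mathrm{d}\nabla\chi$ is self-adjoint (the Schwarz-theorem computation already carried out in the Corollary preceding this Proposition) to see the derivative vanishes. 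Invertibility~\eqref{eq_invertibility} follows from uniqueness in Cauchy--Lipschitz: if $\|\Phi_\chi^t(u)\|_{h^{1/2}}<\varepsilon_0$ then $\Phi_\chi^s(u)$ stays in $B(0,2\varepsilon_0)$ for $s$ between $0$ and $t$, hence $\Phi_\chi^{-t}$ applied to $\Phi_\chi^t(u)$ follows the reversed trajectory back to $u$.

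For the differential estimate~\eqref{eq_petanque}, I would differentiate~\eqref{flow} in $u$ to get the variational equation $-i\partial_t\,\mathrm{d}\Phi_\chi^t(u)=\big(\mathrm{d}\nabla\chi\big)(\Phi_\chi^t(u))\circ\mathrm{d}\Phi_\chi^t(u)$ with $\mathrm{d}\Phi_\chi^0(u)=\mathrm{Id}$, then apply~\eqref{estim-dgrad-chi} (for $\sigma=1$) or its dual~\eqref{eq_chocolat_chaud} (for $\sigma=-1$) to bound $\|(\mathrm{d}\nabla\chi)(\Phi_\chi^t(u))\|_{\mathscr{L}(h^{\sigma/2})}\lesssim_r(\log M)^{(r-1)/2}\|\chi\|_{\mathscr{C}}(2\varepsilon_0)^{r-2}$, which by the choice of $\varepsilon_0$ is $\le \tfrac12$ (shrinking $\varepsilon_0$ by a further fixed factor if needed). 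Gr\"onwall on $\|\mathrm{d}\Phi_\chi^t(u)\|_{\mathscr{L}(h^{\sigma/2})}$ over $|t|\le1$ then yields the bound $e^{1/2}<2$. The main obstacle — really the only nontrivial point — is bookkeeping the constants so that the \emph{same} $\varepsilon_0$ (up to $r$-dependent factors, which is all that is claimed) simultaneously: (i) confines the trajectory, (ii) makes~\eqref{eq_dindon} hold with the clean exponent $r-2$ and constant $1$, and (iii) makes the variational Gr\"onwall constant below $2$; this is handled by choosing $\varepsilon_0$ as the minimum of finitely many expressions each of the form~\eqref{eps0}, which is again of that form. Everything else is the routine Picard/Gr\"onwall machinery for an autonomous smooth ODE on a finite-dimensional space, with the $\mathscr{C}$-norm bounds of Lemma~\ref{grad+} doing all the work.
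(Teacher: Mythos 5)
Your overall architecture (Cauchy--Lipschitz for local existence, bootstrap to confine the trajectory, integrate \eqref{flow} to get closeness to the identity, symplecticity from the Hamiltonian structure, variational equation plus Gr\"onwall for \eqref{eq_petanque}) is the same as the paper's, and the $\mathscr{C}$-norm bounds of Lemma~\ref{grad+} are indeed the only analytic input needed. The treatments of symplecticity, invertibility, and the differential estimate are fine.

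There is, however, a genuine gap in your derivation of \eqref{eq_dindon}. You bootstrap at the fixed scale $2\varepsilon_0$, i.e.\ you only keep track of $\|\Phi_\chi^s(u)\|_{h^{1/2}}\le 2\varepsilon_0$ along the trajectory. Feeding this into $\|\Phi_\chi^t(u)-u\|_{h^{1/2}}\le\int_0^{|t|}\|\nabla\chi(\Phi_\chi^s(u))\|_{h^{1/2}}\,\mathrm{d}s$ then yields the bound $c_r(\log M)^{(r-1)/2}\|\chi\|_{\mathscr{C}}(2\varepsilon_0)^{r-1}$, which is a \emph{constant} independent of $u$. You then assert that, after shrinking $\varepsilon_0$, this constant is $\le(\|u\|_{h^{1/2}}/\varepsilon_0)^{r-2}\|u\|_{h^{1/2}}$; but that right-hand side tends to $0$ as $\|u\|_{h^{1/2}}\to 0$, so no shrinking of $\varepsilon_0$ can make a positive constant dominated by it uniformly over $u\in B(0,\varepsilon_0)$. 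The inequality as written is simply false for small $u$.

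The fix — and this is what the paper does — is to bootstrap at a scale that \emph{tracks the initial datum}: set $\varepsilon:=\|u\|_{h^{1/2}}$ and define $t_0:=\sup\{t\ge 0:\ \forall\,0\le s\le t,\ \|\Phi_\chi^s(u)\|_{h^{1/2}}<2\varepsilon\}$. On $[0,t_0)$ one now has $\|\nabla\chi(\Phi_\chi^s(u))\|_{h^{1/2}}\lesssim_r(\log M)^{(r-1)/2}\|\chi\|_{\mathscr{C}}(2\varepsilon)^{r-1}$, which is of order $\varepsilon^{r-1}$. Integrating gives $\|\Phi_\chi^t(u)-u\|_{h^{1/2}}\lesssim_r(\log M)^{(r-1)/2}\|\chi\|_{\mathscr{C}}\|u\|_{h^{1/2}}^{r-1}$, and since by the choice of $\varepsilon_0$ one has $(\log M)^{(r-1)/2}\|\chi\|_{\mathscr{C}}\approx_r\varepsilon_0^{-(r-2)}$, this is precisely $\le(\|u\|_{h^{1/2}}/\varepsilon_0)^{r-2}\|u\|_{h^{1/2}}$ once constants are absorbed. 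The same $\varepsilon$-scaled bootstrap also yields, for $\varepsilon<\varepsilon_0$ and $|t|\le 1$, that $t_0\ge 1$ by a contradiction argument (if $t_0<1$ one would get $\varepsilon\le\|\Phi_\chi^{t_0}(u)-u\|_{h^{1/2}}\lesssim_r\varepsilon^{r-1}(\log M)^{(r-1)/2}\|\chi\|_{\mathscr{C}}$, forcing $\varepsilon\gtrsim_r\varepsilon_0$). Note that the $2\varepsilon_0$-scale bootstrap remains adequate for \eqref{eq_petanque}, where you only need $\|\mathrm{d}\nabla\chi(\Phi_\chi^s(u))\|_{\mathscr{L}(h^{\pm 1/2})}$ below a fixed small constant, not a bound that vanishes with $\|u\|$; so that part of your argument is unaffected.
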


\proof
We note that \eqref{flow} is an ODE associated with the smooth vector field $X_{\chi}=i\nabla\chi$ and therefore we deduce from the Cauchy--Lipschitz Theorem that the flow $\Phi_\chi^t(u)$ is locally well defined for every $u\in \mathbb{C}^{\mathcal{T}_M}$ on some maximal interval $(T_-(u),T_+(u))$ containing $0$. Let us first show that, if $\|u\|_{h^{1/2}}=\varepsilon$ is small enough, then the solution is defined up to time $1$, equivalently $T_+(u)\geq 1$. To see this, we set 
$$t_0:=\sup\left\{t\in [0,T_+(u)):\ \forall 0\leq s\leq t, \|\Phi_\chi^s(u)\|_{h^{1/2}}< 2\varepsilon\right\}>0.$$
In the case where $T_+(u)<\infty$, we note that $t_0<T_+(u)$ by the maximality of the interval of definition and we can verify that $t_0\geq 1$ provided $\varepsilon$ is chosen small enough. Indeed, if $t_0<1$, then we can write
\begin{align*}\label{dindon}\varepsilon\leq\|\Phi_\chi^{t_0}(u)-u\|_{h^{1/2}}&\leq \int_0^{t_0} \|( \nabla \chi)\circ \Phi_\chi^s(u)\|_{h^{1/2}}  ds\\&\leq C_r^{-(r-2)} t_0 (\log (M) )^{(r-1)/2} \varepsilon^{r-1}\|\chi \|_{\mathscr{C}},\end{align*}
for some constant $0<C_r\leq 1$ depending only on $r$ coming from~\eqref{estim-grad-chi}. From this, we infer
$$\varepsilon^{-1}
\left((\log (M) )^{(r-1)/2} \|\chi \|_{\mathscr{C}}\right)^{-\frac{1}{r-2}}\leq C_r^{-1} |t_0|^{\frac{1}{r-2}}.
$$
Thus, as long as $\varepsilon \leq C_r\left((\log (M) )^{(r-1)/2} \|\chi \|_{\mathscr{C}}\right)^{-\frac{1}{r-2}}$, we find that $t_0\geq 1$ and that the flow is well defined up to time $t=1$. The same holds in negative times. We now fix 
$$
\varepsilon_0:=\frac{ C_r}{2}\left((\log (M) )^{(r-1)/2} \|\chi \|_{\mathscr{C}}\right)^{-\frac{1}{r-2}}
$$ so that $t_0\geq 1$ for every $\|u\|_{h^{1/2}}=\varepsilon<\varepsilon_0$. Since $\Phi_\chi^t(u)$ is the flow associated with an Hamiltonian vector field, it is symplectic and invertible and we are left with the proof of~\eqref{eq_dindon} and~\eqref{eq_petanque}. For the former, we write as above, for $-1\leq t\leq 1$,
\begin{align*}\|\Phi_\chi^{t}(u)-u\|_{h^{1/2}}&\leq \Big|\int_0^{t} \|( \nabla \chi)\circ \Phi_\chi^s(u)\|_{h^{1/2}}  \mathrm{d}s\Big|\\&\leq C_r^{-(r-2)}\|\chi \|_{\mathscr{C}}(\log (M) )^{(r-1)/2} \|u\|_{h^{1/2}}^{r-1}\leq \left(\frac{\|u\|_{h^{1/2}}}{\varepsilon_0}\right)^{r-2}\|u\|_{h^{1/2}}.\end{align*}
It now remains to prove~\eqref{eq_petanque}. Up to decreasing the value of $\varepsilon_0$ a little bit (by a factor depending only on $r$), we can proceed as above by appealing \eqref{estim-dgrad-chi} and \eqref{eq_chocolat_chaud} and by writing
$$\mathrm{d} \Phi_\chi^t (u)=\text{Id}+\int_0^t \mathrm{d}\nabla\chi(\Phi_\chi^s(u))\circ\mathrm{d}\Phi_\chi^s(u)ds.$$


\endproof

\section{Birkhoff normal form}

In this section, we aim at  describing a procedure that allows to simplify, close to $u=0$, Hamiltonians on $\mathbb{C}^{\mathcal{T}_M}$ that are of the form
$$
H(u):=\frac12 \sum_{k\in \mathcal{T}_M} \omega_k |u_k|^2+P(u),
$$
where $P \in \mathscr{H}_M^p$. In other words, we will write a Birkhoff normal form for $H$ which means that, up to conjugation by a symplectomorphism and up to a small remainder term, $P$ can be replaced by a term Poisson commuting with the super actions composing the leading part of $H$:
$$\forall \ell \geq 0,\quad J_\ell(u) = \sum_{ m=-\ell}^{\ell} |u_{(\ell,m)}|^2.$$
This will be used in Section~\ref{sec-proofs} to put \eqref{eq:KG} into a Birkhoff normal form and to prove or main theorem.
From now on, we fix an integer $p \geq 3$ (the degree of the nonlinearity of \eqref{eq:KG}) and $\mu>0$ (the mass of \eqref{eq:KG}) making the frequencies ($\omega_{(\ell,m)} = \sqrt{\ell (\ell+1) + \mu}$)  non-resonant (in the sense of Proposition \ref{prop:strong_nr}). Our precise Birkhoff normal form statement reads as follows:

\begin{theorem} 
\label{thm_NF}
Let $a>0$, $C_p>0$ and $r\geq 1$. Then, there exist $\beta>1$ (independent of the choice of $\mu$) and $C>1$ such that the following holds.

For every $M\geq 2$, $N\geq 1$ and every polynomial Hamiltonian of the form $H: \mathbb{C}^{\mathcal{T}_M} \to \mathbb{R}$
$$
H = Z_2 + P^{(p)} \quad \mathrm{where} \quad Z_2(u) = \frac12 \sum_{k\in \mathcal{T}_M} \omega_k |u_k|^2,\quad P^{(p)} \in \mathscr{H}_M^p,\quad \| P^{(p)} \|_{\mathscr{H}} \leq C_p B^{a}
$$
with $B = \max( \log M , N)$, one can find $\varepsilon_2 \geq (C B^\beta)^{-1}$ and two smooth symplectic maps $\tau^{(0)}$ and $\tau^{(1)}$ making the following diagram to commute




\begin{equation}
\label{mon_beau_diagram_roi_des_forets}
\xymatrixcolsep{5pc} \xymatrix{  B_{h^{1/2}(\mathbb{C}^{\mathcal{T}_M})}(0, \varepsilon_2 ) \ar[r]^{ \tau^{(0)} }
 \ar@/_1pc/[rr]_{ \mathrm{id}_{\mathbb{C}^{\mathcal{T}_M}}} &  B_{h^{1/2}(\mathbb{C}^{\mathcal{T}_M})}(0,2\, \varepsilon_2)  \ar[r]^{ \hspace{1cm} \tau^{(1)} }  & 
 \mathbb{C}^{\mathcal{T}_M} } 
\end{equation}
and close to the identity
\begin{equation}
\label{eq:taucloseid}
\forall \nu\in \{0,1\}, \ \|u\|_{{h}^{1/2}} <2^{\nu}\varepsilon_2 \;\; \Rightarrow \;\;  
\|\tau^{(\nu)}(u)-u\|_{h^{1/2}} \leq  \left( \frac{\|u\|_{h^{1/2}}}{2^{\nu}\varepsilon_2} \right)^{p-2} \|u\|_{h^{1/2}}
\end{equation}
such that, on $ B_{h^{1/2}(\mathbb{C}^{\mathcal{T}_M})}(0,2\varepsilon_2)$, $H \circ \tau^{(1)}$ admits the decomposition
\begin{equation}\label{eq:NF}
H \circ \tau^{(1)} = Z_2+ Q^{\leq N}_{\mathrm{res}} + R
\end{equation}
where $Q^{\leq N}_{\mathrm{res}} : \mathbb{C}^{\mathcal{T}_M} \to \mathbb{R}$ is a polynomial of degree $r+p-1$ commuting with the low super-actions
\begin{equation}
\label{eq:ca_commte}
\forall \ell \in \mathbb{N}, \ \langle \ell \rangle \leq N \ \Rightarrow \ \{ J_\ell , Q^{\leq N}_{\mathrm{res}} \} = 0.
\end{equation}
Moreover, 
the remainder term $R$ is a smooth function on $B_{h^{1/2}(\mathbb{C}^{\mathcal{T}_M})}(0,2\varepsilon_2)$ satisfying
$$
\|\nabla R(u)\|_{h^{-1/2}} \leq C B^{\beta} \|u\|_{h^{1/2}}^{r+p-1},
$$
and, 
for all $\nu \in\{0,1\}$, we have the bounds
\begin{equation}
\label{eq:dtau}
\|\mathrm{d} \tau^{(\nu)}(u) \|_{\mathscr{L}(h^{1/2})} \leq 2^{r} \quad \mathrm{and} \quad \|\mathrm{d} \tau^{(\nu)}(u) \|_{\mathscr{L}(h^{-1/2})} \leq 2^{r}.
\end{equation}
\end{theorem}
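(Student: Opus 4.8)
The plan is to run the classical Birkhoff normal form iteration inside the functional framework of Section~\ref{sec:hamfo}: $r$ successive Lie transforms, the $j$-th one removing the non-resonant part of the homogeneous component of degree $n_j := p+j-1$. Starting from $H_0 := H = Z_2 + P^{(p)}$, at step $j$ one has $H_{j-1} = Z_2 + \sum_{n=p}^{n_j-1} Q^{(n)} + \sum_{n=n_j}^{p+r-1} P^{(n)} + \mathrm{Rem}_{j-1}$ with the $Q^{(n)}\in\mathscr{H}_M^n$ already resonant, the $P^{(n)}\in\mathscr{H}_M^{n}$ still to be treated, and $\mathrm{Rem}_{j-1}$ a smooth remainder gathering everything of ``degree $\ge p+r$''; one solves the cohomological equation $\{Z_2,\chi^{(j)}\}+P^{(n_j)}=Q^{(n_j)}$ in $\mathscr{H}_M^{n_j}$ (see below), takes the flow $\Phi_{\chi^{(j)}}$ from Proposition~\ref{prop_ham_flow} (applicable since $n_j\ge p\ge 3$), and sets $H_j := H_{j-1}\circ\Phi_{\chi^{(j)}}^1$. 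Because $p\ge 3$, every iterated bracket $\mathrm{ad}_{\chi^{(j)}}^k(\cdot)$ and every $\{Q^{(n)},\chi^{(j)}\}$ strictly raises the degree, so the only degree-$n_j$ contribution to $H_j$ is $P^{(n_j)}+\{Z_2,\chi^{(j)}\}=Q^{(n_j)}$; truncating the Taylor expansion of $H_{j-1}\circ\Phi_{\chi^{(j)}}^1$ at an order large enough that the first unexpanded bracket already has degree $\ge p+r$, and moving all degree-$\ge p+r$ pieces into $\mathrm{Rem}_j$, preserves the shape of $H_j$. After $r$ steps $H_r = Z_2 + Q^{\le N}_{\mathrm{res}} + R$ with $Q^{\le N}_{\mathrm{res}}:=\sum_{n=p}^{p+r-1}Q^{(n)}$ and $R:=\mathrm{Rem}_r$.

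\textbf{The cohomological equation.} This is the heart of the matter. In the coefficients of the decomposition recalled after Definition~\ref{defH}, one has $\{Z_2,\chi\}_{\mathbf{k}}^\sigma = c\, i\, \Omega(\sigma,\mathbf{k})\, \chi_{\mathbf{k}}^\sigma$ with $c\ne 0$ explicit and $\Omega$ given by~\eqref{eq:defOmega}. The plan is to put into $Q$ exactly the monomials with $\kappa(\sigma,\mathbf{k})>N$ (the smallest effective index~\eqref{eq:defkappa}) and to set $\chi_{\mathbf{k}}^\sigma := (c\, i\, \Omega(\sigma,\mathbf{k}))^{-1}P_{\mathbf{k}}^\sigma$ when $\kappa(\sigma,\mathbf{k})\le N$ and $0$ otherwise; the reality and symmetry conditions~\eqref{eq:def_real}--\eqref{eq:def_sym_cond} survive because $\Omega$ is odd and $\kappa$ is invariant under the relevant index symmetries. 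Each monomial retained in $Q$ obeys $\sum_{\ell_i=\ell}\sigma_i=0$ for every $\ell$ with $\langle\ell\rangle\le N$, hence $Q$ Poisson-commutes with all low super-actions, which is~\eqref{eq:ca_commte}. For the size of $\chi$, on $\{\kappa\le N\}$ Proposition~\ref{prop:strong_nr} gives $|\Omega(\sigma,\mathbf{k})|\ge\gamma_r\kappa(\sigma,\mathbf{k})^{-\alpha_r}\ge\gamma_r N^{-\alpha_r}\ge\gamma_r B^{-\alpha_r}$, while $|\omega_\ell-\ell|\lesssim_\mu 1$ yields $\langle\sigma_1\ell_1+\cdots+\sigma_r\ell_r\rangle\lesssim_{r,\mu}1+|\Omega(\sigma,\mathbf{k})|\lesssim_{r,\mu}B^{\alpha_r}\,|\Omega(\sigma,\mathbf{k})|$. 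Comparing~\eqref{eq:norm_HM} with~\eqref{eq:norm_CMd}, this is exactly the loss that the extra factor $\langle\sigma_1\ell_1+\cdots+\sigma_r\ell_r\rangle$ of the $\mathscr{C}$-norm can absorb, so $\|\chi\|_{\mathscr{C}}\lesssim_{r,\mu}B^{\alpha_r}\|P\|_{\mathscr{H}}$, with a $\mu$-independent exponent. Recognizing that this comparison, together with the new Diophantine condition (small divisors controlled by $\kappa$ rather than the third largest index) and the fact that $\{\kappa>N\}$ is precisely the set of monomials commuting with the low super-actions, is what makes the scheme work without regularity is, I expect, the main obstacle; the rest is bookkeeping.

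\textbf{Iterating and defining the maps.} Armed with this estimate and with Proposition~\ref{poisson} (using $\log M\le B$), and handling $Z_2$ separately through $\{Z_2,\chi^{(j)}\}=Q^{(n_j)}-P^{(n_j)}$, an induction on $j$ shows that the $\mathscr{H}$-norms of all tracked homogeneous components and the $\mathscr{C}$-norms of the $\chi^{(j)}$ remain $\le C_{r,p,\mu}B^{a_r}$ for some exponent $a_r$ depending only on $a,r,p$ and $\alpha_r$ (each of the finitely many brackets per step costs a $B$-power with $\mu$-independent exponent). Feeding $\|\chi^{(j)}\|_{\mathscr{C}}\le C_{r,p,\mu}B^{a_r+\alpha_r}$ into~\eqref{eps0} gives a uniform lower bound $\min_j\varepsilon_0^{(j)}\gtrsim_{r,p,\mu}B^{-b_r}$ with $b_r$ independent of $\mu$ (here $n_j-2\ge 1$). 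One then fixes $\varepsilon_2:=c_r\min_j\varepsilon_0^{(j)}$ with $c_r>0$ small enough, so $\varepsilon_2\ge(CB^\beta)^{-1}$ with $\beta:=b_r$, and sets $\tau^{(1)}:=\Phi_{\chi^{(1)}}^1\circ\cdots\circ\Phi_{\chi^{(r)}}^1$ and $\tau^{(0)}:=(\tau^{(1)})^{-1}=\Phi_{\chi^{(r)}}^{-1}\circ\cdots\circ\Phi_{\chi^{(1)}}^{-1}$. Smallness of $c_r$ ensures via~\eqref{eq_dindon} that every intermediate point stays where the next flow is defined, that the diagram~\eqref{mon_beau_diagram_roi_des_forets} commutes (by~\eqref{eq_invertibility}), that composing the closeness-to-identity bounds yields~\eqref{eq:taucloseid} (using $n_j-2\ge p-2$ and $\|u\|/\varepsilon_0<1$), and that the chain rule with~\eqref{eq_petanque} gives $\|\mathrm{d}\tau^{(\nu)}(u)\|_{\mathscr{L}(h^{1/2})}\le 2^r$ and $\|\mathrm{d}\tau^{(\nu)}(u)\|_{\mathscr{L}(h^{-1/2})}\le 2^r$, i.e.~\eqref{eq:dtau}. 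By construction $H\circ\tau^{(1)}=H_r$.

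\textbf{The remainder.} Finally, $R=\mathrm{Rem}_r$ is a finite sum of (i) homogeneous polynomials of degree between $p+r$ and some $D(r,p)$ with $\mathscr{H}$-norm $\lesssim_{r,p,\mu}B^{a_r}$, to which Lemma~\ref{grad-} applies directly, and (ii) terms of the form $G\circ\Phi_{\chi^{(j)}}^{t}\circ\Phi_{\chi^{(j+1)}}^{1}\circ\cdots\circ\Phi_{\chi^{(r)}}^{1}$ integrated against a bounded kernel in $t$, where $G$ is a homogeneous polynomial of degree $\ge p+r$ with controlled $\mathscr{H}$-norm; for these one writes $\nabla(G\circ\Psi)(u)=\mathrm{d}\Psi(u)^{*}\nabla G(\Psi(u))$ and uses $\|\mathrm{d}\Psi(u)^{*}\|_{\mathscr{L}(h^{-1/2})}=\|\mathrm{d}\Psi(u)\|_{\mathscr{L}(h^{1/2})}\le 2^r$ from~\eqref{eq_petanque}, $\|\Psi(u)\|_{h^{1/2}}\le 2\|u\|_{h^{1/2}}$ from~\eqref{eq_dindon}, and Lemma~\ref{grad-} for $\nabla G$. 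Since the polynomial involved always has degree $\ge p+r$ and $\|u\|_{h^{1/2}}<2\varepsilon_2<1$, each contribution is $\lesssim_{r,p,\mu}B^{\beta}\|u\|_{h^{1/2}}^{p+r-1}$ after possibly enlarging $\beta$ and $C$, which yields $\|\nabla R(u)\|_{h^{-1/2}}\le CB^{\beta}\|u\|_{h^{1/2}}^{p+r-1}$ and finishes the proof.
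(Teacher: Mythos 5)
Your proposal is correct and follows essentially the same route as the paper's proof: the same iterated Lie-transform scheme with the degree-$n$ component split according to $\kappa(\sigma,\mathbf{k})\le N$ vs.\ $\kappa(\sigma,\mathbf{k})>N$, the same $\mathscr{C}$-norm estimate for $\chi$ obtained by comparing $\langle\sigma_1\ell_1+\cdots+\sigma_n\ell_n\rangle\lesssim_{n,\mu}1+|\Omega(\sigma,\mathbf{k})|$ with the lower bound from Proposition~\ref{prop:strong_nr}, and the same control of the flows, iterated brackets and remainder through Propositions~\ref{poisson} and~\ref{prop_ham_flow}, Lemma~\ref{grad-} and~\eqref{eq_petanque}. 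One minor sign slip: since the Taylor expansion of $H\circ\Phi_\chi^1$ generates $\mathrm{ad}_\chi Z_2=\{\chi,Z_2\}$, the cohomological equation should be $\{\chi^{(j)},Z_2\}+P^{(n_j)}=Q^{(n_j)}$ (equivalently $\{\chi^{(j)},Z_2\}=-L^{(j)}$ as in the paper) rather than $\{Z_2,\chi^{(j)}\}+P^{(n_j)}=Q^{(n_j)}$, which only flips the sign of $\chi^{(j)}$ and affects nothing else.
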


\begin{proof}
The proof is similar to the one of Theorem 4.1 of \cite{BG21}. Nevertheless, here, we have a weaker control of the remainder term ($h^{-1/2}$ instead of $h^{1/2}$ in \cite{BG21}) and the vector field and Poisson bracket estimates of Section~\ref{sec:hamfo} generate new constants we have to track. As usual, we proceed by induction. More precisely, we choose $n\in \llbracket p,r+p\rrbracket$ as induction index and assume that Theorem \ref{thm_NF} holds if 
\begin{itemize}
\item we replace \eqref{eq:NF} by
\begin{equation}
\label{eq:newNF}
H \circ \tau^{(1)} = Z_2+ \sum_{j=p}^{r+p-1} Q^{(j)} + R \quad \mathrm{where} \quad Q^{(j)} \in \mathscr{H}_M^{j} \quad \mathrm{satisfies} \quad \|Q^{(j)}\|_{\mathscr{H}} \leq C B^\beta.
\end{equation}
\item we replace  \eqref{eq:ca_commte} by
\begin{equation}
\label{eq:ca_recommte}
\forall \ell \in \mathbb{N},\forall j\in \llbracket p,n-1\rrbracket, \ \langle \ell \rangle \leq N \ \Rightarrow \ \{ J_\ell , Q^{(j)} \} = 0.
\end{equation}
\item we replace \eqref{eq:dtau} by
\begin{equation}
\label{eq:redtau}
\|\mathrm{d} \tau^{(\nu)}(u) \|_{\mathscr{L}(h^{1/2})} \leq 2^{n-p} \quad \mathrm{and} \quad \|\mathrm{d} \tau^{(\nu)}(u) \|_{\mathscr{L}(h^{-1/2})} \leq 2^{n-p}.
\end{equation}
\end{itemize}
Even if we do not write it explicitely, we note that each polynomial $Q^{(j)}$ depends implicitely on $n$ as well as $R$, $\varepsilon_2$ and $\tau^{(\nu)}$. Moreover, we suppose that $R$ verifies the quantitative estimates of the theorem and that each $Q^{(j)}$ enjoys the same norm estimate as $P^{(p)}$ up to increasing the value of the constant $C_p$ (in a way that depends only on $(n,\mu,a)$) and up to increasing the value of $a$ and $\beta$ (in a way that depends only on $(n,a)$). If $n=p$, there is nothing to do: it is in fact enough to choose $\tau^{(0)}=\tau^{(1)} = \mathrm{id}_{\mathbb{C}^{\mathcal{T}_M}}$, $R=0$, $Q^{(p)}=P^{(p)}$, $Q^{(j)}=0$ for $j>p$ and $\beta=a$. For the sake of clarity, we will denote with a symbol $\sharp$ the objects we are going to introduce at the step $n+1$ (e.g. $\tau^{(0)}_\sharp, \beta_\sharp$...). Before entering the details of the proof, recall that one goes formally from step $n$ to $n+1$ by conjugating the normal form~\eqref{eq:newNF} by the time one map of the Hamiltonian flow of some well chosen function $\chi$. The function $\chi$ is chosen in such a way that the terms of $Q^{(n)}$ that do not commute with the expected super actions are cancelled out by solving a certain cohomological equation.

\medskip

\noindent \underline{$\star$ \emph{Decomposition of $Q^{(n)}$}.} We split the polynomial $Q^{(n)}$ as $Q = L + U$, the Hamiltonians
 $L,U \in  \mathscr{H}_{M}^{n}$ being defined by
$$
 L_{\mathbf{k}}^\sigma = \left\{\begin{array}{cll} (Q^{(n)})_{\mathbf{k}}^\sigma & \mathrm{if} & \kappa(\sigma,{\mathbf{k}}) \leq N \\
0 & \mathrm{otherwise}
\end{array} \right. \quad \mathrm{and} \quad  U_{\mathbf{k}}^\sigma = \left\{\begin{array}{cll} 0 & \mathrm{if} &  \kappa(\sigma,{\mathbf{k}}) \leq N \\
(Q^{(n)})_{\mathbf{k}}^\sigma & \mathrm{otherwise}
\end{array} \right.
$$
where $\kappa(\sigma,\mathbf{k})$ is defined in \eqref{eq:defkappa} and denotes the smallest effective index of the small divisor $\Omega(\sigma,\mathbf{k})$ defined in~\eqref{eq:defOmega}. Observe that, since these Hamiltonians are extracted from $Q^{(n)}$, they enjoy the same norm estimates.

\medskip

\noindent \underline{$\star$ \emph{$U$ commutes with the low super-actions}.} Indeed, a direct computation shows that if $\langle \ell  \rangle\leq N$, we have
\begin{align*}
\{ J_\ell , U \} &=2 \, i   \sum_{\sigma \in \{-1,1\}^{n}} \sum_{\mathbf{k} \in \mathcal{T}_M^n } (\sigma_1 \mathbbm{1}_{\omega_{k_1} = \omega_{(\ell,0)}} + \dots + \sigma_n  \mathbbm{1}_{\omega_{k_n} = \omega_{(\ell,0)}})  U_{\mathbf{k}}^{\sigma} u_{k_1}^{\sigma_1} \dots  u_{k_n}^{\sigma_n} 
\\
&=2 \, i  \sum_{\sigma \in \{-1,1\}^{n}} \sum_{\mathbf{k} \in \mathcal{T}_M^n } \big(  \sum_{j : \exists m, \ k_j=(\ell,m)} \sigma_j\big)  U_{\mathbf{k}}^{\sigma} u_{k_1}^{\sigma_1} \dots u_{k_n}^{\sigma_n} 
.
\end{align*}
However, since $ \langle \ell \rangle \leq N$, by definition of $U$ and $\kappa$ (see \eqref{eq:defkappa}), either $ \sum_{j : \exists m, \ k_j=(\ell,m)} \sigma_j$  vanishes or $U_{\mathbf{k}}^{\sigma}$  vanishes. Consequently $U$ and $J_\ell$ commute : $\{ J_\ell , U \}(u)=0$. We emphasize that the definition of $\kappa$ as the smallest effective index is crucial here. Without it, we would need some smoothness assumption on $u$ to control these commutators. Due to that, we will have much more terms to solve in the upcoming cohomological equation but we will be able to handle these extra factors thanks to the control the small divisors by $\kappa$ given by Proposition~\ref{prop:strong_nr}.

\medskip

\noindent \underline{$\star$ \emph{The cohomological equation.}} The mass $\mu$ has been fixed to make the frequencies strongly non-resonant (according to Proposition \ref{prop:strong_nr}). Therefore, there exist $\gamma \in (0,1)$ (depending only on $(n,\mu)$) and $\alpha>1$ (depending only on $n$) such that 
\begin{equation}
\label{eq:lowboundOm}
\kappa(\sigma,{\mathbf{k}}) \leq N
\quad \Rightarrow \quad \Omega(\sigma,\mathbf{k}) \geq \gamma N^{-\alpha}=:\delta.
\end{equation}
Therefore we set $\chi \in \mathscr{H}_{M}^{n}$ the Hamiltonian defined by
$$
\chi_{\mathbf{k}}^\sigma := \frac{L_{\mathbf{k}}^\sigma}{i \Omega(\sigma,\mathbf{k})} \quad \mathrm{if} \quad \kappa(\sigma,{\mathbf{k}}) \leq N
\quad \mathrm{and} \quad \chi_{\mathbf{k}}^\sigma = 0 \quad \mathrm{otherwise}.
$$
A direct computation shows that $\chi$ is a solution of the cohomological equation 
\begin{equation}
\label{eq:coho}
\{ \chi, Z_2\} + L =0.
\end{equation}
Let us now verify that we have a good control of the $\mathscr{C}$-norm of $\chi$. First, the bounds 
$$
\forall y \geq 0, \quad |\langle y  \rangle - y | \leq 1 \quad \mathrm{and} \quad | \sqrt{y(y+1) + \mu} - y | \leq \mu + 1
$$
and the decomposition
$$
\langle \sum_{j=1}^n \sigma_j \ell_j  \rangle  = \Big ( \langle \sum_{j=1}^n \sigma_j \ell_j  \rangle  - \sum_{j=1}^n \sigma_j \ell_j   \Big) +  \sum_{j=1}^n \sigma_j (\ell_j - \omega_{k_j} ) + \Omega(\sigma,\mathbf{k}),
$$
where $k_j = (\ell_j,m_j)$ for all $j\in \llbracket 1,n\rrbracket $, provide the estimate
$$
\langle \sigma_1 \ell_1 + \cdots +  \sigma_n \ell_n  \rangle \leq (n+1)(\mu+1) + |\Omega(\sigma,\mathbf{k})|.
$$
Therefore,  as a consequence of \eqref{eq:lowboundOm} (since $\delta < 1$) we have the bound
$$
|\chi_{\mathbf{k}}^\sigma| \leq (n+2)(\mu +1) \delta^{-1}  \frac{|L_{\mathbf{k}}^\sigma|}{\langle \sigma_1 \ell_1 + \cdots +  \sigma_n \ell_n  \rangle}
$$
and so
$$
\|\chi\|_{\mathscr{C}} \lesssim_{n,\mu} \delta^{-1}\| L \|_{\mathscr{H}} \lesssim_{n,\mu} \delta^{-1} \| Q^{(n)} \|_{\mathscr{H}} \lesssim_{n,\mu} \delta^{-1} C B^\beta.
$$

\medskip

\noindent \underline{$\star$ \emph{The new variables}.} As usual, we have to compose the change of variables $\tau$ at step $n$ with the Hamiltonian flow of $\chi$ (see \eqref{eq:mozart} below). Since they are only defined locally, we have to pay attention to their domains of definition. Eventhough the overall strategy is clear, it is a little bit tedious to check.

Since $\|\chi\|_{\mathscr{C}}  \lesssim_{n} \delta^{-1} C B^\beta$ and $\gamma N^{-\alpha}=:\delta$, applying Proposition \ref{prop_ham_flow}, we get a constant $K>0$ depending only on $(n,C, \mu)$, an exponent $b>0$ depending only on $(n,\beta)$ such that setting $\varepsilon_1 = (K  B^{b})^{-1/(n-2)}$, $\chi$ generates a smooth map 
$$
\Phi_\chi : \left\{ \begin{array}{cll} [-1,1] \times B_{h^{1/2}(\mathbb{C}^{\mathcal{T}_M})}(0,\varepsilon_1) &\to& \mathbb{C}^{\mathcal{T}_M} \\ (t,u) &\mapsto& \Phi_\chi^t(u) \end{array} \right.
$$
solving the equation 
$
-i\partial_t \Phi_\chi = ( \nabla \chi)\circ \Phi_\chi,$
and such that for all $t\in [-1,1]$, $\Phi_\chi^t$ is symplectic, close to the identity
\begin{equation}
\label{eq_dindon_new}
\| u \|_{h^{1/2}} < \varepsilon_1  \quad \Rightarrow \quad \| \Phi_\chi^t u - u \|_{h^{1/2}} \leq  \left( \frac{\| u \|_{h^{1/2}}}{\varepsilon_1}\right)^{n-2} \| u \|_{h^{1/2}},
\end{equation}
invertible
\begin{equation}
\label{eq_invertibility_new}
\|\Phi_\chi^{-t} (u) \|_{h^{1/2}} < \varepsilon_1 \quad \Rightarrow \quad  \Phi_\chi^{t}\circ  \Phi_\chi^{-t} (u) = u.
\end{equation}
Moreover, the map $u \mapsto \mathrm{d} \Phi_\chi^t(u) $ is continuous and we have the estimates
\begin{equation}
\label{eq_petanque_new}
\| u \|_{h^{1/2}} < \varepsilon_1  \quad \Rightarrow \quad  \| \mathrm{d} \Phi_\chi^t (u)\|_{\mathscr{L}(h^{1/2})}\leq 2 \quad \mathrm{and} \quad  \| \mathrm{d} \Phi_\chi^t (u)\|_{\mathscr{L}(h^{-1/2})}\leq 2.
\end{equation}
As usual, we aim at defining, for a proper choice of $\varepsilon_2^\sharp$,
\begin{equation}
\label{eq:mozart}
\tau^{(1)}_\sharp := \tau^{(1)} \circ \Phi_\chi^1\quad \mathrm{on} \quad B_{h^{1/2}}(0,2 \varepsilon_2^\sharp)\quad \mathrm{and} \quad \tau^{(0)}_\sharp := \Phi_\chi^{-1}\circ \tau^{(0)}\quad \mathrm{on} \quad B_{h^{1/2}}(0, \varepsilon_2^\sharp).
\end{equation}
To ensure that such a definition makes sense, we have to choose $\varepsilon_2^\sharp$ in such a way that
\begin{equation}
\label{eq:relou1}
2\varepsilon_2^\sharp \leq \varepsilon_1 \quad \mathrm{and} \quad (\| u\|_{h^{1/2}} < 2\varepsilon_2^\sharp \quad \Rightarrow \quad  \| \Phi_\chi^1(u) \|_{h^{1/2}} < 2 \varepsilon_2).
\end{equation}
\begin{equation}
\label{eq:relou2}
\varepsilon_2^\sharp \leq \varepsilon_2  \quad \mathrm{and} \quad (\| u\|_{h^{1/2}} < \varepsilon_2^\sharp \quad \Rightarrow \quad  \| \tau^{(0)}(u) \|_{h^{1/2}} <  \varepsilon_1).
\end{equation}
Let us analyze these conditions. First, we focus on \eqref{eq:relou1}. Provided that $\| u\|_{h^{1/2}} < 2\varepsilon_2^\sharp \leq \varepsilon_1$, since $ \Phi_\chi^1$ is close to the identity (see \eqref{eq_dindon_new}), we have $\Phi_\chi^1(u) \leq 2 \| u\|_{h^{1/2}} < 4 \varepsilon_2^\sharp$. Therefore, to get \eqref{eq:relou1} it is enough to have $2\varepsilon_2^\sharp \leq \min(\varepsilon_2,\varepsilon_1)$. Similarly, since $\tau^{(0)}$ is close to the identity (see \eqref{eq:taucloseid}), to get \eqref{eq:relou2} it is enough to ensure that $2\varepsilon_2^\sharp \leq \varepsilon_1$ and $\varepsilon_2^\sharp \leq \varepsilon_2$.

Before fixing $\varepsilon_2^\sharp$, let us only assume that $2\varepsilon_2^\sharp \leq \min(\varepsilon_2,\varepsilon_1)$ and investigate which conditions $\varepsilon_2^\sharp$ has to satisfy to ensure that $\tau^{(1)}_\sharp$ and $\tau^{(0)}_\sharp $ enjoy the properties described in Theorem \ref{thm_NF} (close to the identity, invertible...).

First, let us note that $\tau^{(1)}_\sharp$ and $\tau^{(0)}_\sharp$ are obviously symplectic and their differentials enjoy the bounds \eqref{eq:redtau} thanks to~\eqref{eq_petanque_new} (with $n \rightarrow n+1$). Hence, it remains to prove that $\tau^{(0)}_\sharp$ and $\tau^{(1)}_\sharp$ are close to the identity in the sense of~\eqref{eq:taucloseid}. To that aim, if $\| u \|_{h^{1/2}}< \varepsilon_2^\sharp$, since both  $\Phi_\chi^{-1}$ and $\tau^{(0)} $ are close to the identity, then we have
\begin{align*}
\|\tau^{(0)}_\sharp(u)-u\|_{h^{1/2}}  &\leq \left( \frac{\| \tau^{(0)}(u) \|_{h^{1/2}}}{\varepsilon_1}\right)^{n-2}  \|  \tau^{(0)}(u) \|_{h^{1/2}} + \left( \frac{\| u \|_{h^{1/2}}}{\varepsilon_2}\right)^{p-2} \|  u \|_{h^{1/2}} \\
&\leq \left( \frac{2\| u \|_{h^{1/2}}}{\varepsilon_1}\right)^{n-2} 2 \| u \|_{h^{1/2}} + \left( \frac{\| u \|_{h^{1/2}}}{\varepsilon_2}\right)^{p-2} \|  u \|_{h^{1/2}}.
\end{align*}
Therefore, since $n\geq p$ and $2\| u \|_{h^{1/2}}< 2\varepsilon_2^\sharp \leq \varepsilon_1 $, we deduce that
$$
\|\tau^{(0)}_\sharp(u)-u\|_{h^{1/2}} \leq \left( \frac{\| u \|_{h^{1/2}}}{\varepsilon_2^\sharp}\right)^{p-2} \|  u \|_{h^{1/2}}  \left[ \frac{2 (\varepsilon_2^\sharp)^{p-2}}{  \varepsilon_1^{p-2}} +  \frac{ (\varepsilon_2^\sharp)^{p-2} }{ \varepsilon_2^{p-2} } \right].
$$ 

As a consequence, since $p \geq 3$, if $3\varepsilon_2^\sharp \leq \min(\varepsilon_2,\varepsilon_1)$, we deduce that both $(\varepsilon_2^\sharp)^{p-2}/ \varepsilon_1^{p-2}$ and $(\varepsilon_2^\sharp)^{p-2}/ \varepsilon_2^{p-2}$ are bound by $1/3$ and so that $\tau^{(0)}_\sharp$ is close to the identity. It can be proven, with a similar decomposition, that if $6\varepsilon_2^\sharp \leq \min(\varepsilon_2,\varepsilon_1)$ then $\tau^{(1)}_\sharp$ is also close to the identity.

Finally, we also note that if $\tau^{(0)}_\sharp$ is close to the identity then it takes values in $B_{h^{1/2}}(0,2\varepsilon_2^\sharp )$.  Thus, as $\Phi_\chi^1$ is invertible (see \eqref{eq_invertibility_new}), the diagram \eqref{mon_beau_diagram_roi_des_forets} associated with $\tau^{(0)}_\sharp$ and $\tau^{(1)}_\sharp$ commutes.

To conclude this paragraph, we fix $\varepsilon_2^\sharp$ as large as possible to get all the properties of $\tau^{(0)}_\sharp$ and $\tau^{(1)}_\sharp$ , i.e.
 $$
 \varepsilon_2^\sharp = \frac16 \min(\varepsilon_2,\varepsilon_1).
 $$
We note that, therefore, we have $\varepsilon_2^\sharp \geq \frac16 \min( (K  B^{b})^{-1/(n-2)} , (C B^{\beta}))^{-1} ) \geq (C_\sharp B^{\beta_\sharp})^{-1}$ provided that $C_\sharp \geq 6 \max( K^{1/(n-2)},C)$ and $\beta_\sharp \geq \max(b/(n-2),\beta)$ (these constant will be determined at the end of the proof).

\medskip

\noindent \underline{$\star$ \emph{The new Hamiltonian.}} We aim at describing the Taylor expansion of $H\circ \tau^{(1)}_\sharp$. Since $t\mapsto \Phi_\chi^t$ is a smooth function solving the equation $-i\partial_t \Phi_\chi = ( \nabla \chi)\circ \Phi_\chi$, realizing a Taylor expansion in $t=0$ (on $B_{h^{1/2}}(0,2 \varepsilon_2^\sharp)$) gives
\begin{multline*}
H\circ \tau^{(1)}_\sharp = H\circ \tau^{(1)} \circ \Phi_\chi^1 
= Z_2 \circ  \Phi_\chi^1  + \sum_{j=p}^{r+p-1} Q^{(j)} \circ  \Phi_\chi^1    + R \circ \Phi_\chi^1 \\
= Z_2 + \sum_{j=p}^{r+p-1} Q^{(j)} + \{\chi,Z_2\}   + \sum_{h=1}^{m_{n}} \frac1{(h+1) !} \mathrm{ad}_{\chi}^{h+1} Z_2 + \sum_{j=p}^{r+p-1} \sum_{h=1}^{m_{j}} \frac1{h !} \mathrm{ad}_{\chi}^h Q^{(j)} + R \circ \Phi_\chi^1 \\
+  \int_0^1 \Big(\frac{(1-t)^{m_{n} +1 }}{(m_{n} +1) !}  (\mathrm{ad}_{\chi}^{m_{n} +2} Z_2)\circ \Phi_\chi^t  +   \sum_{j=p}^{r+p-1} \frac{(1-t)^{m_j}}{m_j !}  (\mathrm{ad}_{\chi}^{m_j+1} Q^{(j)}) \circ \Phi_\chi^t\Big)  \  \mathrm{d}t 
\end{multline*}
where $m_j$ denotes the largest integer such that $j+m_j (n -2) < r+p$ and $\mathrm{ad}_\chi := \{\chi,\cdot\}.$ 

In order to pool these terms by packets, we recall that by construction $\{\chi,Z_2\} = -L$ is of order $n$, that $\chi \in \mathscr{H}_{M}^{n}$ is of degree $n$ and that the Poisson bracket of two homogeneous polynomials of degree $r_1$ and $r_2$ is of degree $r_1+r_2-2$. Therefore we set
$$
Q^{(j)}_\sharp = Q^{(j)} \quad \mathrm{if} \quad j < n, \quad Q^{(n)}_\sharp = Q^{(n)}+  \{\chi,Z_2\} =  Q^{(n)} - L =U,
$$
$$
Q^{(j)}_\sharp =  \sum_{ j_\star+h(n - 2)= j} \frac1{h !} \mathrm{ad}_{\chi}^h Q^{(j_\star)} - \sum_{ n+h(n - 2)= j} \frac1{(h+1) !} \mathrm{ad}_{\chi}^h L  \quad \mathrm{if} \quad j > n,
$$
$$
R_\sharp=  R \circ \Phi_\chi^1 
-  \int_0^1 \Big(\frac{(1-t)^{m_{n} +1 }}{(m_{n} +1) !}  (\mathrm{ad}_{\chi}^{m_{n} +1} L)\circ \Phi_\chi^t  +   \sum_{j=p}^{r+p-1} \frac{(1-t)^{m_j}}{m_j !}  (\mathrm{ad}_{\chi}^{m_j+1} Q^{(j)}) \circ \Phi_\chi^t\Big)  \  \mathrm{d}t,
$$
where $h$ and $j_\star$ are the indices on which the sums hold in the definition of $Q^{(j)}_\sharp$.\\

 If  $j\leq n$, $Q^{(j)}_\sharp \in \mathscr{H}_{M}^{j}$ commutes with the low super-actions\footnote{Note that $U$ has been designed to get this property.} and we have
$$
\| Q^{(j)}_\sharp \|_{\mathscr{H}} \leq \| Q^{(j)} \|_{\mathscr{H}} \leq C B^{\beta}.
$$
If $j>n$, we have  $Q^{(j)}_\sharp \in \mathscr{H}_{M}^{j}$ and we apply Proposition \ref{poisson} to estimate its norm. Indeed if $j_\star+h(n - 2)= j$, we can use our estimate on $ \| \chi\|_{\mathscr{C}}$ to derive that
\begin{align*}
 \| \mathrm{ad}_{\chi}^h Q^{(j_\star)} \|_{\mathscr{H}} \lesssim_r (\log M)^{h} \| \chi\|_{\mathscr{C}}^{h} \| Q^{(j_\star)} \|_{\mathscr{H}} &\lesssim_r (\gamma^{-1} N^{\alpha}  \log M)^h (C B^\beta)^{h+1} \\ &\lesssim_r \gamma^{-h} C^{h+1} B^{h (\alpha+1) + (h+1) \beta}
\end{align*}
Similarly, $L$ enjoying the same bound as $Q^{(n)}$
, if 
$n+h(n - 2)= j$, we have, $ \| \mathrm{ad}_{\chi}^h L
\|_{\mathscr{H}} \lesssim_r  \gamma^{-h} C^{h+1} B^{h (\alpha+1) + (h+1) \beta}$. As a consequence, since $h\leq r+p$, provided that $C_\sharp \gtrsim_r  \gamma^{-r-p} C^{r+p+1} $ and $\beta_\sharp \geq (r+p) (\alpha+1) + (r+p+1) \beta$, we have 
$\| Q^{(j)}_\sharp \|_{\mathscr{H}} \leq C_\sharp B^{\beta_\sharp}$ for $j>n$.

\medskip

\noindent \underline{$\star$ \emph{Control of the remainder term.}} Now we are left with controlling $\nabla R_\sharp$ in $h^{-1/2}$. We fix $u \in \mathbb{C}^{\mathcal{T}_M}$ such that $\|u\|_{h^{1/2}} < 2 \varepsilon_2^\sharp$. First we focus on $R \circ \Phi_\chi^1(u)$. By composition, we have
$$
\nabla (R \circ \Phi_\chi^1)(u) = (\mathrm{d}\Phi_\chi^1(u))^* (\nabla R) \circ \Phi_\chi^1(u).
$$
where $ (\mathrm{d}\Phi_\chi^1(u))^* \in \mathscr{L}(\mathbb{C}^{\mathcal{T}_M})$ denotes the adjoint of $\mathrm{d}\Phi_\chi^1(u)$. Moreover, by duality, we have $\|(\mathrm{d}\Phi_\chi^1(u))^* \|_{\mathscr{L}(h^{1/2})} = \|\mathrm{d}\Phi_\chi^1(u) \|_{\mathscr{L}(h^{-1/2})} \leq 2$ . Therefore, since $\|\nabla R(u)\|_{h^{-1/2}} \leq C B^{\beta} \|u\|_{h^{1/2}}^{r+p-1}$ and $\| \Phi_\chi^1(u) \|_{h^{1/2}} \leq 2 \| u\|_{h^{1/2}}$, we have
$$
\| \nabla (R \circ \Phi_\chi^1)(u) \|_{h^{-1/2}} \leq 2^{r+p} C B^\beta \| u\|_{h^{1/2}}^{r+p-1}.
$$

Now, we focus on $(\mathrm{ad}_{\chi}^{m_j+1} Q^{(j)}) \circ \Phi_\chi^t(u) $ where $p\leq j \leq r+p-1$ and $t\in [0,1]$. Arguing as above and using Proposition \ref{poisson} to estimate the norm of the Poisson brackets and Proposition \ref{grad-} to estimate the norm of the gradient, we have
\begin{equation*}
\begin{split}
\| \nabla ((\mathrm{ad}_{\chi}^{m_j+1} Q^{(j)}) \circ \Phi_\chi^t)(u) \|_{h^{-1/2}} &\leq 2 \|  (\nabla (\mathrm{ad}_{\chi}^{m_j+1} Q^{(j)}))\circ \Phi_\chi^t (u)  \|_{h^{-1/2}} \\
&\lesssim_{r,\mu}  (\delta^{-1}  \log M)^{m_j+1} (C B^\beta)^{m_j+2} (\log M)^{r_j/2}\|  \Phi_\chi^t (u)  \|^{r_j-1}_{h^{1/2}}.
\end{split}
\end{equation*}
where $r_j = j+(m_j+1)(n - 2) \in \llbracket r+p,2(r+p)\rrbracket$ (by definition of $m_j$). Thus, provided that 
$$
C_\sharp \gtrsim_{r,\mu} \gamma^{-r-p-1} C^{r+p+2} \quad \mathrm{and} \quad \beta_\sharp \geq(\alpha+1) (r+p+1) + \beta (r+p+2) + r+p 
$$
we have $ \| \nabla ((\mathrm{ad}_{\chi}^{m_j+1} Q^{(j)}) \circ \Phi_\chi^t)(u) \|_{h^{-1/2}} \leq C_\sharp B^{\beta_\sharp} \|  u  \|^{r+p-1}_{h^{1/2}}$. As above, the argument works as well for the term involving $L$ as it enjoys the same norm estimate as $Q^{(n)}$.

Hence, if moreover, $\beta_\sharp \geq \beta$ and $C_\sharp \gtrsim_r C$ (to control $R \circ \Phi_\chi^1(u)$), we have
$$
\| \nabla R_\sharp(u) \|_{h^{-1/2}} \leq  C_\sharp B^{\beta_\sharp} \| u\|_{h^{1/2}}^{r+p-1}.
$$

\medskip

\noindent \underline{$\star$ \emph{Choice of $C_\sharp$ and $\beta_\sharp$.}} To conclude our induction step (and thus the proof), we just have to pick the smallest constants enjoying all the constraints (and to note that they do not depend on $B$)
$$
\beta_\sharp = (\alpha+1) (r+p+1) + \beta (r+p+2) + r+p \quad \mathrm{and} \quad C_\sharp \simeq_r  \max(\gamma^{-r+p-1} C^{r+p+2},K^{1/(n-2)}).
$$ 

\end{proof}

\section{Proofs of the main results }
\label{sec-proofs}
This final section is devoted to the proof of Theorem \ref{thm:main} and its Corollary \ref{cor:main}.

\subsection{On the global well-posedness of \eqref{eq:KG}}
\label{sub:LWP}
 In dimension $2$, the Sobolev norm $H^1$ controls all the Lebesgue norms $L^q$, $2\leq q<\infty$. Therefore, a standard fixed point argument (which does not require any kind of Strichartz estimate) provides the local well-posedness of the nonlinear 
 Klein-Gordon equation~\eqref{eq:KG} on the sphere $\mathbb{S}^2$ in the energy space $H^1 \times L^2$ (see e.g. Thm 6.2.2 page 83 of \cite{CH98}).

This nonlinear equation is Hamiltonian because it writes formally 
\begin{equation}
\label{eq:canham}
\partial_t \begin{pmatrix} \Phi \\ \partial_t \Phi  \end{pmatrix} = \begin{pmatrix} 0& 1  \\
-1 & 0 \end{pmatrix} \nabla \mathcal{H} (\Phi ,\partial_t \Phi )
\end{equation}
where the Hamiltonian $\mathcal{H}$ is given by \eqref{eq:hamKg}. Therefore, $\mathcal{H}$ is a constant of the motion of \eqref{eq:KG} (see e.g. Prop 6.2.3 page 83 of \cite{CH98}). It is especially useful since, as stated in the following lemma, it is uniformly elliptic in a neighborhood of the origin:
\begin{lemma}
\label{lem:ell}
For all $g\in L^\infty(\mathbb{S}^2;\mathbb{R})$ and all $\mu >0$, there exist $C>1$ and $\varepsilon_0>0$ such that for all $(\Phi,\Psi) \in H^1 \times L^2(\mathbb{S}^2;\mathbb{R})$, provided that $\| \Phi \|_{H^1} + \|\Psi\|_{L^2} \leq \varepsilon_0$, we have
$$
C^{-1}(\| \Phi \|_{H^1} + \|\Psi\|_{L^2} )^2 \leq \mathcal{H}(\Phi,\Psi) \leq C (\| \Phi \|_{H^1} + \|\Psi\|_{L^2} )^2.
$$ 
\end{lemma}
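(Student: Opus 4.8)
The plan is to split $\mathcal{H}$ into its quadratic part and the nonlinear remainder, and to absorb the latter into the former once $\varepsilon_0$ is chosen small. Write $\mathcal{H} = \mathcal{H}_2 + \mathcal{H}_{\mathrm{nl}}$ with
$$
\mathcal{H}_2(\Phi,\Psi) = \frac12\int_{\mathbb{S}^2}\big(|\nabla\Phi(x)|^2 + \mu\,\Phi(x)^2 + \Psi(x)^2\big)\,\mathrm{d}\mathrm{vol}_{\mathbb{S}^2}(x),\qquad \mathcal{H}_{\mathrm{nl}}(\Phi) = -\frac1p\int_{\mathbb{S}^2} g(x)\,\Phi(x)^p\,\mathrm{d}\mathrm{vol}_{\mathbb{S}^2}(x).
$$

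First I would observe that $\mathcal{H}_2$ is comparable to the square of the energy norm. Indeed, $\int_{\mathbb{S}^2}|\nabla\Phi|^2 + \mu\Phi^2$ is, up to the multiplicative constants $\min(1,\mu)$ and $\max(1,\mu)$, the square of the standard $H^1$ norm, while $\int_{\mathbb{S}^2}\Psi^2 = \|\Psi\|_{L^2}^2$; combining this with the elementary equivalence $a^2 + b^2 \approx (a+b)^2$ gives two constants $0 < c_\mu \leq C_\mu$ with $c_\mu(\|\Phi\|_{H^1}+\|\Psi\|_{L^2})^2 \leq \mathcal{H}_2(\Phi,\Psi) \leq C_\mu(\|\Phi\|_{H^1}+\|\Psi\|_{L^2})^2$ for all $(\Phi,\Psi)\in H^1\times L^2$. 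Next I would bound the nonlinear term using the two-dimensional Sobolev embedding: since $\dim\mathbb{S}^2 = 2$, one has $H^1(\mathbb{S}^2)\hookrightarrow L^p(\mathbb{S}^2)$ for every finite $p$, so $|\mathcal{H}_{\mathrm{nl}}(\Phi)| \leq \frac{\|g\|_{L^\infty}}{p}\|\Phi\|_{L^p}^p \leq C_{g,p}\|\Phi\|_{H^1}^p$ for some constant $C_{g,p}>0$. Because $p\geq 3$, this is a higher-order quantity: whenever $\|\Phi\|_{H^1} + \|\Psi\|_{L^2} \leq \varepsilon_0$ we have $\|\Phi\|_{H^1}^p \leq \varepsilon_0^{\,p-2}\|\Phi\|_{H^1}^2 \leq \varepsilon_0^{\,p-2}(\|\Phi\|_{H^1}+\|\Psi\|_{L^2})^2$.

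Finally I would choose $\varepsilon_0$ so small that $C_{g,p}\,\varepsilon_0^{\,p-2} \leq c_\mu/2$. Then, on the ball $\{\|\Phi\|_{H^1}+\|\Psi\|_{L^2}\leq\varepsilon_0\}$, the triangle inequality $\mathcal{H}_2 - |\mathcal{H}_{\mathrm{nl}}| \leq \mathcal{H} \leq \mathcal{H}_2 + |\mathcal{H}_{\mathrm{nl}}|$ yields
$$
\frac{c_\mu}{2}\,(\|\Phi\|_{H^1}+\|\Psi\|_{L^2})^2 \;\leq\; \mathcal{H}(\Phi,\Psi) \;\leq\; \Big(C_\mu + \frac{c_\mu}{2}\Big)\,(\|\Phi\|_{H^1}+\|\Psi\|_{L^2})^2,
$$
so the statement holds with $C := \max\{2/c_\mu,\; C_\mu + c_\mu/2\}$. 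There is no genuine obstacle in this argument; the only point requiring (mild) care is the invocation of the $2$-dimensional Sobolev embedding — which fails into $L^\infty$ but holds into every $L^p$ with $p<\infty$, and here $p$ is a fixed finite integer — together with keeping track of the $\mu$-dependence of the ellipticity constants of the quadratic form.
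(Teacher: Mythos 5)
Your argument is correct and is precisely the (unwritten) content behind the paper's one-line proof, which simply cites the Sobolev embedding $H^1\hookrightarrow L^p$ on $\mathbb{S}^2$ and the hypothesis $p\geq 3$. You have spelled out the same absorption argument — quadratic part equivalent to the energy norm, nonlinear part of order $\varepsilon_0^{p-2}$ smaller — with the constants tracked carefully.
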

\begin{proof} It follows directly from the Sobolev embedding $H^1 \hookrightarrow L^p$ and from the fact that $p\geq 3$.
\end{proof}
As a consequence, as stated in the following proposition we get the global well-posedness of \eqref{eq:KG} in a neighborhood of the origin in $H^1 \times L^2$ (see e.g. Prop 6.3.3 page 84 of \cite{CH98}).
\begin{proposition} \label{prop:GWP} For all $\mu>0$ and all $g\in L^\infty$, there exist $\varepsilon_1>0$ and $K>1$ such that, as soon as $\varepsilon := \| \Phi^{(0)} \|_{H^1} + \|\dot{\Phi}^{(0)}\|_{L^2} \leq \varepsilon_1$, there exists a unique $\Phi \in C^0(\mathbb{R};H^1) \cap C^1(\mathbb{R};L^2) \cap C^2(\mathbb{R};H^{-1})$ solution to \eqref{eq:KG}.
Moreover, it enjoys the bound
$$
\forall t\in \mathbb{R}, \quad \ \| \Phi(t) \|_{H^1} + \| \partial_t \Phi(t) \|_{L^2} \leq K \varepsilon.
$$
\end{proposition}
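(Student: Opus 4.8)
The plan is to combine the local Cauchy theory recalled above with the conservation of $\mathcal{H}$ and the coercivity of Lemma~\ref{lem:ell} in a bootstrap argument. First I would invoke the local well-posedness in the energy space coming from the fixed point argument of Subsection~\ref{sub:LWP} (see e.g.~\cite[Thm~6.2.2, Prop~6.3.3]{CH98}): there is a maximal existence time $T^\ast=T^\ast(\Phi^{(0)},\dot\Phi^{(0)})\in(0,+\infty]$ and a unique solution $\Phi\in C^0([0,T^\ast);H^1)\cap C^1([0,T^\ast);L^2)$, with the blow-up alternative that if $T^\ast<+\infty$ then $\|\Phi(t)\|_{H^1}+\|\partial_t\Phi(t)\|_{L^2}\to+\infty$ as $t\to T^\ast$. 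Along this solution the Hamiltonian $\mathcal{H}$ from~\eqref{eq:hamKg} is conserved (see e.g.~\cite[Prop.~6.2.3]{CH98}).

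Let $C>1$ and $\varepsilon_0>0$ be the constants of Lemma~\ref{lem:ell}, and set $\varepsilon_1:=\varepsilon_0/(2C)$ and $K:=C$. Assume $\varepsilon:=\|\Phi^{(0)}\|_{H^1}+\|\dot\Phi^{(0)}\|_{L^2}\leq\varepsilon_1$ and consider
$$
I:=\big\{\,t\in[0,T^\ast)\ :\ \forall s\in[0,t],\ \|\Phi(s)\|_{H^1}+\|\partial_s\Phi(s)\|_{L^2}\leq\varepsilon_0\,\big\}.
$$
This is a nonempty (it contains $0$), closed subinterval of $[0,T^\ast)$ by continuity of $s\mapsto(\Phi(s),\partial_s\Phi(s))$ in $H^1\times L^2$. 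For $t\in I$, Lemma~\ref{lem:ell} applies at every $s\leq t$, so that, using conservation of $\mathcal{H}$ together with the lower bound of Lemma~\ref{lem:ell} at time $t$ and its upper bound at time $0$,
$$
(\|\Phi(t)\|_{H^1}+\|\partial_t\Phi(t)\|_{L^2})^2\leq C\,\mathcal{H}(\Phi(t),\partial_t\Phi(t))=C\,\mathcal{H}(\Phi^{(0)},\dot\Phi^{(0)})\leq C^2\varepsilon^2,
$$
hence $\|\Phi(t)\|_{H^1}+\|\partial_t\Phi(t)\|_{L^2}\leq C\varepsilon\leq\varepsilon_0/2<\varepsilon_0$. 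Since this bound is strict, a continuity argument shows that $I$ is also open in $[0,T^\ast)$, so $I=[0,T^\ast)$ and the solution stays in the ball of radius $\varepsilon_0/2$ of $H^1\times L^2$ throughout its lifespan; the blow-up alternative then forces $T^\ast=+\infty$. The same reasoning applies for negative times, which yields the global solution together with the announced bound with $K=C$, and global uniqueness follows from local uniqueness by the usual connectedness argument.

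It remains to upgrade the regularity to $\Phi\in C^2(\mathbb{R};H^{-1})$. Reading off~\eqref{eq:KG}, one has $\partial_t^2\Phi=\Delta\Phi-\mu\Phi+g\,\Phi^{p-1}$, where $\Delta\Phi-\mu\Phi\in C^0(\mathbb{R};H^{-1})$ since $\Phi\in C^0(\mathbb{R};H^1)$, and $g\,\Phi^{p-1}\in C^0(\mathbb{R};L^2)\hookrightarrow C^0(\mathbb{R};H^{-1})$ because $g\in L^\infty$ and, in dimension $2$, $H^1\hookrightarrow L^{2(p-1)}$ so that $\Phi\mapsto\Phi^{p-1}$ is continuous from $H^1$ to $L^2$ on bounded sets. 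Hence $\partial_t^2\Phi\in C^0(\mathbb{R};H^{-1})$, which gives $\Phi\in C^2(\mathbb{R};H^{-1})$. The only genuinely delicate point is the bootstrap: one must ensure that the a priori bound $\|\Phi(t)\|_{H^1}+\|\partial_t\Phi(t)\|_{L^2}\leq C\varepsilon$ extracted from energy conservation lies \emph{strictly} below the threshold $\varepsilon_0$ on which the coercivity of Lemma~\ref{lem:ell} relies — this is precisely what forces the choice $\varepsilon_1=\varepsilon_0/(2C)$ and makes the open–closed argument close — while everything else is the standard Cauchy theory and direct inspection of the equation.
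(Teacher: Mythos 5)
Your proof is correct and is essentially the argument the paper has in mind: the paper simply cites~\cite[Prop.~6.3.3]{CH98} after establishing the conservation of $\mathcal{H}$ and the coercivity of Lemma~\ref{lem:ell}, and your open-closed bootstrap together with the blow-up alternative is precisely the standard continuation argument that reference implements. The regularity upgrade to $C^2(\mathbb{R};H^{-1})$ via direct inspection of the equation and the Sobolev embedding $H^1\hookrightarrow L^{2(p-1)}$ is also the expected step.
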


\subsection{Proof of Theorem \ref{thm:main}} 

One more time, we fix the mass $\mu>0$ (in a set of full measure) to make the frequencies ($\omega_{(\ell,m)} = \sqrt{\ell (\ell+1) + \mu}$)  non-resonant in the sense of Proposition \ref{prop:strong_nr}. The strategy is the following. Using the above a priori estimates, we prove that the high super actions are under control as long as $N=\langle\ell\rangle\gtrsim\varepsilon^{-\frac{p-2}{\alpha_r+1}}$ for an arbitrary $\alpha_r>1$. Thus, we only have to deal with the low super-actions that we handle using the Birkhoff normal form of Theorem~\ref{thm_NF}. This requires to make a truncation of the frequency up to a certain level $M$ in order to reduce to the finite dimensional situation of this theorem. In order to ensure that all the remainder terms are small in this reduction to finite dimension, we need to take $M$ of order $\varepsilon^{-r}$. Then the conclusion follows by combining our a priori estimates on the solution with the normal form of Theorem~\ref{thm_NF} and by taking $\alpha_r$ larger than the exponent $\beta$ appearing in the remainder terms of that statement.

\medskip

\noindent \underline{ \emph{$\triangleright$ \eqref{eq:KG} as a Schr\"odinger equation.}} We consider $(\Phi^{(0)} ,\dot{\Phi}^{(0)})\in H^1 \times L^2$, satisfying $\varepsilon := \| \Phi^{(0)} \|_{H^1} + \|\dot{\Phi}^{(0)}\|_{L^2} < \varepsilon_0 \leq \varepsilon_1$ where $\varepsilon_0$ will be determined at the end of the proof and $\varepsilon_1$ is given by Proposition \ref{prop:GWP}. Thanks to this proposition, one obtains a global solution $\Phi$ to \eqref{eq:KG}. Then, in order to diagonalize the linear part of \eqref{eq:KG}, we set (as usual)
$$
u := \Lambda \Phi + i \Lambda^{-1} \partial_t\Phi \quad \mathrm{where} \quad \Lambda:=(  \mu - \Delta)^{1/4}.
$$
Indeed, $u$ belongs to $C^0(\mathbb{R};H^{1/2}) \cap C^1(\mathbb{R};H^{-1/2}) $ and solves the equation
\begin{equation}
\label{eq:schro}
i\partial_t u = \Lambda^2 u - \Lambda^{-1} \big(g \,  [ \Lambda^{-1} \Re u ]^{p-1}\big).
\end{equation}
It is relevant to note that the harmonic energies $\mathcal{E}_\ell$ (defined by \eqref{eq:def_harm_en}), that we aim at controlling in Theorem \ref{thm:main}, satisfy
$$
\forall \ell \in \mathbb{N}, \quad \mathcal{E}_{\ell} (\Phi(t)) = \| \Pi_\ell u(t) \|_{L^2}^2 := J_\ell(u(t))
$$
where $\Pi_\ell$ is the orthogonal projection on the eigenspace $E_\ell$ as defined in~\eqref{eq:lapdiag}.
Moreover, as a consequence of Proposition \ref{prop:GWP}, there exists a constant $K'>1$ depending only on $\mu$ such that
\begin{equation}
\label{eq:jecontrolelanorme}
\forall t\in \mathbb{R},  \quad \ \|u(t) \|_{H^{1/2}} \leq K' \varepsilon.
\end{equation}

\medskip

\noindent \underline{ \emph{$\triangleright$ The $N$-truncation.}} 
The control of the high super-actions is a direct consequence of the a priori bound \eqref{eq:jecontrolelanorme}. Indeed, applying the triangular inequality, we have
$$
|J_\ell(u(t)) - J_\ell(u(0)) | \leq J_\ell(u(t))   + J_\ell(u(0)) \leq 2 \langle \ell \rangle^{-1} \| u \|_{L^\infty_t H^{1/2}_x}^2 \leq 2 \langle \ell \rangle^{-1}  (K')^2 \varepsilon^2. 
$$
Being given $\alpha_r>1$ (depending only on $r$) that will be optimized at the end of the proof, we set
$$
 N^{(\max)}:= \varepsilon^{- \frac{p-2}{\alpha_r+1}} .
$$
As a consequence, for all $t\in \mathbb{R}$, we have (for all $t\in \mathbb{R}$)
\begin{equation}
\label{eq:conthighmodes}
\langle \ell \rangle \geq N^{(\max)} \quad\Rightarrow \quad |J_\ell(u(t)) - J_\ell(u(0)) | \lesssim_{r,\mu} \langle \ell \rangle^{\alpha_r} \varepsilon^p.
\end{equation}
Hence, from now on, we will only focus on the variations of the low super-actions. More precisely, we fix $\ell_\star \in \mathbb{N}$ and $N\in \mathbb{R}$ such that 
$$
N:= \langle \ell_\star  \rangle < N^{(\max)}
$$
and we aim at estimating the variations of $J_{\ell_{\star}}( u)$.


\medskip

\noindent \underline{ \emph{$\triangleright$ The $M$-truncation.}} In order to reduce ourselves to the finite dimensional situation of our Birkhoff normal form Theorem~\ref{thm_NF}, we are going to prove that the high enough modes (larger than $M\gg 1$) do not play any role in the dynamics for very long times (in $H^{-1/2}$). Let $M \geq 2N^{(\max)}$ be a constant that will be optimized later with respect to $\varepsilon$ and $\Pi_{\leq M}$ be the orthogonal projection on $\bigoplus_{ \ell \leq M } E_\ell$, i.e.
$$
\Pi_{\leq M} := \sum_{ \ell  \leq M} \Pi_\ell \quad \mathrm{and} \quad \Pi_{> M} := \mathrm{Id}_{L^2} - \Pi_{\leq M}.
$$
We set
$$
F^{(>M)}(t) := \Pi_{\leq M} [\mathcal{ N}(\Pi_{\leq M} u(t)) - \mathcal{ N}(u(t)) ] \quad \mathrm{where} \quad \mathcal{N}(u) := \Lambda^{-1} \big(g \,  [ \Lambda^{-1} \Re u ]^{p-1}\big).
$$
Since $u$ solves the equation \eqref{eq:schro}, $u^{(\leq M)} := \Pi_{\leq M} u(t)$ solves the \emph{non-autonomous} equation
\begin{equation}
\label{eq:nono}
i \partial_t u^{(\leq M)} = \Lambda^2 u^{(\leq M)}  -  \Pi_{\leq M}\mathcal{ N}(u^{(\leq M)}) + F^{(>M)}(t).
\end{equation}
We note that, since $ M \geq 2N^{(\max)} $, we have $M > \ell_\star$ and so 
\begin{equation}\label{eq:truncate-action}
 J_{\ell_\star}(u^{(\leq M)}) =  J_{\ell_\star} ( u).
\end{equation}
We aim at proving that the non-autonomous part of \eqref{eq:nono} (i.e. $F^{(>M)}(t)$) is negligible provided that $M$ is large enough. Indeed, as a consequence of the Sobolev embeddings $H^1 \hookrightarrow L^{6(p-2)}  \hookrightarrow L^{3/2} \hookrightarrow H^{-1}$, by H\"older and the mean value inequality, we have (uniformly with respect to $t$)

\begin{equation*}
\begin{split}
\| F^{(>M)} \|_{H^{-1/2}} &\lesssim_\mu \|  g\Phi^{p-1} -  g(\Pi_{\leq M} \Phi)^{p-1}  \|_{H^{-1}} \\
&\lesssim_{\mu, g} \|  \Phi^{p-1} - (\Pi_{\leq M} \Phi)^{p-1}  \|_{L^{3/2}} \\
&\lesssim_{\mu, g} \| (\Pi_{>M} \Phi ) (| \Pi_{\leq M} \Phi  |^{p-2} +|  \Phi  |^{p-2}   )  \|_{L^{3/2}} \\
&\lesssim_{\mu, g} \| \Pi_{>M} \Phi \|_{L^2} (\|(  \Pi_{\leq M} \Phi  )^{p-2} \|_{L^6} + \|   \Phi  ^{p-2} \|_{L^6})  \\
&\lesssim_{\mu, g} M^{-1} \|  \Phi \|_{H^1}^{p-1} \lesssim_{\mu,g} M^{-1} \varepsilon^{p-1}.
\end{split}
\end{equation*}
Therefore, from now, we assume that $M \geq \varepsilon^{- r},$ and we get
$$
\forall t\in \mathbb{R}, \quad \| F^{(>M)} (t)\|_{H^{-1/2}} \lesssim_{\mu} \varepsilon^{r+p-1}.
$$

\medskip

\noindent \underline{ \emph{$\triangleright$ Discretization.}} Thanks to Theorem \ref{t:proba}, we get a basis $(e_k)_{k\in \mathcal{T}_\infty}$ of $L^2$ which diagonalizes the Laplace--Beltrami operator $\Delta$ and enjoys nice algebraic properties. In particular, thanks to this basis, we identify $\bigoplus_{ \ell \leq M } E_\ell$ with $\mathbb{R}^{\mathcal{T}_M}$ (and the usual Sobolev norms with the discrete ones).

We use this basis to rewrite the autonomous part of \eqref{eq:nono} as a Hamiltonian system :
\begin{equation}
\label{eq:hamfor}
i \partial_t u^{(\leq M)} = \nabla H(u^{(\leq M)} )+ F^{(>M)}(t).
\end{equation}
where
$$
H = Z_2 + P^{(p)} \quad \mathrm{with} \quad Z_2(u) = \frac12 \sum_{k\in \mathcal{T}_M} \omega_k |u_k|^2
$$
and $P^{(p)} \in \mathscr{H}_M^p$ is defined, for all $\mathbf{k}=(k_1,\ldots,k_p) \in \mathcal{T}_M^p$ and $\sigma \in \{-1,1\}^p$ by
$$
(P^{(p)})_{\mathbf{k}}^\sigma = - \frac1{p 2^p} \left(\prod_{j=1}^p \frac1{(\ell_j (\ell_j +1)+\mu)^{1/4}} \right) \int_{\mathbb{S}^2} e_{k_1}(x) \cdots e_{k_p}(x) g(x)  \mathrm{d}\mathrm{vol}_{\mathbb{S}^2}(x).
$$
Thanks to Theorem \ref{t:proba}, the basis $(e_k)_{k\in \mathcal{T}_\infty}$ has been chosen such that
\begin{equation}
\label{eq:toutcapourca}
\| P^{(p)} \|_{\mathscr{H}} \lesssim (\log(M))^p.
\end{equation}
Note that the choice of the orthonormal basis of Theorem \ref{t:proba} is crucial here. With the standard basis of spherical harmonics we would not get such a good control on the nonlinearity.

\medskip

\noindent \underline{ \emph{$\triangleright$ Change of variables.}} Now, we apply Theorem \ref{thm_NF} (i.e. our Birkhoff normal form result) to simplify the Hamiltonian part of \eqref{eq:hamfor}. More precisely, we get some transformations $\tau^{(0)},\tau^{(1)}$, some Hamiltonians $Q_{res}^{\leq N}$ and $R$, some constants $C,\beta$ and $\varepsilon_2$ such that the statement of Theorem \ref{thm_NF} holds. We recall that $B$ is defined by $B= \max(N,\log(M))$.

We will optimize the constants in such a way that we have
$$
 K' \varepsilon < (C B^\beta)^{-1}.
$$
As a consequence, we have
$$
\forall t \in \mathbb{R}, \quad  \| u^{(\leq M)}(t) \|_{h^{1/2}} \leq  K' \varepsilon < (C B^\beta)^{-1} \leq \varepsilon_2.
$$
Therefore, it makes sense to define
$$
v := \tau^{(0)} \circ u^{(\leq M)}.
$$
Moreover since the diagram \eqref{mon_beau_diagram_roi_des_forets} commutes we have
$$
u^{(\leq M)} = \tau^{(1)}\circ v .
$$
As a consequence, since $\tau^{(0)}$ is symplectic and $(\mathrm{d}\tau^{(0)}(u^{(\leq M)}))^{-1} = \mathrm{d}\tau^{(1)}(v)$, we have
\begin{equation}
\label{eq:lader}
i\partial_t v(t) = \nabla (Z_2 + Q_{res}^{\leq N} )(v(t)) + W(t)
\end{equation}
where $W$ is the new remainder term defined by
$$
W(t) := \nabla R (v(t)) + \mathrm{d}\tau^{(0)}(u^{(\leq M)}(t)) (F^{(>M)}(t)).
$$
Let us estimate $W$. On the one hand, since $\tau^{(0)}$ is close to the identity in the sense of Theorem~\ref{thm_NF}, we have  
\begin{equation}
\label{eq:uvpetits} 
\|v(t)\|_{h^{1/2}}\leq \|u^{(\leq M)}(t)\|_{h^{1/2}} +\|v(t)-u^{(\leq M)}(t)\|_{h^{1/2}}  \leq 2\|u^{(\leq M)}(t)\|_{h^{1/2}}  \leq 2 K' \varepsilon\lesssim_{\mu} \varepsilon.
\end{equation}
Hence, thanks to Theorem~\ref{thm_NF}, we get $\| \nabla R (v(t)) \|_{h^{-1/2}} \lesssim_{r,\mu} B^\beta \varepsilon^{r+p-1}$. On the other hand, since $\mathrm{d}\tau^{(0)}(u^{(\leq M)}(t))$ is controlled in $\mathscr{L}(h^{-1/2})$ (by $2^r$), we deduce that 
$$\| \mathrm{d}\tau^{(0)}(u^{(\leq M)}(t)) (F^{(>M)}(t))\|_{h^{-1/2}} \lesssim_{r,\mu} \varepsilon^{r+p-1}.$$ Therefore, we have
\begin{equation}
\label{eq:estW}
\| W(t) \|_{h^{-1/2}} \lesssim_{r,\mu} B^\beta \varepsilon^{r+p-1}.
\end{equation}
Finally, let us note that, since $\tau^{(0)}$ is close to the identity in the sense of Theorem~\ref{thm_NF} and $(C B^\beta)^{-1} \leq \varepsilon_2$, we have
\begin{equation}
\label{eq:uaimev}
\|u^{(\leq M)}(t) -v(t) \|_{h^{1/2}} \lesssim_{r,\mu} \varepsilon^{p-1} B^{\beta(p-2)}.
\end{equation}

\medskip

\noindent \underline{ \emph{$\triangleright$ Control of the low super-actions.}}  As a consequence of~\eqref{eq:truncate-action}, \eqref{eq:uaimev} and \eqref{eq:uvpetits}, we have
$$
|J_{\ell_\star } ( u(t)) - J_{\ell_\star } ( v(t)) | \leq \|  u^{(\leq M)}(t) - v(t)   \|_{\ell^2} (\|  u^{(\leq M)}(t) \|_{\ell^2} + \| v(t)   \|_{\ell^2}) \lesssim_{r,\mu}  \varepsilon^{p} B^{\beta(p-2)}.
$$
Hence, by the triangular inequality, we have
$$
|J_{\ell_\star } ( u(t)) - J_{\ell_\star } ( u(0)) | \lesssim_r   |J_{\ell_\star } ( v(t)) - J_{\ell_\star } ( v(0)) | + \varepsilon^{p} B^{\beta(p-2)} . 
$$
However, since $v$ solves \eqref{eq:lader}, we have
$$
\partial_t J_{\ell_\star } ( v(t)) = \{  J_{\ell_\star } , Z_2 + Q_{res}^{\leq N} \}(v(t)) + ( i\nabla J_{\ell_\star }(v(t)) ,W(t)  )_{\ell^2}.
$$
By construction, since $\langle \ell_\star \rangle = N$,  $Z_2 + Q_{res}^{\leq N}$ and $J_{\ell_\star }$ commute, i.e. $\{  J_{\ell_\star } , Z_2 + Q_{res}^{\leq N} \} = 0.$
 As a consequence, using the estimate \eqref{eq:estW} on $W$, we have
\begin{equation*}
\begin{split}
|\partial_t J_{\ell_\star } ( v(t))  | \leq |( i\nabla J_{\ell_\star }(v(t)) ,W(t)  )_{\ell^2}|\leq \| \nabla J_{\ell_\star }(v(t)) \|_{h^{1/2}} \|W(t)\|_{h^{-1/2}} &\leq 2\| v(t) \|_{h^{1/2}}  \|W(t)\|_{h^{-1/2}} \\
&\lesssim_{r,\mu} B^\beta \varepsilon^{r+p}.
\end{split}
\end{equation*}
Consequently, while $|t|\leq \varepsilon^{-r}$, we have
\begin{equation}
\label{eq:contlowmodes}
|J_{\ell_\star } ( u(t)) - J_{\ell_\star }( u(0)) | \lesssim_{r,\mu} \varepsilon^{p} B^{\beta(p-2)} \lesssim_{r,\mu,\nu} \langle \ell_\star \rangle ^{\alpha_r} \varepsilon^{ p -\nu}
\end{equation}
provided that $B^{\beta(p-2)} \lesssim_{r,\mu,\nu}   N^{\alpha_r} \varepsilon^{-\nu}$ where $\nu>0$.

\medskip

\noindent \underline{ \emph{$\triangleright$ Conclusion.}} As we wanted, in \eqref{eq:conthighmodes} and \eqref{eq:contlowmodes}, we have controlled the variations of the super-actions. Nevertheless, to get these results we have done some assumptions on our parameters. Hence, to conclude, we have to check their compatibility and optimize them. 

More precisely, we have to prove that their exists $\alpha_r>1$ and $\varepsilon_0 \leq \varepsilon_1$ such that for all $\varepsilon < \varepsilon_0$ and all $N < N^{(\max)} =  \varepsilon^{- \frac{p-2}{\alpha_r+1}}$, there exists $M \geq 2$ satisfying
$$
\begin{array}{lcllll}
& (i) & B^{\beta(p-2)}\lesssim_{r,\mu,\nu}  N^{\alpha_r}\varepsilon^{-\nu} \quad  \quad & (ii) & K' \varepsilon < (C B^\beta)^{-1} \\
& (iii) &  M \geq  \varepsilon^{- r} & (iv) &   M \geq 2 N^{(\max)}
\end{array}
$$
where $B= \max(N,\log(M))$. First, we set $M =   \varepsilon^{- r} $ (so $(iii)$ is satisfied).
Then, we set $\alpha_r = \beta(p-2)$ and we note that the estimate $(i)$ holds. Finally, since $p\leq r$, we note that $(ii)$ and $(iv)$ are clearly satisfied provided that $\varepsilon_0$ is small enough.

\subsection{Proof of Corollary \ref{cor:main}} 
For all $t\in \mathbb{R}$, let $w(t) \in H^{1/2}(\mathbb{S}^2;\mathbb{C})$ be defined, for all $\ell \in \mathbb{N}$, by
$$
\Pi_\ell w(t) = \sqrt{ \frac{ J_\ell(u(0))}{ J_\ell(u(t))} } \, \Pi_\ell u(t)   \quad \mathrm{if} \quad J_\ell(u(t)) \neq 0 \quad  \mathrm{and} \quad \Pi_\ell w(t) = \Pi_\ell u(0) \quad \mathrm{else}.
$$
Indeed, recalling that $J_\ell = \|\Pi_\ell \cdot \|_{L^2}^2$, this function satisfies $\| w(t) \|_{H^{1/2}} = \| u(0) \|_{H^{1/2}} $ and
$$
\forall \ell \in \mathbb{N}, \quad J_\ell (w(t)) =J_\ell (u(0)) \quad \mathrm{and} \quad \sqrt{J_\ell(w(t) - u(t))} = |\sqrt{J_\ell(u(t))} - \sqrt{J_\ell(w(t))}|.
$$
As a consequence, applying Theorem \ref{thm:main} (with $\nu = 1/2$), while $|t|<\varepsilon^{-r}$, for all $\ell \in \mathbb{N}$, we have
$$
J_\ell(u(t) -w(t)) \leq |J_\ell(u(t)) -J_\ell(w(t))| = |J_\ell(u(t)) -J_\ell(u(0))| \lesssim_{\mu,r} \langle \ell \rangle^{\alpha_r} \varepsilon^{p-1/2}.
$$
Therefore, we have
$$
\| u(t) - w(t) \|_{H^{-\alpha_r/2}} \lesssim_{\mu,r}  \varepsilon^{(2p-1)/4}.
$$
Consequently, since $s<1/2$, setting $\theta = \max( 1, \frac{1-2s}{1+ \alpha_r}) $, by interpolation and using Proposition~\ref{prop:GWP}, we get
$$
\| u(t) - w(t) \|_{H^{s}} \lesssim_{r,s} \| u(t) - w(t) \|_{H^{1/2}}^{1-\theta} \| u(t) - w(t) \|_{H^{-\alpha_r/2}}^{\theta} \lesssim_{r,s,\mu} \varepsilon^{ 1 + \delta }
$$
where $\delta :=   \theta ((2p-1)/4 - 1) >0$ (because $p\geq 3$). 
Finally, to see that there exist some Hermitian operators $H_\ell(t) : E_\ell \otimes \mathbb{C} \to E_\ell \otimes \mathbb{C}$ such that
$$
\forall \ell \in \mathbb{N}, \quad \Pi_\ell w(t) = e^{i H_\ell(t)} \Pi_\ell u(0). 
$$
it is enough to note that the unitary group of $E_\ell \otimes \mathbb{C}$ acts transitively on the spheres  and that every unitary transform is the exponential of a skew-Hermitian operators (indeed, since $J_\ell (w(t)) =J_\ell (u(0))$, $\Pi_\ell w(t)$  and $\Pi_\ell u(0)$ belong to a same sphere).

\end{document}